\newcommand{\R}{\mathbb R}
\newcommand{\N}{\mathbb N}
\newcommand{\F}{\mathcal F}
\newcommand{\Pv}{\mathbb P}
 \renewcommand{\headrulewidth}{0pt}
 \renewcommand{\footrulewidth}{0.5pt}
 \definecolor{myaqua}{rgb}{0.0,0.5,0.55}
 \definecolor{lightaqua}{rgb}{0.75,0.95,0.95}
\newtheorem{theorem}{Theorem}
\newtheorem{prop}[theorem]{Proposition}
\newtheorem{lem}{Lemma}
\newtheorem{coro}{Corollary}
\newtheorem{defn}{Definition}[section]
\newtheorem{rem}{Remark}[section]
\def\lin#1#2{\textcolor[rgb]{0.6,0.6,0.6}{\vspace*{#1mm} \hrule
   height 3 pt \vspace*{#2mm}}}
\def\bt{\begin{tabular}}
\def\et{\end{tabular}}
\def\and{\mbox{ and }}
\def\1{{\bf 1}}
 \def\boxx#1#2#3#4#5{
 {\linethickness{#4pt}\put(#1,#5){\color{myaqua}{\line(1,0){#3}}}}
 \multiput(#1,#2)(0,#4){2}{\line(1,0){#3}}
 \multiput(#1,#2)(#3,0){2}{\line(0,1){#4}}
  }
\begin{document}


 $\mbox{ }$

 \vskip 12mm

{ 

{\noindent{\Large\bf\color{myaqua}
  New contributions to the study of stochastic processes of the class \texorpdfstring{$(\Sigma)$}{sigma} }} 
%
\\[6mm]
{\bf Fulgence EYI OBIANG$^1$, Octave MOUTSINGA$^2$ and Youssef OUKNINE$^3$}}
\\[2mm]
{ 
 $^1$URMI Laboratory, Département de Mathématiques et Informatique, Faculté des Sciences, Université des Sciences et Techniques de Masuku, Franceville, Gabon 
  \\
Email: \href{mailto:feyiobiang@yahoo.fr}{\color{blue}{\underline{\smash{feyiobiang@yahoo.fr}}}}\\[1mm]
$^2$ URMI Laboratory, Département de Mathématiques et Informatique, Faculté des Sciences, Université des Sciences et Techniques de Masuku, Franceville, Gabon \\
\href{mailto:octavemoutsing-pro@yahoo.fr}{\color{blue}{\underline{\smash{octavemoutsing-pro@yahoo.fr}}}}\\[1mm]
$^3$LIBMA Laboratory, Department of Mathmatics, Faculty of Sciences Semlalia, Cadi Ayyad University, P.B.O. 2390 Marrakech, Morocco and Hassan II Academy of Sciences and Technologies, Rabat, Morocco
\\Email:\href{mailto:ouknine@ucam.ac.ma}{\color{blue}{\underline{\smash{ouknine@ucam.ac.ma}}}}\\[1mm]
\\[1mm]
\lin{5}{7}

 {  
 {\noindent{\large\bf\color{myaqua} Abstract}{\bf \\[3mm]
 \textup{
In this paper, we contribute to the study of the class $(\Sigma)$. In the first part of the paper, we provide  new ways to characterize stochastic processes of the above mentioned class and we derive some new properties. For instance, we prove that a stochastic process $X$ is an element of the class $(\Sigma)$ if, and only if, its absolute value is equal to absolute value of some martingale $M$. In the second part, we study in particular, stochastic processes of the class $(\Sigma)$ which vanish on the zero set of a given Brownian motion. More precisely, we provide a characterization theorem and methods dealing with such stochastic processes.  
 }}} 
 \\[4mm]
 {\noindent{\large\bf\color{myaqua} Keywords}{\bf \\[3mm]
Class $(\Sigma)$; Skorohod reflection  equation; Relative martingales; Honest time; Azema submartingale; Zeros of Brownian motion; Balayage formula; Enlargement of filtrations; Skew Brownian motion; Brownian local time. 
}}}\\[4mm]{\noindent{\large\bf\color{myaqua} MSC: }{\color{blue} 60G07; 60G20; 60G46; 60G48}}
\lin{3}{1}

\renewcommand{\headrulewidth}{0.5pt}
\renewcommand{\footrulewidth}{0pt}

 \pagestyle{fancy}
 \fancyfoot{}
 \fancyhead{} 
 \fancyhf{}
 \fancyhead[RO]{\leavevmode \put(-90,0){\color{myaqua}F. EYI OBIANG et al} \boxx{15}{-10}{10}{50}{15} }
 \fancyfoot[C]{\leavevmode
 \put(-2.5,-3){\color{myaqua}\thepage}}

 \renewcommand{\headrule}{\hbox to\headwidth{\color{myaqua}\leaders\hrule height \headrulewidth\hfill}}
\section*{Introduction}
 In this paper, we study stochastic processes of the class $(\Sigma)$. They are semi-martingales $(X_{t})_{t\geq0}$ of the form 
\begin{equation}\label{equ1}
	X_{t}=M_{t}+V_{t},
\end{equation}
where the finite variation part $V$, is an adapted continuous process such that $dV_{t}$ is carried by  the set $\{t\geq0: X_{t}=0\}$. Such processes have played an important role in many probabilistic studies. For instance: in the study of the family of Azéma-Yor martingales, the resolution of Skorokhod's embedding problem, the study of Brownian local times. They also play a key role in the study of zeros of continuous martingales. This class was introduced by Yor in \cite{y1}. Some of main properties were further studied by Nikeghbali in collaboration with Najnudel and some others with Cheridito and Platen \cite{pat,naj,naj1,naj2,naj3,nik,mult}. For instance, Nikeghbali in \cite{nik}, gives an interesting result which permits to characterize sub-martingales of the class $(\Sigma)$. It is the following theorem:
\begin{theorem}
 Let $X=M+V$ be a local sub-martingale, where $M$ is a local martingale and $V$ is the non-decreasing part of $X$ in the Doob-Meyer decomposition. The following are equivalent:
\begin{enumerate}
	\item The local sub-martingale $X$ is of class $(\Sigma)$.
	\item For every locally bounded Borel function $f$, and $F(x)\equiv\int_{0}^{x}{f(z)dz}$, the process
	$$W_{t}^{F}(X)=F(V_{t})-f(V_{t})X_{t}$$
	is a local martingale.
\end{enumerate}
 \end{theorem}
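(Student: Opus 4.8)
The plan is to reduce both implications to a single integration-by-parts computation and then to handle the passage from smooth to merely Borel test functions by a monotone-class argument. Throughout I treat $X$, $M$ and $V$ as continuous, so that $V$ is the continuous non-decreasing Doob--Meyer part, which is the standing framework for the class $(\Sigma)$. First I would establish the master identity: for $f\in C^1$ with $F(x)=\int_0^x f(z)\,dz$, applying integration by parts to $f(V_t)X_t$ and using that $V$ has finite variation (so that $[f(V),X]=0$) yields $d\bigl(f(V_t)X_t\bigr)=f(V_t)\,dM_t+f(V_t)\,dV_t+X_t f'(V_t)\,dV_t$, while $dF(V_t)=f(V_t)\,dV_t$. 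Subtracting,
\[
dW_t^F(X)=-f(V_t)\,dM_t-X_t f'(V_t)\,dV_t .
\]
This one formula drives both directions.

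For $(1)\Rightarrow(2)$, suppose $X$ is of class $(\Sigma)$, so $dV_t$ is carried by $\{X=0\}$; equivalently $X_s=0$ for $dV$-almost every $s$. Then the finite-variation term $X_t f'(V_t)\,dV_t$ vanishes identically and the identity collapses to $dW_t^F(X)=-f(V_t)\,dM_t$, a local martingale since $f(V)$ is locally bounded. To reach every locally bounded Borel $f$ I would invoke the functional monotone-class theorem: let $\mathcal H$ be the set of bounded Borel $f$ for which $W^F(X)$ is a local martingale; it is a vector space containing the multiplicative family of bounded smooth functions (by the previous step) and stable under bounded pointwise limits, because if $f_n\to f$ boundedly and pointwise then $F_n(V_t)\to F(V_t)$ and $f_n(V_t)X_t\to f(V_t)X_t$ pointwise, while $\int_0^\cdot f_n(V_s)\,dM_s\to\int_0^\cdot f(V_s)\,dM_s$ in the u.c.p.\ sense by dominated convergence for stochastic integrals; hence $W_t^F(X)=W_0^F(X)-\int_0^t f(V_s)\,dM_s$ is still a local martingale. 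A routine localization then removes the boundedness of $f$.

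For $(2)\Rightarrow(1)$, I would feed the hypothesis a single well-chosen function. Taking $f(x)=x$, so $F(x)=x^2/2$, the master identity reads $dW_t^F(X)=-V_t\,dM_t-X_t\,dV_t$. Since $W^F(X)$ is assumed to be a local martingale and $-\int_0^\cdot V_s\,dM_s$ already is, their difference $-\int_0^t X_s\,dV_s$ is a continuous local martingale of finite variation starting at $0$, hence identically zero. Thus the signed measure $X_s\,dV_s$ vanishes; restricting it to $\{X>0\}$ and to $\{X<0\}$ and using $dV\ge 0$ forces $dV(\{X\ne 0\})=0$, i.e.\ $dV$ is carried by $\{X=0\}$, which is precisely the class $(\Sigma)$ property.

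The main obstacle is not either computation above but the rigorous extension to arbitrary locally bounded Borel $f$ in $(1)\Rightarrow(2)$. Two points require care. First, the change of variable $dF(V_t)=f(V_t)\,dV_t$ must be justified for an absolutely continuous $F$ with only a Borel derivative $f$, composed with the continuous finite-variation process $V$. Second, one cannot approximate $f$ merely Lebesgue-almost everywhere, since the values $V_t$ may concentrate on a Lebesgue-null set; this is exactly why I use genuine everywhere-pointwise convergence as supplied by the monotone-class theorem rather than mollification. Securing the u.c.p.\ convergence of the stochastic integrals so as to upgrade the limit to the local-martingale property, compatibly with the localization, is the delicate technical step.
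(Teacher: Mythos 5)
Your proof is essentially correct, but be aware that the paper itself contains no proof of this statement: it is quoted in the introduction as a known theorem of Nikeghbali \cite{nik} (extended to semimartingales of class $(\Sigma)$ in Lemma 2.3 of \cite{pat}), so the only meaningful comparison is with the classical argument, which is the one built on the balayage formula that this paper recalls in its appendix and uses everywhere else. That argument runs: since $dV$ is carried by $\{X=0\}$, the process $V$ is constant on each excursion interval of $X$, so $V_{g_t}=V_t$ with $g_t=\sup\{s\le t:X_s=0\}$; applying the balayage formula to the bounded predictable process $k_s=f(V_s)$ gives $f(V_t)X_t=f(V_0)X_0+\int_0^t f(V_s)\,dX_s$ for \emph{every} bounded Borel $f$ at one stroke, and the change-of-variables identity $\int_0^t f(V_s)\,dV_s=F(V_t)$ (valid for Borel $f$ because $V$ is continuous and non-decreasing) then yields $W_t^F(X)=W_0^F(X)-\int_0^t f(V_s)\,dM_s$ with no approximation step at all. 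Your route --- It\^o/integration by parts for $C^1$ test functions, a functional monotone-class extension, then localization --- is genuinely different: it buys self-containedness (no appeal to balayage), at the price of exactly the technical layer you identify, whereas the balayage proof buys economy because predictable integrands absorb the Borel generality automatically. Your converse, via the single choice $f(x)=x$ and the fact that a continuous finite-variation local martingale null at $0$ vanishes identically, is the standard one and is sound.

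Two points should be repaired for full rigor. First, your class $\mathcal{H}$ must be defined as the set of bounded Borel $f$ for which the \emph{identity} $W_t^F(X)=W_0^F(X)-\int_0^t f(V_s)\,dM_s$ holds, not merely the set of $f$ for which $W^F(X)$ is a local martingale: the local-martingale property by itself does not survive bounded pointwise limits, and your closure argument in fact uses the stochastic-integral representation, so the class has to be defined by it (a harmless but necessary restatement). Second, your standing continuity assumption is a genuine restriction relative to the statement being proved: the paper's Definition 1.1 allows a c\`adl\`ag martingale part, so in the stated generality integration by parts produces $X_{s-}\,dV_s$ rather than $X_s\,dV_s$ in your master identity and in the converse; the conclusion is rescued because $dV$ is atomless ($V$ being continuous) and $X_{s-}=X_s$ off a countable set, but this upgrade from $dV(\{X_-\neq 0\})=0$ to $dV(\{X\neq 0\})=0$ needs to be said explicitly.
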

This result is extended to all semi-martingales of the class $(\Sigma)$ in Lemma 2.3 of \cite{pat}. Recently, Eyi Obiang et al in \cite{eomt} have brought some interesting contributions in this framework. In their works, they give a new way to characterize the class $(\Sigma)$. More precisely, they prove that any stochastic process is an element of the class $(\Sigma)$ if, and only if, its absolute value is a sub-martingale of the class $(\Sigma)$.

The aim of this paper is to bring our contributions to the general framework of the above mentioned class of stochastic processes. In the first one, we provide a new way to characterize stochastic processes of the class $(\Sigma)$. A consequence of this result allows to characterize any stochastic process of the class $(\Sigma)$ with respect to absolute value of some martingale. This result is very interesting because it reciprocates  a very well-known  example of processes of the class $(\Sigma)$.  More precisely, we show that for any element $X$ of the class $(\Sigma)$, there exists a martingale $M$ such that: 
$$|X_{t}|=|M_{t}|.$$

Note that the above mentioned consequence generalizes Proposition 2.3 of Ouknine and Bouhadou \cite{siam}. We also present some new properties. The second part is dedicated to the study of processes of the class $(\Sigma)$ which vanish on the set of zeros of a given Brownian motion. We provide a general framework and methods  for dealing with such processes. For instance: we establish a characterization theorem and some interesting properties for these stochastic processes. We also prove some representation results allowing to recover some stochastic processes $X$ of the class $(\Sigma)$ vanishing on the set of zeros of a Brownian motion $B$, from its final value, $X_{\infty}$ and the last time $B$ visited the origin: $\gamma=\sup\{t\geq0: B_{t}=0\}$. More precisely, from some results of enlargement filtrations theory  \cite{jeulin80} and of the theory of zeros of continuous martingales \cite{1}, we prove the next identity:
$$X_{t}=E\left[X_{\infty}1_{\{\gamma\leq t\}}|\mathcal{F}_{t}\right].$$
Analogy results were established for sub-martingales of the class $(\Sigma)$ in \cite{pat} when $\gamma$ is the last time which vanishes $X$ instead $B$.

The paper is organized as follows: in Section 1, we give some notations and recall some useful definitions and Terminologies we shall use throughout this paper. In section 2, we establish a new characterization and new properties on stochastic processes of the class $(\Sigma)$. Finally, in Section 3, we study elements of the class $(\Sigma)$ which vanish on zeros of a given Brownian motion.

For the reader's convenience, we collect in the appendix useful results from the theory of enlargement of filtrations. We also recall some results on balayage formula in the progressive case we use in this paper.

 \noindent 
\section{Notations and Terminologies}

	We begin by recalling the definition of the class $(\Sigma)$. Throughout we fix a filtered probability space \\$(\Omega,(\F_{t})_{t\geq0},\F,\Pv)$ satisfying the usual conditions.
 \begin{defn}
 We say that a stochastic process $X$ is of class $(\Sigma)$ if it decomposes as $X=M+V$, where
\begin{enumerate}
	\item $M$ is a càdlàg local martingale,
	\item $V$ is an adapted continuous finite variation process starting at 0,
	\item $\int_{0}^{t}{1_{\{X_{u}\neq0\}}dV_{u}}=0$ for all $t\geq0$.
\end{enumerate}   
 \end{defn}
 Recall that a stochastic process $X$ is said to be of class $(D)$ if $\{X_{\tau}:\tau\hspace{0.15cm} is\hspace{0.15cm} a\hspace{0.15cm} finite\hspace{0.15cm} stopping\hspace{0.15cm} time\}$ is uniformly integrable. We shall say that $X$ is of class $(\Sigma D)$ if $X$ is of class $(\Sigma)$ and of class $(D)$.

Throughout this work, we shall always use the following notations:
\begin{itemize}
	\item $B$ denotes a given Brownian motion.
	\item $L_{t}^{0}(B)$ is the local time of $B$ at level zero.
	\item $\mathcal{Z}_{1}=\{t\leq1: B_{t}=0\}$.
	\item $\gamma=\sup\{t\leq1: B_{t}=0\}$, $\gamma_{t}=\sup\{s\leq t: B_{s}=0\}$.
	\item For any process $X$, we shall denote $g=\sup\{t\geq0: X_{t}=0\}$, $g_{t}=\sup\{s\leq t: X_{s}=0\}$ and the process $K$ will be defined as:
	$$K_{t}=\lim_{s\searrow t}\inf{(1_{X_{s}>0}-1_{X_{s}<0})}.$$
	\end{itemize}
	
Now, we recall an interesting observation of Prokaj \cite{prok} which shall play a key role in this paper: For any continuous semimartingale $Y$, the set $\mathcal{W}=\{t\geq0; Y_{t}=0\}$ cannot be ordered. However, the set $\R_{+}\setminus\mathcal{W}$ can be decomposed as a countable union $\cup_{n\N}{J_{n}}$ of intervals $J_{n}$. Each interval $J_{n}$ corresponds to some excursion of $Y$. That is if $J_{n}=]g_{n},d_{n}[$, $Y_{t}\neq0$ for all $t\in]g_{n},d_{n}[$ and $Y_{g_{n}}=Y_{d_{n}}=0$. At each $J_{n}$ we associate a Bernoulli random variable $\zeta_{n}$ which is independent from any other random variables and such that 
$$P(\zeta_{n}=1)=\alpha\text{ and }P(\zeta_{n}=-1)=1-\alpha.$$
Now, let us define the process $Z^{\alpha}$ we use throughout this paper.
\begin{equation}\label{zalpha}
	Z^{\alpha}_{t}=\sum_{n=0}^{+\infty}{\zeta_{n}1_{]g_{n},d_{n}[}(t)}.
\end{equation}

We close this section by recalling an important theorem of \cite{eomt}. That is a result which gives a way to characterize stochastic processes of the class $(\Sigma)$. 
 \begin{theorem}\label{abs}
 Let $X$ be a continuous process which vanishes at zero. Then,
 $$X\in(\Sigma) \Leftrightarrow |X|\in(\Sigma).$$
 \end{theorem}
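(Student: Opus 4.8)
The plan is to prove the two implications separately, the forward one being an essentially direct application of Tanaka's formula and the converse requiring the balayage machinery recalled in the appendix.

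For the implication $X\in(\Sigma)\Rightarrow|X|\in(\Sigma)$, I would start from a decomposition $X=M+V$ as in the definition of the class $(\Sigma)$. Since $X$ and $V$ are continuous, $M=X-V$ is a continuous local martingale, so Tanaka's formula applies to $|X|$ and, using $X_0=0$, gives
$$|X_t| = \int_0^t \operatorname{sgn}(X_s)\,dM_s + \int_0^t \operatorname{sgn}(X_s)\,dV_s + L_t^0(X).$$
The first term is a continuous local martingale, because $\operatorname{sgn}(X)$ is bounded and the value of $\operatorname{sgn}$ at $0$ is irrelevant there (the occupation time formula yields $\int_0^t 1_{\{X_s=0\}}\,d\langle M\rangle_s=0$, as $\langle X\rangle=\langle M\rangle$). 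For the remaining two terms, condition (3) of the definition says $dV$ is carried by $\{X=0\}$, and $dL^0(X)$ is carried by $\{X=0\}$ by the very definition of local time; hence each is a continuous finite variation process, null at $0$, whose differential is carried by $\{|X|=0\}=\{X=0\}$. This exhibits $|X|=M'+V'$ with $M'=\int_0^\cdot\operatorname{sgn}(X)\,dM$ and $V'=\int_0^\cdot\operatorname{sgn}(X)\,dV+L^0(X)$, proving $|X|\in(\Sigma)$.

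For the converse $|X|\in(\Sigma)\Rightarrow X\in(\Sigma)$, I would exploit that $|X|$ is a continuous semimartingale and reconstruct $X$ from it via the progressive balayage formula. Write $|X|=N+A$ with $N$ a local martingale and $A$ continuous, finite variation, null at $0$, with $dA$ carried by $\{|X|=0\}$. The key point is that the sign process $K$ introduced in Section 1, evaluated at $g_t$, recovers the sign of $X$ on the current excursion: on an excursion interval $]g_n,d_n[$ one has $g_t=g_n$ and $K_{g_n}=\operatorname{sgn}(X_t)$ by the liminf description of $K$, while on $\{X=0\}$ both sides vanish, so that $K_{g_t}|X_t|=X_t$ for every $t$ (the values on the unordered zero set being covered by Prokaj's excursion decomposition). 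Applying the balayage formula to the bounded integrand $K$ and the semimartingale $|X|$ then yields
$$X_t = K_{g_t}|X_t| = \int_0^t K_{g_s}\,d|X|_s = \int_0^t K_{g_s}\,dN_s + \int_0^t K_{g_s}\,dA_s =: M_t + V_t,$$
where $M$ is a continuous local martingale and $V$ is a continuous finite variation process, null at $0$, with $dV$ carried by $\{X=0\}$ since $dA$ is. This is precisely the decomposition required for $X\in(\Sigma)$.

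The main obstacle is this converse direction, and specifically the justification that $K_{g_\cdot}$ is a legitimate integrand for which both the balayage identity holds and $\int_0^\cdot K_{g_s}\,dN_s$ is genuinely a local martingale. The difficulty is that the excursion sign is not adapted at the left endpoint $g_n$ (which is not a stopping time), so one cannot naively treat $K_{g_\cdot}$ as predictable; this is exactly the situation handled by the progressive form of the balayage formula recalled in the appendix, which must be invoked with some care. The verification that the zero sets of $X$ and $|X|$ coincide and that $V$ starts at $0$ is then immediate.
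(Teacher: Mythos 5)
You should first note that the paper does not actually prove Theorem \ref{abs}: it is recalled without proof from \cite{eomt}, so your attempt can only be measured against the techniques the paper itself deploys elsewhere (Corollaries \ref{mart} and \ref{p3}) --- which are precisely the ones you reach for. Your forward direction is correct and standard: Tanaka's formula applied to the continuous semimartingale $X=M+V$ gives $|X|=\int_0^\cdot \mathrm{sgn}(X_s)\,dM_s+\int_0^\cdot \mathrm{sgn}(X_s)\,dV_s+L^0(X)$, and the last two terms are continuous, of finite variation, null at $0$, with differentials carried by $\{X=0\}=\{|X|=0\}$.

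The converse, however, has a genuine flaw as written, one you half-acknowledge in your closing paragraph but never repair. The displayed chain
$$X_t = K_{g_t}|X_t| = \int_0^t K_{g_s}\,d|X_s| = \int_0^t K_{g_s}\,dN_s + \int_0^t K_{g_s}\,dA_s$$
is not the balayage formula, and its middle term is not even a well-defined It\^o integral: the process $(K_{g_s})_{s\geq0}$ is \emph{not adapted}, since at a time $s$ in the zero set of $X$ one has $g_s=s$ and $K_{g_s}=K_s$, which anticipates the sign of the excursion about to begin. The progressive balayage formula recalled in the appendix (Proposition 2.3 of Ouknine--Bouhadou \cite{siam}) reads
$$K_{g_t}|X_t| = K_0|X_0| + \int_0^t {}^{p}(K_{g_s})\,d|X_s| + R_t,$$
i.e.\ it forces the predictable projection ${}^{p}(K_{g_\cdot})$ as integrand and produces an additional continuous, adapted, finite-variation remainder $R$ with $dR$ carried by $\{|X|=0\}$. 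The correct decomposition is therefore $M_t=\int_0^t {}^{p}(K_{g_s})\,dN_s$, which is a genuine local martingale because the projection is predictable and bounded, and $V_t=\int_0^t {}^{p}(K_{g_s})\,dA_s+R_t$, whose differential is still carried by $\{X=0\}$ because both $dA$ and $dR$ are. With this correction --- which is exactly how the paper itself argues in Corollary \ref{p3} --- your proof closes and yields $X\in(\Sigma)$; without it, the claimed decomposition $M=\int_0^\cdot K_{g_s}\,dN_s$, $V=\int_0^\cdot K_{g_s}\,dA_s$ is meaningless rather than merely imprecise.
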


\section{New characterizations of the class \texorpdfstring{$(\Sigma)$}{sigma}}

In this section, we shall state new results to characterize stochastic processes of the class $(\Sigma)$. 
The following theorem is the main result of this paper. It gives a new way to characterize stochastic processes of the class $(\Sigma)$.
\begin{theorem}\label{z}
Let $X$ be a continuous semimartingale. The following are equivalent:
\begin{enumerate}
	\item $X\in(\Sigma)$.
	\item $\forall\alpha\in[0,1]$, $Z^{\alpha}X\in(\Sigma)$.
	\item $\exists\alpha\in[0,1]$ such that $Z^{\alpha}X\in(\Sigma)$.
\end{enumerate}
\end{theorem}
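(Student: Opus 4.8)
The plan is to reduce all three statements to the absolute-value criterion of Theorem \ref{abs}, using the single elementary observation that the multiplier $Z^{\alpha}$ only flips the sign of $X$ on each excursion interval and hence leaves the modulus unchanged:
$$|Z^{\alpha}_{t}X_{t}| = |X_{t}| \qquad \text{for every } t\ge 0 .$$
Indeed, on an excursion interval $]g_{n},d_{n}[$ one has $Z^{\alpha}_{t}=\zeta_{n}\in\{-1,+1\}$, so $|Z^{\alpha}_{t}|=1$ there, whereas on the zero set $\{t: X_{t}=0\}$ both $X_{t}$ and $Z^{\alpha}_{t}$ vanish. With this identity in hand the implication $(2)\Rightarrow(3)$ is immediate, so it suffices to prove, for an \emph{arbitrary fixed} $\alpha\in[0,1]$, the two-sided implication $X\in(\Sigma)\Leftrightarrow Z^{\alpha}X\in(\Sigma)$. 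This single equivalence yields the full cycle $(1)\Rightarrow(2)\Rightarrow(3)\Rightarrow(1)$.

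First I would record the structural properties of $Y:=Z^{\alpha}X$. Assuming, as in the setting of Theorem \ref{abs}, that $X_{0}=0$, the process $Y$ vanishes at $0$, and it is continuous: on each $]g_{n},d_{n}[$ it coincides with the continuous process $\zeta_{n}X$, while at any point $t_{0}$ of the zero set one has $Y_{t_{0}}=0$ and $|Y_{t}|=|X_{t}|\to 0$ as $t\to t_{0}$ by continuity of $X$, whence $Y_{t}\to Y_{t_{0}}$. Thus $Y$ is a continuous process vanishing at zero, which is exactly the hypothesis under which Theorem \ref{abs} applies.

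The two chains of implications then close at once. If $X\in(\Sigma)$, Theorem \ref{abs} gives $|X|\in(\Sigma)$; since $|Y|=|X|$, the reverse direction of Theorem \ref{abs} applied to $Y$ produces $Y=Z^{\alpha}X\in(\Sigma)$, which is $(1)\Rightarrow(2)$. Conversely, if $Z^{\alpha}X\in(\Sigma)$ for some $\alpha$, the forward direction of Theorem \ref{abs} gives $|X|=|Z^{\alpha}X|\in(\Sigma)$, and one last application of Theorem \ref{abs} to $X$ returns $X\in(\Sigma)$, i.e.\ $(3)\Rightarrow(1)$. A pleasant feature of this route is that I never have to exhibit a semimartingale decomposition of $Z^{\alpha}X$ by hand: its membership in $(\Sigma)$, and in particular its semimartingale property, is delivered directly by the backward direction of Theorem \ref{abs}, which is stated for a mere continuous process.

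The \emph{main obstacle} is the filtration. The signs $\zeta_{n}$ are not adapted to $(\F_{t})_{t\ge0}$, so the assertion $Z^{\alpha}X\in(\Sigma)$ is meaningful only in an enlarged filtration $(\mathcal{G}_{t})$ that reveals $\zeta_{n}$ as soon as the excursion $]g_{n},d_{n}[$ begins, and Theorem \ref{abs} must be applied in this $(\mathcal{G}_{t})$. The delicate point is therefore to verify that membership in $(\Sigma)$ transfers between the two filtrations: because each $\zeta_{n}$ is independent of $\F_{\infty}$ and of the other signs, $(\F_{t})$ is immersed in $(\mathcal{G}_{t})$, so every $(\F_{t})$-local martingale remains a $(\mathcal{G}_{t})$-local martingale, the local-martingale and finite-variation parts of $|X|$ are unaffected, and the continuous finite-variation part is still carried by the common zero set of $|X|$ and $Z^{\alpha}X$. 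Checking this immersion by means of the enlargement-of-filtration results collected in the appendix is the one genuinely technical step; everything else is formal. (Should one wish to bypass Theorem \ref{abs} altogether, the alternative would be to compute the decomposition of $Z^{\alpha}X$ directly from the progressive balayage formula, but the passage through the absolute value is markedly shorter.)
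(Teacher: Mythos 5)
Your argument is correct (granting Theorem \ref{abs} exactly as stated, which the paper also does), but it takes a genuinely different route for the implication $(1)\Rightarrow(2)$. The paper proves $(1)\Rightarrow(2)$ by computation: it invokes the balayage identity of Proposition 2.2 of \cite{siam} to write $Z^{\alpha}_{t}X_{t}=\int_{0}^{t}Z^{\alpha}_{s}\,dX_{s}+(2\alpha-1)L^{0}_{t}(Z^{\alpha}X)$, notes that $\int_{0}^{t}Z^{\alpha}_{s}\,dV_{s}=0$ because $dV$ is carried by $\{X=0\}=\{Z^{\alpha}=0\}$, and reads off an explicit $(\Sigma)$-decomposition of $Z^{\alpha}X$; Theorem \ref{abs} together with the identity $|Z^{\alpha}X|=|X|$ is used only for $(3)\Rightarrow(1)$, exactly as in your proposal. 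You instead run Theorem \ref{abs} in both directions for both implications, never exhibiting a decomposition of $Z^{\alpha}X$. What your route buys: it is shorter, and it confronts the filtration-enlargement issue that the paper passes over in silence --- your immersion argument (the $\zeta_{n}$ are independent of $\F_{\infty}$, hence every $(\F_{t})$-local martingale remains a local martingale in the enlarged filtration) is sound; for full symmetry one should also say how to come back down from the enlarged filtration to $(\F_{t})$ in $(3)\Rightarrow(1)$ (e.g.\ Stricker's theorem plus uniqueness of the decomposition of a continuous semimartingale), but that gap is shared with, not worse than, the paper's own proof. What the paper's route buys: its computation is not wasted effort, since the resulting identity is recorded as Remark \ref{o} and is the sole ingredient of Corollary \ref{cmart} (for $\alpha=\frac{1}{2}$ the local-time term vanishes, so $Z^{1/2}X$ is a local martingale), on which Proposition \ref{pabs} and essentially all of Section 3 rest; your proof establishes the theorem but not that formula, so within the economy of the paper the balayage computation cannot actually be dispensed with.
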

\begin{proof}
$1\Rightarrow2)$ Let $X=M+V$ be an element of the class $(\Sigma)$. One has from Proposition 2.2 of \cite{siam} what follows
$$Z^{\alpha}_{t}X_{t}=\int_{0}^{t}{Z^{\alpha}_{s}dX_{s}}+(2\alpha-1)L_{t}^{0}(Z^{\alpha}X)$$
$$\hspace{3cm}=\int_{0}^{t}{Z^{\alpha}_{s}dM_{s}}+\int_{0}^{t}{Z^{\alpha}_{s}dV_{s}}+(2\alpha-1)L_{t}^{0}(Z^{\alpha}X).$$
But, we know that $\int_{0}^{t}{Z^{\alpha}_{s}dV_{s}}=0$ since $dV_{t}$ is carried by $\{t\geq0; X_{t}=0\}$ and $X_{t}=0$ $\Leftrightarrow$ $Z^{\alpha}_{t}=0$. Hence,
\begin{equation}\label{dem}
Z^{\alpha}_{t}X_{t}=\int_{0}^{t}{Z^{\alpha}_{s}dM_{s}}+(2\alpha-1)L_{t}^{0}(Z^{\alpha}X).	
\end{equation}
Then, $Z^{\alpha}X\in(\Sigma)$ since $(2\alpha-1)dL_{t}^{0}(Z^{\alpha}X)$ is carried by $\{t\geq0; Z^{\alpha}_{t}X_{t}=0\}$ and $\left(\int_{0}^{t}{Z^{\alpha}_{s}dM_{s}};t\geq0\right)$ is a local martingale.\\
$2\Rightarrow3)$ If we consider that $\forall\alpha\in[0,1]$, $Z^{\alpha}X\in(\Sigma)$. It follows in particular that $\exists\alpha\in[0,1]$ such that $Z^{\alpha}X\in(\Sigma)$.\\
$3\Rightarrow1)$ Now, assume that $\exists\alpha\in[0,1]$ such that $Z^{\alpha}X\in(\Sigma)$. Then, according to Theorem \ref{abs}, $|Z^{\alpha}X|\in(\Sigma)$. But $\forall t\geq0$, $Z^{\alpha}_{t}\in\{-1,0,1\}$ and $Z^{\alpha}_{t}=0\Leftrightarrow X_{t}=0$. Therefore,  
$$|Z^{\alpha}X|=|X|.$$
Consequently, 
$$|X|\in(\Sigma).$$
This completes the proof.
\end{proof}

\begin{rem}\label{o}
We have proved in the above theorem that when $X\in(\Sigma)$, one has $\forall\alpha\in[0,1]$,
$$Z^{\alpha}_{t}X_{t}=\int_{0}^{t}{Z^{\alpha}_{s}dM_{s}}+(2\alpha-1)L_{t}^{0}(Z^{\alpha}X).$$
\end{rem}

Now, as an application of Theorem \ref{z}, we have the following corollary. It gives a new characterization martingale of the class $(\Sigma)$.
\begin{coro}\label{cmart}
Let $X$ be a continuous semimartingale. Then, the following holds:
$$X\in(\Sigma)\Leftrightarrow \exists\alpha\in[0,1] \text{ such that }Z^{\alpha}X \text{ is a local martingale}.$$
\end{coro}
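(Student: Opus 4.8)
The plan is to deduce both implications directly from Theorem \ref{z} together with the identity recorded in Remark \ref{o}, the decisive observation being that the coefficient $(2\alpha-1)$ in front of the local time vanishes precisely at $\alpha=1/2$. In other words, among all the processes $Z^{\alpha}X$ that are shown to belong to $(\Sigma)$ when $X\in(\Sigma)$, exactly one of them, namely $Z^{1/2}X$, has no finite variation part at all and is therefore a genuine local martingale. This single choice drives the forward implication, while the reverse implication is nothing more than the elementary remark that every local martingale lies in $(\Sigma)$, allowing us to invoke $3\Rightarrow1$ of Theorem \ref{z}.

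For the implication from left to right, I would start from $X=M+V\in(\Sigma)$ and apply Remark \ref{o} with the particular value $\alpha=1/2$. Since $2\alpha-1=0$ in that case, the local time term disappears and the identity collapses to
$$Z^{1/2}_{t}X_{t}=\int_{0}^{t}{Z^{1/2}_{s}dM_{s}}.$$
As already established in the proof of Theorem \ref{z}, the process $\left(\int_{0}^{t}{Z^{\alpha}_{s}dM_{s}};t\geq0\right)$ is a local martingale, $Z^{\alpha}$ being bounded and taking values in $\{-1,0,1\}$; hence $Z^{1/2}X$ is a local martingale. This exhibits an $\alpha\in[0,1]$, namely $\alpha=1/2$, with the required property.

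For the reverse implication, suppose there exists $\alpha\in[0,1]$ such that $Z^{\alpha}X$ is a local martingale. Writing $Z^{\alpha}X=Z^{\alpha}X+0$ with vanishing finite variation part, one sees at once that $Z^{\alpha}X\in(\Sigma)$, the condition $\int_{0}^{t}{1_{\{(Z^{\alpha}X)_{u}\neq0\}}dV_{u}}=0$ being trivially satisfied. Theorem \ref{z}, in the form $3\Rightarrow1$, then yields $X\in(\Sigma)$, which completes the argument. I do not anticipate any genuine obstacle here: the substantive work has been carried out in Theorem \ref{z} and Remark \ref{o}, and the only point requiring insight is the recognition that setting $\alpha=1/2$ annihilates the local time contribution and thereby turns the membership in $(\Sigma)$ into the strictly stronger local martingale property.
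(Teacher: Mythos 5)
Your proof is correct and follows essentially the same route as the paper's: the forward direction specializes Remark \ref{o} to $\alpha=\tfrac{1}{2}$ so that the coefficient $(2\alpha-1)$ kills the local time term, leaving the stochastic integral $\int_{0}^{t}{Z^{\alpha}_{s}dM_{s}}$, and the reverse direction observes that a local martingale trivially belongs to $(\Sigma)$ and then invokes the implication $3\Rightarrow1$ of Theorem \ref{z}. Your added justification that a local martingale lies in $(\Sigma)$ because its finite variation part may be taken to be zero is a small but welcome elaboration of a step the paper leaves implicit.
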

\begin{proof}
$\Rightarrow)$ It follows from Remark \ref{o} that $\forall \alpha\in[0,1]$,
$$Z^{\alpha}_{t}X_{t}=\int_{0}^{t}{Z^{\alpha}_{s}dM_{s}}+(2\alpha-1)L_{t}^{0}(Z^{\alpha}X).$$
Hence, we obtain in particular for $\alpha=\frac{1}{2}$ that
$$Z^{\alpha}_{t}X_{t}=\int_{0}^{t}{Z^{\alpha}_{s}dM_{s}}.$$
 Therefore, $Z^{\alpha}X$ is a local martingale.

$\Leftarrow)$ Now, if we assume that $\exists\alpha\in[0,1]$ such that $Z^{\alpha}X$ is a local martingale. It follows that $Z^{\alpha}X\in(\Sigma)$. Then, it follows from Theorem \ref{z} that $X\in(\Sigma)$.
\end{proof}

It is well know that the absolute value $|M|$ of a continuous local martingale $M$, is an element of the class $(\Sigma)$. In next proposition, we show that for any stochastic process $X$ of the class $(\Sigma)$, there exists a local martingale $M$ which has the same absolute value that $X$.

\begin{prop}\label{pabs}
Let $X$ be a continuous semimartingale. Then, $X$ is an element of the class $(\Sigma)$ if and only if there exists a local martingale $M$ such that
$$|X|=|M|.$$
\end{prop}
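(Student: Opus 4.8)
The plan is to prove the two implications separately, each time reducing to a result already available in the excerpt: the forward implication to Corollary \ref{cmart} (equivalently Remark \ref{o} specialized to $\alpha=\tfrac12$), and the converse to Tanaka's formula together with Theorem \ref{abs}.

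For the direct implication I would assume $X\in(\Sigma)$ and simply exhibit the martingale. By Corollary \ref{cmart}, whose proof singles out $\alpha=\tfrac12$, the process $M:=Z^{1/2}X$ is a local martingale; indeed Remark \ref{o} gives $Z^{1/2}_tX_t=\int_0^t Z^{1/2}_s\,dM_s$ because the coefficient $(2\alpha-1)$ of the local time vanishes at $\alpha=\tfrac12$. It then remains only to compare absolute values: since $Z^{\alpha}_t\in\{-1,0,1\}$ and $Z^{\alpha}_t=0\Leftrightarrow X_t=0$, we have $|Z^{1/2}_t|=1$ exactly when $X_t\neq0$ and both factors vanish otherwise, so $|M_t|=|Z^{1/2}_tX_t|=|X_t|$ for every $t$.

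For the converse I would start from $|X|=|M|$ with $M$ a local martingale and show $|X|\in(\Sigma)$, then invoke Theorem \ref{abs}. Since $X$ is a continuous semimartingale, $|X|=|M|$ is continuous and $M$ may be taken continuous, so Tanaka's formula applies: $|M_t|=|M_0|+\int_0^t \mathrm{sgn}(M_s)\,dM_s+L_t^0(M)$, where the bracketed part is a local martingale and $L^0(M)$ is continuous, starts at $0$, and grows only on $\{M=0\}=\{|M|=0\}$. Hence $|M|\in(\Sigma)$, so $|X|\in(\Sigma)$, and Theorem \ref{abs} gives $X\in(\Sigma)$.

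Most of the weight is carried by the cited results, so the computations are short; the two points I would check with care are that the $M$ built in the forward direction is genuinely $Z^{1/2}X$ and that multiplying $X$ by a $\{-1,0,1\}$-valued factor vanishing exactly on $\{X=0\}$ leaves the absolute value unchanged, and, in the converse, that $M$ can be taken continuous (so that Tanaka applies) and that $X$ vanishes at the origin (so that Theorem \ref{abs} applies). I expect the only real friction to be this last point, which is resolved by the standing convention in this paper that the processes of class $(\Sigma)$ under study start at $0$.
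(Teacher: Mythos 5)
Your proof is correct and follows essentially the same route as the paper: the forward direction takes $M=Z^{1/2}X$ via Corollary \ref{cmart} (Remark \ref{o} with the local-time coefficient $2\alpha-1$ vanishing), and the converse applies Tanaka's formula to $|M|$ and concludes with Theorem \ref{abs}. The only cosmetic difference is that you argue the finite-variation part grows on $\{M=0\}=\{X=0\}$ directly, where the paper writes $L^0(M)=L^0(X)$; both versions also share the same implicit assumptions (continuity of $M$ in the converse, and $X_0=0$ for Theorem \ref{abs}), which you at least flag explicitly.
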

\begin{proof}
$\Rightarrow)$ Assume that $X$ is an element of the class $(\Sigma)$ and define $Z^{\alpha}$ with $\alpha=\frac{1}{2}$. Hence, it entails from Corollary \ref{cmart} that $M=Z^{\alpha}X$ is a continuous local martingale. Then, $|X|=|M|$ since $|Z^{\alpha}X|=|X|$.

$\Leftarrow)$ Now, assume that there exists a continuous martingale $M$ such that $|X|=|M|$. We obtain from Tanaka's formula what follows
$$|X_{t}|=|M_{t}|=\int_{0}^{t}{sign(M_{s})dM_{s}}+L_{t}^{0}(M).$$
But, $L_{t}^{0}(M)=L_{t}^{0}(X)$ and $dL_{t}^{0}(X)$ is carried by $\{t\geq0: X_{t}=0\}$. Thus, $|X|\in(\Sigma)$. Consequently, it follows from Theorem \ref{abs} that $X\in(\Sigma)$. 
\end{proof}
Now, we give a series of interesting corollaries of Proposition \ref{pabs}. We begin by the next corollary which has been established by Bouhadou and Ouknine in Proposition 2.3 of \cite{siam}. 

\begin{coro}\label{sia}
If $X$ is a non-negative and continuous sub-martingale of the class $(\Sigma)$, then there exists a local martingale $M$ such that $X=|M|$.
\end{coro}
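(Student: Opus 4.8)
The plan is to read this off directly from Proposition \ref{pabs}, which already does all the heavy lifting. Since $X$ is assumed to be a non-negative continuous sub-martingale of the class $(\Sigma)$, it is in particular a continuous semimartingale belonging to $(\Sigma)$, so the hypotheses of Proposition \ref{pabs} are met. First I would invoke the forward ($\Rightarrow$) direction of that proposition to produce a local martingale $M$ satisfying $|X|=|M|$. Then I would simply observe that the non-negativity of $X$ gives $|X_{t}|=X_{t}$ for every $t\geq0$, so the identity $|X|=|M|$ collapses to $X=|M|$, which is exactly the claimed conclusion.

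There is essentially no genuine obstacle here: the only substantive content, namely the existence of a martingale with the prescribed absolute value, is precisely Proposition \ref{pabs}, and the remaining step is the elementary remark that $|X|=X$ for a non-negative process. If one wishes to be explicit about the witnessing martingale rather than merely asserting its existence, one can recall from the proof of Proposition \ref{pabs} that it may be taken as $M=Z^{\alpha}X$ with $\alpha=\frac12$, where $Z^{\alpha}$ is the sign-assigning process defined in \eqref{zalpha}; Corollary \ref{cmart} guarantees that this $M$ is a continuous local martingale, and $|Z^{\alpha}X|=|X|=X$ yields the representation directly. Either way, the statement follows immediately, so the proof will be short.
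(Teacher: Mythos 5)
Your proposal is correct and matches the paper's intent exactly: the paper states this corollary without a written proof, presenting it as an immediate consequence of Proposition \ref{pabs}, which is precisely your argument (apply the forward direction to obtain $|X|=|M|$, then use $X\geq 0$ to conclude $X=|M|$). Your optional remark that one may take $M=Z^{\alpha}X$ with $\alpha=\tfrac12$ is also consistent with the construction in the proofs of Proposition \ref{pabs} and Corollary \ref{cmart}.
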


\begin{coro}\label{mart}
Let $X$ be a continuous stochastic process of the class $(\Sigma)$ and  set 
$$K_{t}=\lim_{s\searrow t}\inf{(1_{X_{s}>0}-1_{X_{s}<0})}.$$
Then, there exists a local martingale $M$ such that $\forall t\geq0$, $$X_{t}=K_{g_{t}}|M_{t}|.$$
\end{coro}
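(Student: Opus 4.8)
The plan is to combine Proposition~\ref{pabs} with a direct sign analysis based on Prokaj's excursion decomposition recalled in Section~1. Since $X$ is a continuous process of the class $(\Sigma)$ it is in particular a continuous semimartingale, so Proposition~\ref{pabs} furnishes a local martingale $M$ with $|X|=|M|$; concretely one may take $M=Z^{\alpha}X$ with $\alpha=\frac12$, exactly as in the proof of that proposition. It then suffices to show that $X_{t}=K_{g_{t}}\,|X_{t}|$ for every $t\geq0$, i.e. that $K_{g_{t}}$ reproduces the sign of $X_{t}$ whenever $X_{t}\neq0$, while both sides vanish on $\{t\geq0:X_{t}=0\}$.

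For the sign I would localize on excursion intervals. Write $\R_{+}\setminus\{t:X_{t}=0\}=\cup_{n}J_{n}$ with $J_{n}=]g_{n},d_{n}[$ as in Section~1. On each $J_{n}$ the process $X$ is continuous and does not vanish, so by the intermediate value theorem it keeps a constant sign $\varepsilon_{n}\in\{-1,1\}$ there. Now fix $t\in J_{n}$. By definition $g_{t}=\sup\{s\leq t:X_{s}=0\}=g_{n}$, hence $K_{g_{t}}=K_{g_{n}}$. As $s\searrow g_{n}$ the points $s$ eventually lie in $]g_{n},d_{n}[$, where $1_{X_{s}>0}-1_{X_{s}<0}=\varepsilon_{n}$; therefore the one-sided $\liminf$ defining $K_{g_{n}}$ stabilizes and equals $\varepsilon_{n}$. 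Thus $K_{g_{t}}=\varepsilon_{n}=\operatorname{sign}(X_{t})$ for every $t\in J_{n}$.

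Finally I would assemble the identity. For $t$ lying in some excursion interval we have $X_{t}\neq0$ and, by the previous step together with $|X|=|M|$, $X_{t}=\operatorname{sign}(X_{t})\,|X_{t}|=K_{g_{t}}\,|M_{t}|$. For $t$ with $X_{t}=0$ we have $|M_{t}|=|X_{t}|=0$, so $K_{g_{t}}\,|M_{t}|=0=X_{t}$ regardless of the value of $K_{g_{t}}$ (here $g_{t}=t$), and the identity holds trivially. These two cases exhaust $\R_{+}$, which completes the argument.

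I expect the only delicate point to be the justification that $K_{g_{t}}$ captures exactly the sign of the excursion straddling $t$: one must verify that the one-sided $\liminf$ in the definition of $K$, evaluated at the left endpoint $g_{n}$, genuinely settles at $\varepsilon_{n}$, and that no pathology at the endpoints $g_{n},d_{n}$ or on the (uncountable, unorderable) zero set interferes. This is resolved by the continuity of $X$ together with $d_{n}>g_{n}$, which guarantees a true right-neighborhood of $g_{n}$ contained in $J_{n}$.
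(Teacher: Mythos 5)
Your proposal is correct and takes essentially the same route as the paper: invoke Proposition~\ref{pabs} to obtain a local martingale $M$ with $|X|=|M|$, then conclude via the identity $X_{t}=K_{g_{t}}|X_{t}|$. The only difference is that the paper asserts $X_{t}=K_{g_{t}}|X_{t}|$ without justification, whereas you verify it carefully through the excursion decomposition and the one-sided $\liminf$ at the left endpoints --- a useful elaboration, but not a different method.
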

\begin{proof}
According to Proposition \ref{pabs}, there exists a local martingale $M$ such that
$$|X|=|M|.$$
This entails that 
$$K_{g_{t}}|X_{t}|=K_{g_{t}}|M_{t}|.$$
Consequently,
$$X_{t}=K_{g_{t}}|M_{t}|,$$
 Since $X_{t}=K_{g_{t}}|X_{t}|$. This completes the proof.
\end{proof}

\begin{coro}\label{p3}
Let $X$ be a process of the class $(\Sigma)$. Then, there exists $\alpha\in[0,1]$ and a process of bounded variation, adapted, continuous $R$ such that the measure $dR_{t}$ is carried by $\{t\geq0: X_{t}=0\}$ and the process 
$$\left(X_{t}-\int_{0}^{t}{^{p}(K_{s})dL^{0}_{s}(Z^{\alpha}X)}-R_{t}: t\geq0\right)$$ 
is a local martingale. 
\end{coro}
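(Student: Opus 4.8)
The plan is to build the martingale decomposition of $X$ by expanding the product $Z^{\alpha}X$ and isolating its local-martingale part. First I would fix $\alpha=\tfrac12$ and recall from Remark~\ref{o} that for $X=M+V\in(\Sigma)$ one has the explicit identity
$$Z^{\alpha}_{t}X_{t}=\int_{0}^{t}{Z^{\alpha}_{s}dM_{s}}+(2\alpha-1)L_{t}^{0}(Z^{\alpha}X),$$
so that with this choice of $\alpha$ the process $N_{t}=Z^{\alpha}_{t}X_{t}=\int_{0}^{t}Z^{\alpha}_{s}dM_{s}$ is itself a local martingale (Corollary~\ref{cmart}). The idea is then to recover $X$ from $N$ by multiplying back by $Z^{\alpha}$, since $X_{t}=Z^{\alpha}_{t}N_{t}$ on the complement of the zero set and both vanish together; more precisely $X_{t}=K_{g_{t}}|X_{t}|=K_{g_{t}}|N_{t}|$ by Corollary~\ref{mart}, and $Z^{\alpha}$ records exactly the sign $K$ of $X$ excursion by excursion.

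Next I would apply the progressive balayage formula (collected in the appendix) to the local martingale $N$ and the sign process $K$, which is optional and constant on each excursion interval of $X$. Balayage gives that $K_{g_{t}}N_{t}$ decomposes as a stochastic integral against $dN$ plus a finite-variation term supported on the zero set $\{t\ge0:N_{t}=0\}=\{t\ge0:X_{t}=0\}$. Because $Z^{\alpha}$ is, by construction, the sign assignment underlying $N$, the integral $\int_{0}^{t}K_{s}\,dN_{s}$ should reproduce $X_{t}$ up to a correction involving the predictable projection $^{p}(K_{s})$ against the local time $dL^{0}_{s}(Z^{\alpha}X)$: this is precisely the term $\int_{0}^{t}{}^{p}(K_{s})\,dL^{0}_{s}(Z^{\alpha}X)$ appearing in the statement. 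The remaining finite-variation discrepancy, which is adapted, continuous, and carried by $\{X=0\}$, is collected into the process $R$; that $dR_{t}$ lives on the zero set follows because every finite-variation contribution produced by balayage is supported on $\{N=0\}$, and this set coincides with $\{X=0\}$.

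Collecting these pieces, I would write
$$X_{t}=\int_{0}^{t}K_{s}\,dN_{s}+\int_{0}^{t}{}^{p}(K_{s})\,dL^{0}_{s}(Z^{\alpha}X)+R_{t},$$
and then verify that $\int_{0}^{t}K_{s}\,dN_{s}$ is a genuine local martingale (as a stochastic integral of a bounded predictable integrand against the local martingale $N$), so that
$$X_{t}-\int_{0}^{t}{}^{p}(K_{s})\,dL^{0}_{s}(Z^{\alpha}X)-R_{t}=\int_{0}^{t}K_{s}\,dN_{s}$$
is a local martingale, which is exactly the assertion. The main obstacle I anticipate is the careful bookkeeping of the balayage formula in the progressive (as opposed to predictable) setting: one must justify replacing $K_{g_{t}}$ by its predictable projection $^{p}(K)$ against the local time, control the measurability of $K$ (which is defined through a right-limit of indicator functions and need not be predictable), and confirm that the balayage of the non-predictable integrand $K$ produces precisely the local-time correction term rather than spurious extra drift. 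Matching the appendix's version of the balayage formula to this exact form is where the real work lies; the identification of $R$ and the local-martingale property of the integral are then comparatively routine.
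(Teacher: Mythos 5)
Your setup matches the paper's: take $\alpha=\tfrac12$, so that $N=Z^{\alpha}X$ is a local martingale (Corollary~\ref{cmart}) and $X_{t}=K_{g_{t}}|N_{t}|$ (Corollary~\ref{mart}). But the central step of your argument is wrong. You apply the progressive balayage formula to $N$ itself, which yields a decomposition of $K_{g_{t}}N_{t}$, and then assert the identity
\[
X_{t}=\int_{0}^{t}K_{s}\,dN_{s}+\int_{0}^{t}{}^{p}(K_{s})\,dL^{0}_{s}(Z^{\alpha}X)+R_{t}.
\]
This identity is false, and the reason is the misconception in your sentence ``$Z^{\alpha}$ records exactly the sign $K$ of $X$ excursion by excursion.'' It does not: by construction the signs $\zeta_{n}$ defining $Z^{\alpha}$ are Bernoulli variables \emph{independent} of everything, in particular independent of the actual sign $\kappa_{n}$ of $X$ on the excursion $]g_{n},d_{n}[$. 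Consequently $K_{g_{t}}N_{t}=Z^{\alpha}_{t}|X_{t}|\neq X_{t}$ in general, and on each excursion one has $K_{s}\,dN_{s}=\kappa_{n}\zeta_{n}\,dX_{s}$, which carries the wrong sign on every excursion where $\zeta_{n}\neq\kappa_{n}$ (with $\alpha=\tfrac12$, about half of them). A concrete test: take $X=B$; then your formula forces $\langle\int_{0}^{\cdot}K\,dN,\,B\rangle_{t}=t$, whereas in fact $\langle\int_{0}^{\cdot}K\,dN,\,B\rangle_{t}=\int_{0}^{t}\kappa(s)\zeta(s)\,ds\neq t$ almost surely, since the finite-variation terms do not contribute to the bracket.

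The repair is the route the paper takes, and it is a small but essential change: apply balayage not to $N$ but to the semimartingale $|M|$, where $M=Z^{\alpha}X$. From $X_{t}=K_{g_{t}}|M_{t}|$ and the appendix balayage formula one gets $X_{t}=\int_{0}^{t}{}^{p}(K_{g_{s}})\,d|M_{s}|+R_{t}$ with $dR_{t}$ carried by $\{X=0\}$, and then Tanaka's formula expands $d|M_{s}|=\mathrm{sign}(M_{s})\,dM_{s}+dL^{0}_{s}(M)$. The factor $\mathrm{sign}(M_{s})=\zeta_{n}\kappa_{n}$ is exactly what cancels the independent random sign $\zeta_{n}$: the martingale part becomes $\int_{0}^{t}{}^{p}(K_{g_{s}})\,\mathrm{sign}(M_{s})\,dM_{s}$, whose increments on excursions are $dX_{s}$ as they must be, and the drift is $\int_{0}^{t}{}^{p}(K_{g_{s}})\,dL^{0}_{s}(M)$, which agrees with $\int_{0}^{t}{}^{p}(K_{s})\,dL^{0}_{s}(Z^{\alpha}X)$ because $g_{s}=s$ holds $dL^{0}_{s}(M)$-almost everywhere. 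Your concluding worries about measurability and bookkeeping are reasonable, but they are not where the proposal fails; it fails because balayage is applied to the wrong process, skipping the absolute value and the Tanaka step that produce the $\mathrm{sign}(M_{s})$ correction.
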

\begin{proof}
Let $X$ be a process of the class $(\Sigma)$ and take $\alpha=\frac{1}{2}$. Let us define 
$$K_{t}=\lim_{s\searrow t}\inf{(1_{X_{s}>0}-1_{X_{s}<0})}.$$
It follows from Corollary \ref{cmart} and Corollary \ref{mart} that  $M=Z^{\alpha}X$ is a local martingale and that 
$$X_{t}=K_{g_{t}}|M_{t}|.$$
Thus, according to balayage formula in the progressive case, it entails
$$X_{t}=\int_{0}^{t}{^{p}(K_{g_{s}})d|M_{s}|}+R_{t},$$
where, $R$ is a process of bounded variation, adapted, continuous such that the measure $dR_{t}$ is carried by $\{t\geq0: X_{t}=0\}$.
An application of Tanaka formula implies that
$$X_{t}=\int_{0}^{t}{^{p}(K_{g_{s}})sign(M_{s})dM_{s}}+\int_{0}^{t}{^{p}(K_{g_{s}})dL^{0}_{s}(M)}+R_{t}.$$
Therefore,
$$X_{t}-\int_{0}^{t}{^{p}(K_{g_{s}})dL^{0}_{s}(M)}-R_{t}=\int_{0}^{t}{^{p}(K_{g_{s}})sign(M_{s})dM_{s}}.$$
Consequently, $\left(X_{t}-\int_{0}^{t}{^{p}(K_{s})dL^{0}_{s}(M)}-R_{t}: t\geq0\right)$ is a local martingale.
\end{proof}

\begin{prop}\label{p4}
Let $X$ be a continuous process of the class $(\Sigma)$. Then, there exists $\alpha\in[0,1]$ such that the process 
$$\left(|X_{t}|-L^{0}_{t}(Z^{\alpha}X): t\geq0\right)$$ 
is a  continuous local martingale. 
\end{prop}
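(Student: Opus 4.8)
The plan is to exploit the explicit semimartingale decomposition of $Z^{\alpha}X$ that was already isolated in Remark \ref{o} and in Corollary \ref{cmart}. First I would recall that for $X \in (\Sigma)$ and \emph{any} $\alpha \in [0,1]$, Remark \ref{o} gives
$$Z^{\alpha}_{t}X_{t} = \int_{0}^{t}{Z^{\alpha}_{s}\,dM_{s}} + (2\alpha-1)L^{0}_{t}(Z^{\alpha}X),$$
and the special choice $\alpha = \tfrac{1}{2}$ kills the local-time term, so that $M := Z^{1/2}X$ is a continuous local martingale. This is exactly the setup already used in the proof of Proposition \ref{pabs}, and it is the natural $\alpha$ to pick here as well.

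The key identity to apply next is Tanaka's formula to $M = Z^{\alpha}X$ with $\alpha = \tfrac{1}{2}$:
$$|M_{t}| = \int_{0}^{t}{\mathrm{sign}(M_{s})\,dM_{s}} + L^{0}_{t}(M).$$
Since $|Z^{\alpha}X| = |X|$ whenever $\alpha = \tfrac12$ (because $Z^{\alpha}_{t} \in \{-1,0,1\}$ and vanishes exactly when $X_t = 0$, as established in the proof of Theorem \ref{z}), we have $|M_t| = |X_t|$, and therefore $L^{0}_{t}(M) = L^{0}_{t}(Z^{\alpha}X)$. Substituting, this rearranges to
$$|X_{t}| - L^{0}_{t}(Z^{\alpha}X) = \int_{0}^{t}{\mathrm{sign}(M_{s})\,dM_{s}},$$
and the right-hand side is a stochastic integral against a continuous local martingale with a bounded integrand, hence itself a continuous local martingale. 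This yields the claim with $\alpha = \tfrac{1}{2}$.

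I expect the only delicate point to be the justification that the local time of $X$ at level zero coincides with the local time of $M = Z^{\alpha}X$ at level zero; this rests on the equality of absolute values $|X| = |M|$ together with the fact that local time at $0$ depends only on the absolute value of the semimartingale (local time is determined by the behaviour near the zero set, and $|X| = |M|$ forces the two symmetric local times to agree). Everything else — the decomposition from Remark \ref{o}, the cancellation of the local-time term at $\alpha = \tfrac12$, and Tanaka's formula — is routine and already available in the preceding results, so the proof is essentially a short assembly of Corollary \ref{cmart} and the computation in Proposition \ref{pabs}.
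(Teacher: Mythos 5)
Your proof is correct and follows essentially the same route as the paper's: take $\alpha=\tfrac{1}{2}$, use Corollary \ref{cmart} (equivalently Remark \ref{o}) to see that $M=Z^{1/2}X$ is a continuous local martingale, apply Tanaka's formula together with $|X|=|M|$, and read off the claim. The ``delicate point'' you flag at the end is vacuous: the local time appearing in the statement is $L^{0}(Z^{\alpha}X)=L^{0}(M)$ by definition of $M$, so no comparison between $L^{0}(X)$ and $L^{0}(M)$ is ever needed.
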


\begin{proof}
Let us take $\alpha=\frac{1}{2}$. It follows from Proposition \ref{pabs} and Corollary \ref{cmart} that $M=Z^{\alpha}X$ is a local martingale  and $\forall t\geq0$,
$$|X_{t}|=|M_{t}|.$$
Hence, thanks to Tanaka formula, we have: 
$$|X_{t}|=\int_{0}^{t}{sign(M_{s})dM_{s}}+L^{0}_{t}(M).$$
Therefore,
$$|X_{t}|-L^{0}_{t}(M)=\int_{0}^{t}{sign(M_{s})dM_{s}}$$
is a local martingale.
\end{proof}
\begin{coro}
Let $X$ be a non-negative process of the class $(\Sigma)$. Then, the process 
$$\left(X_{t}-L^{0}_{t}(Z^{\alpha}X): t\geq0\right)$$ 
is a local martingale. 
\end{coro}
\begin{proof}
That is a direct consequence of Proposition \ref{p4}, since $|X_{t}|=X_{t}$.
\end{proof}





\section{Processes of the class \texorpdfstring{$(\Sigma)$}{sigma} which vanish on zeros of a Brownian motion}
{

Now, we shall apply results of the previous section to describe $(\Sigma_{B}^{0})$, the set of processes $(X_{t}: t\geq0)$ of the class $(\Sigma)$ which vanish on the zero set of a given Brownian motion $(B_{t}: t\geq0)$. 
\subsection{Characterization and properties}
\begin{defn}

Given a Brownian motion $B$, we denote by $(\Sigma_{B}^{0})$, the set of processes of the class $(\Sigma)$  which vanish on the zero set of $B$ and by $(\Sigma_{B,+}^{0})$, the set of non-negative processes of $(\Sigma_{B}^{0})$.
\end{defn}

In what follows, we give a characterization of processes of the class $(\Sigma_{B}^{0})$.
\begin{theorem}\label{th1}
$X\in$$\Sigma_{B}^{0}$ if, and only if, $X$ may be written as:
\begin{equation}
	X_{t}=K_{g_{t}}z_{\gamma_{t}}|B_{t}|\exp\left(\int_{\gamma_{t}}^{t}{u_{s}\left(dB_{s}-\frac{ds}{B_{s}}\right)-\frac{1}{2}\int_{\gamma_{t}}^{t}{u_{s}^{2}ds}}\right)
\end{equation}
where $(z_{t};t\geq0)$ and $(u_{t};t\geq0)$ are two predictable processes with respect to $(\mathcal{F}_{t})_{t\geq0}$ with suitable integrability properties. $(z_{\gamma_{t}};t\geq0)$ may be obtained as 
$$z_{\gamma_{t}}=\lim_{u\to\gamma_{t}}{\frac{|X_{u}|}{|B_{u}|}}.$$
Hence,
\begin{equation}
	|X_{t}|=z_{\gamma_{t}}|B_{t}|\exp\left(\int_{\gamma_{t}}^{t}{u_{s}\left(dB_{s}-\frac{ds}{B_{s}}\right)-\frac{1}{2}\int_{\gamma_{t}}^{t}{u_{s}^{2}ds}}\right).
\end{equation}
\end{theorem}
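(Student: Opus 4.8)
The plan is to prove both implications by reducing everything to the factorization $X_t=K_{g_t}|M_t|$ furnished by Corollary \ref{mart} and to the equivalence $X\in(\Sigma)\Leftrightarrow|X|\in(\Sigma)$ of Theorem \ref{abs}, and then to analyse the positive semimartingale $|X|=|M|$ excursion by excursion along the zero set of $B$, using Prokaj's decomposition recalled in Section 1. Throughout, the natural (augmented) filtration of $B$ is the ambient one, which is what makes the $dB_s$-integrals in the statement meaningful.

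For the forward implication, start from $X\in\Sigma_B^0$. By Corollary \ref{mart} there is a continuous local martingale $M$ with $|X|=|M|$ and $X_t=K_{g_t}|M_t|$, and the hypothesis that $X$ vanishes on $\{B=0\}$ transfers to $|M|$. Using the predictable representation property I would write $M=M_0+\int_0^\cdot\phi_s\,dB_s$. Fixing a generic excursion interval $(\gamma_t,t)$ of $B$, on its interior neither $B$ nor (by the vanishing hypothesis) $M$ meets $0$, so $\mathrm{sign}(B)$ and $\mathrm{sign}(M)$ are constant and no local-time term intervenes. Then I would apply It\^o's formula to $\log(|M_s|/|B_s|)=\log|M_s|-\log|B_s|$ on $(\gamma_t,t)$ and set
$$u_s:=\frac{\mathrm{sign}(M_s)\,\phi_s}{|M_s|}-\frac{1}{B_s}.$$
A direct computation shows that the martingale part of $d\log(|M_s|/|B_s|)$ is exactly $u_s\,dB_s$, while its drift collapses to $-u_s\,ds/B_s-\tfrac12 u_s^2\,ds$; integrating over $(\gamma_t,t)$ gives $|M_t|/|B_t|=(|M|/|B|)_{\gamma_t^+}\,\mathcal E_t$, where $\mathcal E_t$ is the displayed stochastic exponential. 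Defining $z_{\gamma_t}:=\lim_{u\to\gamma_t}|X_u|/|B_u|$ as the right-limit of the ratio at the excursion start yields $|X_t|=z_{\gamma_t}|B_t|\mathcal E_t$, and multiplying by $K_{g_t}$ produces the claimed formula.

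For the converse I would verify directly, without recovering $\phi$, that the process $X$ defined by the formula lies in $\Sigma_B^0$. Since $z_{\gamma_t}>0$ and $\mathcal E_t>0$, continuity forces $|X_t|$ to vanish precisely where $|B_t|$ does, so $X$ is continuous and vanishes on $\{B=0\}$. On each excursion of $B$ I would differentiate $|X_t|=z_{\gamma_t}|B_t|\mathcal E_t$ by It\^o: because $d\mathcal E_t=\mathcal E_t\bigl(u_t\,dB_t-u_t\,dt/B_t\bigr)$ and $|B_t|/B_t=\mathrm{sign}(B_t)$ is constant on the excursion, the two drift contributions cancel and $d|X_t|$ reduces to a pure local-martingale differential. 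Hence the only finite-variation contribution to $|X|$ is the local time $dL^0_t(B)$ accumulated at the zeros of $B$, which are zeros of $X$; thus $|X|=(\text{local martingale})+(\text{increasing process carried by }\{X=0\})$, so $|X|\in(\Sigma)$, and Theorem \ref{abs} gives $X\in(\Sigma)$ and therefore $X\in\Sigma_B^0$.

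The main obstacle is the global assembly of the excursion-wise computation into genuinely predictable processes $z$ and $u$ and a single representation valid on all of $[0,\infty)$. Three points are delicate: first, showing that the $0/0$ limit defining $z_{\gamma_t}$ exists at every excursion left-endpoint and that $t\mapsto z_{\gamma_t}$ and $u$ are $(\F_t)$-predictable with the integrability required for the stochastic integrals and exponentials; second, justifying that $X$ does not vanish in the \emph{interior} of a $B$-excursion, for otherwise $\log|M|$ would blow up and the exponential form would fail—this must be extracted from $X\in(\Sigma)$ together with the hypothesis of vanishing on $\{B=0\}$; and third, checking that the factor $K_{g_t}$ correctly records the sign of $X$ across the countable family of excursions so that the patched process is continuous. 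The use of predictable representation also tacitly fixes the ambient filtration as that of $B$, which I would state explicitly.
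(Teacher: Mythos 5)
Your opening reduction is the same as the paper's: via Theorem \ref{abs} and Proposition \ref{pabs} (equivalently Corollary \ref{mart}) one writes $|X|=|M|$, $X_t=K_{g_t}|M_t|$ for a continuous local martingale $M$ that inherits the vanishing on the zero set of $B$. But at that point the paper stops working: it invokes Theorem 3.3 of Mansuy--Yor \cite{man}, which \emph{is} the statement that a Brownian martingale vanishing on $\mathcal{Z}_1$ admits the representation $M_t=z'_{\gamma_t}B_t\exp\left(\int_{\gamma_t}^{t}u_s\left(dB_s-\frac{ds}{B_s}\right)-\frac{1}{2}\int_{\gamma_t}^{t}u_s^2\,ds\right)$, and then simply takes absolute values and multiplies by $K_{g_t}$. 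You instead attempt to re-prove that representation theorem from scratch, via predictable representation and It\^o's formula applied to $\log(|M_s|/|B_s|)$ excursion by excursion. That substitution is where your proposal breaks.

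The gap is your second ``delicate point'': you need $M$ (equivalently $X$) to be nonzero in the interior of every excursion of $B$, and you claim this ``must be extracted from $X\in(\Sigma)$ together with the hypothesis of vanishing on $\{B=0\}$''. It cannot be extracted, because it is false. Take $M_t=B_t^3-3tB_t=B_t(B_t^2-3t)$, a true Brownian martingale vanishing on $\mathcal{Z}_1$, and set $X=|M|$; by Tanaka's formula $X\in(\Sigma)$ and $X$ vanishes on $\{B=0\}$, so $X\in(\Sigma_B^0)$, yet $X$ also vanishes at the (a.s. infinitely many, by the law of the iterated logarithm near $t=0$) times where $B_t^2=3t$, which lie strictly inside excursions of $B$. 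At such times $\log|M|$ explodes, your $u_s=\mathrm{sign}(M_s)\phi_s/|M_s|-1/B_s$ fails to be locally square-integrable, and the identity $|M_t|/|B_t|=(|M|/|B|)_{\gamma_t^+}\mathcal{E}_t$ with a genuine (strictly positive) stochastic exponential cannot hold, since here $z_{\gamma_t}=3\gamma_t\neq 0$ while the left side vanishes. Handling exactly this degeneracy (the exponential factor collapsing to $0$ through divergent integrals, the ``suitable integrability properties'' of the statement) is the content of the cited Mansuy--Yor theorem; it is not recoverable by the naive logarithm argument, and only the \emph{strict} subclass treated later in the paper (Theorem \ref{tman}, Theorem \ref{t11}) excludes such interior zeros. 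Two secondary remarks: your converse direction is not in the paper at all (its proof only establishes the forward implication), and your verification is essentially the balayage computation of Proposition \ref{pm}/Corollary \ref{c6}, but it silently assumes $z_{\gamma_t}>0$ and a non-degenerate exponential, so it too only covers the strict case; and your observation that the argument forces the ambient filtration to be that of $B$ is correct and applies equally to the paper, which moreover applies \cite{man} to $M=Z^{1/2}X$, a martingale built from extra Bernoulli randomization that is not $B$-adapted --- an issue neither you nor the paper resolves.
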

\begin{proof}

We have from Theorem \ref{abs} that $|X|\in(\Sigma)$. Then, it follows thanks to Proposition \ref{pabs} there exists a local martingale $M$ such that $\forall t\geq0$,
$$|X_{t}|=|M_{t}|.$$
$M$ vanishes on $\mathcal{Z}_{1}$ since $X\in$$\Sigma_{B}^{0}$. Hence, from Theorem 3.3 of \cite{man}, $M$ may be written as 
$$M_{t}=z^{'}_{\gamma_{t}}B_{t}\exp\left(\int_{\gamma_{t}}^{t}{u_{s}\left(dB_{s}-\frac{ds}{B_{s}}\right)-\frac{1}{2}\int_{\gamma_{t}}^{t}{u_{s}^{2}ds}}\right),$$
with ${z^{'}_{\gamma_{t}}=\lim_{u\to\gamma_{t}}{\frac{M_{u}}{B_{u}}}}.$ Therefore,
$$|X_{t}|=|z^{'}_{\gamma_{t}}||B_{t}|\exp\left(\int_{\gamma_{t}}^{t}{u_{s}\left(dB_{s}-\frac{ds}{B_{s}}\right)-\frac{1}{2}\int_{\gamma_{t}}^{t}{u_{s}^{2}ds}}\right).$$
It follows by multiplying by $K_{g_{t}}$ that:
$$K_{g_{t}}|X_{t}|=K_{g_{t}}|z^{'}_{\gamma_{t}}||B_{t}|\exp\left(\int_{\gamma_{t}}^{t}{u_{s}\left(dB_{s}-\frac{ds}{B_{s}}\right)-\frac{1}{2}\int_{\gamma_{t}}^{t}{u_{s}^{2}ds}}\right).$$
Consequently,
$$X_{t}=K_{g_{t}}z_{\gamma_{t}}|B_{t}|\exp\left(\int_{\gamma_{t}}^{t}{u_{s}\left(dB_{s}-\frac{ds}{B_{s}}\right)-\frac{1}{2}\int_{\gamma_{t}}^{t}{u_{s}^{2}ds}}\right),$$
since $X_{t}=K_{g_{t}}|X_{t}|$ and by continuity of the map $x\mapsto|x|$, $z_{\gamma_{t}}=|z^{'}_{\gamma_{t}}|$.
\end{proof}

In what follows, we give a corollary of the above theorem.
\begin{coro}\label{c4}
If $X\in(\Sigma_{B,+}^{0})$, then, $X$ may be written as:
\begin{equation}
	X_{t}=z_{\gamma_{t}}|B_{t}|\exp\left(\int_{\gamma_{t}}^{t}{u_{s}\left(dB_{s}-\frac{ds}{B_{s}}\right)-\frac{1}{2}\int_{\gamma_{t}}^{t}{u_{s}^{2}ds}}\right).
\end{equation}
\end{coro}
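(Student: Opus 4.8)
The plan is to obtain this as an immediate specialization of Theorem~\ref{th1}, whose representation formula for the absolute value already contains all the work. Since every $X\in(\Sigma_{B,+}^{0})$ is in particular an element of $\Sigma_{B}^{0}$, Theorem~\ref{th1} applies verbatim and supplies
$$|X_{t}|=z_{\gamma_{t}}|B_{t}|\exp\left(\int_{\gamma_{t}}^{t}{u_{s}\left(dB_{s}-\frac{ds}{B_{s}}\right)-\frac{1}{2}\int_{\gamma_{t}}^{t}{u_{s}^{2}ds}}\right),$$
with $z_{\gamma_{t}}=\lim_{u\to\gamma_{t}}|X_{u}|/|B_{u}|$ and $u$ a suitable predictable process.

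The decisive step is to invoke the standing hypothesis that $X$ is non-negative. This forces $X_{t}=|X_{t}|$ for every $t\geq0$, so substituting the displayed expression for $|X_{t}|$ yields the claimed identity directly, with no further analysis of the sign process $K$ needed. In effect, the factor $K_{g_{t}}$ appearing in the general formula of Theorem~\ref{th1} is removed precisely because positivity of $X$ makes the passage $X_{t}=K_{g_{t}}|X_{t}|$ collapse to $X_{t}=|X_{t}|$.

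If one prefers to argue from the general formula $X_{t}=K_{g_{t}}z_{\gamma_{t}}|B_{t}|\exp(\cdots)$ instead, the same conclusion follows by checking that $K_{g_{t}}=1$ on $\{X_{t}\neq0\}$: non-negativity makes $1_{X_{s}<0}$ vanish identically, so $K_{g_{t}}\in\{0,1\}$, and on $\{X_{t}\neq0\}$ the identity $X_{t}=K_{g_{t}}|X_{t}|$ pins down $K_{g_{t}}=1$, while on $\{X_{t}=0\}$ both sides of the target equation are zero. I expect the absolute-value route to be the shorter one, and there is no genuine obstacle in either case, since all the analytic content has already been absorbed into Theorem~\ref{th1}; the corollary is just its non-negative specialization.
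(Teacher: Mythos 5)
Your proposal is correct and takes essentially the same approach as the paper, whose entire proof is the one-line observation that $K_{g_{t}}=1$ because $X$ is non-negative; your primary route via $X_{t}=|X_{t}|$ and the second display of Theorem~\ref{th1} is the same specialization, and your secondary route is literally the paper's argument, stated with slightly more care on the set $\{X_{t}=0\}$.
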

\begin{proof}
It is enough to see that  $K_{g_{t}}=1$ since $X$ is a non-negative process.
\end{proof}

\begin{proof}
That is a direct application of Theorem \ref{abs} and Corollary \ref{c4}. 
\end{proof}

\begin{prop}\label{pm}
Let $X$ be a continuous process of the class $(\Sigma)$. If $X$ vanishes on $\mathcal{Z}=\{t\leq1: B_{t}=0\}$, then $$\left(|X_{t}|-\int_{0}^{t}{z_{s}dL_{s}^{0}(B)};t\geq0\right)$$ is a local martingale.
\end{prop}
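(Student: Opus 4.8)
The plan is to reduce the statement to a single identity between increasing processes, namely $L^0_t(M)=\int_0^t z_s\,dL^0_s(B)$ for a suitable local martingale $M$ with $|M|=|X|$, and then to read this identity off from the representation of Theorem~\ref{th1}. First I would invoke Proposition~\ref{pabs} (equivalently Corollary~\ref{cmart} with $\alpha=\tfrac12$) to produce a continuous local martingale $M=Z^{\alpha}X$ satisfying $|X_t|=|M_t|$; since $X\in\Sigma_B^0$, this $M$ vanishes on the zero set of $B$. By Proposition~\ref{p4} the process $|X_t|-L^0_t(M)$ is already a local martingale, so the whole proposition will follow once I show that the increasing process $L^0_t(M)$ coincides with $\int_0^t z_s\,dL^0_s(B)$.

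To identify $L^0(M)$ I would use the representation of Theorem~\ref{th1}, writing $|M_t|=|X_t|=z_{\gamma_t}|B_t|\,\mathcal{E}_t$, where $\mathcal{E}_t=\exp\!\big(\int_{\gamma_t}^t u_s(dB_s-\tfrac{ds}{B_s})-\tfrac12\int_{\gamma_t}^t u_s^2\,ds\big)$ is continuous, strictly positive, and equal to $1$ on $\{B=0\}$ (where $\gamma_s=s$). It is convenient to isolate the genuine semimartingale $N_t:=B_t\mathcal{E}_t$, which is a continuous local martingale (the instance $z'\equiv1$ of Theorem~3.3 of \cite{man}) having the same zero set as $B$, with $|N_t|=\mathcal{E}_t|B_t|$, so that $|M_t|=z_{\gamma_t}|N_t|$.

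The computation then proceeds in two steps. Since $z$ is predictable, the balayage formula in the progressive case applies to the frozen weight $z_{\gamma_t}$ and the semimartingale $|N|$ without remainder, giving $z_{\gamma_t}|N_t|=\int_0^t z_{\gamma_s}\,d|N_s|$; combining this with Tanaka's formula $d|N_s|=sign(N_s)\,dN_s+dL^0_s(N)$ yields $|M_t|=\int_0^t z_{\gamma_s}sign(N_s)\,dN_s+\int_0^t z_{\gamma_s}\,dL^0_s(N)$, in which the first term is a local martingale. On the support of $dL^0(N)$, which is exactly $\{B=0\}$, one has $\gamma_s=s$ and hence $z_{\gamma_s}=z_s$, so it remains only to prove $L^0(N)=L^0(B)$. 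For this I would apply Itô's and Tanaka's formulas to $\mathcal{E}_t|B_t|$ inside each excursion interval of $B$ (where every factor is an honest continuous semimartingale and $z_\gamma$ is constant): the drift contributions from $d\mathcal{E}$ and from $d\langle\mathcal{E},|B|\rangle$ cancel precisely because $N$ is a local martingale, leaving $dL^0_s(N)=\mathcal{E}_s\,dL^0_s(B)$, which equals $dL^0_s(B)$ since $\mathcal{E}_s=1$ on $\{B=0\}$. Assembling these facts gives $\int_0^t z_{\gamma_s}\,dL^0_s(N)=\int_0^t z_s\,dL^0_s(B)$, whence $|X_t|-\int_0^t z_s\,dL^0_s(B)$ is a local martingale, as claimed.

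The delicate point, which I expect to be the main obstacle, is the local-time identity $L^0(N)=L^0(B)$ together with the legitimacy of the weighting by $z_{\gamma_t}$. The weight $z_{\gamma_t}$ is in general not a semimartingale (it jumps across the countably many excursions of $B$), so a naive product rule on $z_{\gamma_t}\mathcal{E}_t|B_t|$ is unavailable; this is exactly why the balayage formula for predictable integrands must be used to handle the frozen weight, and why the comparison of local times has to be localized to the excursion intervals, where $\mathcal{E}\to1$ at the endpoints. Once the scaling factor at the zeros is correctly pinned to the value $z_s$ (through $\mathcal{E}=1$ and $\gamma_s=s$ on $\{B=0\}$), the conclusion follows.
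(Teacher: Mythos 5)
Your overall strategy is sound and, at bottom, parallel to the paper's own proof: the paper also starts from the representation $|X_t|=z_{\gamma_t}|B_t|e^{W_t}$ of Theorem \ref{th1}, applies the balayage formula to the frozen weight $z_{\gamma_t}$, and identifies the finite-variation part as $\int_0^t z_s\,dL^0_s(B)$ using that $\gamma_s=s$ and $e^{W_s}=1$ hold $dL^0_s(B)$-almost everywhere. The difference is one of packaging: the paper applies balayage to $Y_t=z_{\gamma_t}|B_t|$ and then runs a single global It\^o computation on the product $Y_te^{W_t}$, whereas you absorb the exponential into an auxiliary local martingale $N_t=B_t\mathcal{E}_t$, apply balayage to $z_{\gamma_t}|N_t|$, and reduce everything to the local-time identity $L^0(N)=L^0(B)$. (Your detour through $M=Z^{1/2}X$ and Proposition \ref{p4} is harmless but not needed; the balayage--Tanaka step already produces the decomposition.)

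The genuine gap is your proof of that key identity. You propose to obtain $dL^0_s(N)=\mathcal{E}_s\,dL^0_s(B)$ by applying It\^o and Tanaka \emph{inside each excursion interval} of $B$. But both $dL^0(N)$ and $dL^0(B)$ are measures carried by the zero set $\{B=0\}$, and this set --- apart from the countably many excursion endpoints, which carry no local time since $L^0(B)$ is continuous --- is disjoint from the union of the closed excursion intervals. Inside any excursion interval both local times are constant, so a computation localized there proves only $0=0$; the local time accrues exactly on the part of $\{B=0\}$ that your localization never reaches. The identity is true, but it must be proved globally, for instance by comparing the product rule $d|N|=\mathcal{E}\,d|B|+|B|\,d\mathcal{E}+d\langle\mathcal{E},|B|\rangle$ with Tanaka's formula $d|N|=\mathrm{sgn}(N)\,dN+dL^0(N)$: since $\mathrm{sgn}(N)=\mathrm{sgn}(B)$ and $\mathrm{sgn}(B)\,d\langle\mathcal{E},B\rangle=d\langle\mathcal{E},|B|\rangle$, all terms cancel except $dL^0(N)=\mathcal{E}\,dL^0(B)$. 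Note that this has nothing to do with $N$ being a local martingale (contrary to your claimed "cancellation because $N$ is a local martingale"); what it does require is treating $\mathcal{E}=e^{W}$ as a global semimartingale, which is the same implicit assumption the paper makes when it applies It\^o's formula to $Y_te^{W_t}$. Alternatively, run the paper's computation with $z\equiv1$ and invoke uniqueness of the Doob--Meyer decomposition of the submartingale $|N|$. With that step repaired, the rest of your argument (balayage with the predictable weight, and $z_{\gamma_s}=z_s$ on the support of $dL^0(B)$) goes through.
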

\begin{proof}
We have from Corollary \ref{th1}, there exist two bounded predictable processes $z$ and $u$ such:
$$|X_{t}|=z_{\gamma_{t}}|B_{t}|\exp\left(\int_{\gamma_{t}}^{t}{u_{s}\left(dB_{s}-\frac{ds}{B_{s}}\right)-\frac{1}{2}\int_{\gamma_{t}}^{t}{u_{s}^{2}ds}}\right).$$
Let $Y_{t}=z_{\gamma_{t}}|B_{t}|$ and $$W_{t}=\int_{\gamma_{t}}^{t}{u_{s}\left(dB_{s}-\frac{ds}{B_{s}}\right)-\frac{1}{2}\int_{\gamma_{t}}^{t}{u_{s}^{2}ds}}=\int_{0}^{t}{u_{s}1_{]\gamma_{t};t]}\left(dB_{s}-\frac{ds}{B_{s}}\right)-\frac{1}{2}\int_{0}^{t}{u_{s}^{2}1_{]\gamma_{t};t]}ds}}.$$
An application of the balayage formula implies what follows
$$z_{\gamma_{t}}|B_{t}|=\int_{0}^{t}{z_{\gamma_{s}}d|B_{s}|}=\int_{0}^{t}{z_{\gamma_{s}}{\rm sgn}(B_{s})dB_{s}}+\int_{0}^{t}{z_{\gamma_{s}}dL_{s}^{0}(B)}.$$
But, $\gamma_{s}=s$, $dL_{s}^{0}(B)-$ almost surely. Thus,
$$Y_{t}=z_{\gamma_{t}}|B_{t}|=\int_{0}^{t}{z_{\gamma_{s}}{\rm sgn}(B_{s})dB_{s}}+\int_{0}^{t}{z_{s}dL_{s}^{0}(B)}.$$
Now, we shall apply the Ito formula on $|X|=Ye^{W}$. One has $\forall t\geq0$,
\begin{equation}\label{e1}
	|X_{t}|=\int_{0}^{t}{Y_{s}e^{W_{s}}dW_{s}}+\int_{0}^{t}{e^{W_{s}}dY_{s}}+\frac{1}{2}\int_{0}^{t}{e^{W_{s}}d\langle W,Y\rangle_{s}}+\frac{1}{2}\int_{0}^{t}{e^{W_{s}}d\langle W,Y\rangle_{s}}+\frac{1}{2}\int_{0}^{t}{Y_{s}e^{W_{s}}d\langle W\rangle_{s}}.
\end{equation}
Hence,
$$|X_{t}|=\int_{0}^{t}{Y_{s}e^{W_{s}}dW_{s}}+\int_{0}^{t}{e^{W_{s}}dY_{s}}+\int_{0}^{t}{e^{W_{s}}d\langle W,Y\rangle_{s}}+\frac{1}{2}\int_{0}^{t}{Y_{s}e^{W_{s}}d\langle W\rangle_{s}}.$$
But remark that:
$$\int_{0}^{t}{Y_{s}e^{W_{s}}dW_{s}}=\int_{\gamma_{t}}^{t}{Y_{s}e^{W_{s}}\left[u_{s}\left(dB_{s}-\frac{ds}{B_{s}}\right)-\frac{1}{2}u_{s}^{2}ds\right]}.$$
We have more precisely,
$$\int_{0}^{t}{Y_{s}e^{W_{s}}dW_{s}}=\int_{\gamma_{t}}^{t}{z_{\gamma_{s}}|B_{s}|e^{W_{s}}u_{s}dB_{s}}-\int_{\gamma_{t}}^{t}{u_{s}z_{\gamma_{s}}\frac{|B_{s}|}{B_{s}}e^{W_{s}}ds}-\frac{1}{2}\int_{\gamma_{t}}^{t}{z_{\gamma_{s}}|B_{s}|u_{s}^{2}e^{W_{s}}ds}$$
$$\hspace{3cm}=\int_{\gamma_{t}}^{t}{z_{\gamma_{s}}|B_{s}|e^{W_{s}}u_{s}dB_{s}}-\int_{\gamma_{t}}^{t}{u_{s}z_{\gamma_{s}}{\rm sgn}(B_{s})e^{W_{s}}ds}-\frac{1}{2}\int_{\gamma_{t}}^{t}{z_{\gamma_{s}}|B_{s}|u_{s}^{2}e^{W_{s}}ds}.$$ 
On the other hand, it is easy to verify that:
$$\int_{0}^{t}{e^{W_{s}}d\langle W,Y\rangle_{s}}+\frac{1}{2}\int_{0}^{t}{Y_{s}e^{W_{s}}d\langle W\rangle_{s}}=\int_{\gamma_{t}}^{t}{u_{s}z_{\gamma_{s}}{\rm sgn}(B_{s})e^{W_{s}}ds}+\frac{1}{2}\int_{\gamma_{t}}^{t}{z_{\gamma_{s}}|B_{s}|u_{s}^{2}e^{W_{s}}ds}.$$ 
Then, we obtain from equation \eqref{e1}
$$|X_{t}|=\int_{\gamma_{t}}^{t}{z_{\gamma_{s}}|B_{s}|e^{W_{s}}u_{s}dB_{s}}+\int_{0}^{t}{e^{W_{s}}dY_{s}}$$
$$\hspace{5.5cm}=\int_{\gamma_{t}}^{t}{z_{\gamma_{s}}|B_{s}|e^{W_{s}}u_{s}dB_{s}}+\int_{0}^{t}{e^{W_{s}}z_{\gamma_{s}}{\rm sgn}(B_{s})dB_{s}}+\int_{0}^{t}{e^{W_{s}}z_{s}dL^{0}_{s}(B)}.$$
Recall that $\gamma_{s}=s$, $dL^{0}_{s}(B)$ - almost surely. That implies that $e^{W_{s}}=e^{W_{\gamma_{s}}}=1$, $dL^{0}_{s}(B)$ - almost surely. Consequently,
$$|X_{t}|=\int_{\gamma_{t}}^{t}{z_{\gamma_{s}}|B_{s}|e^{W_{s}}u_{s}dB_{s}}+\int_{0}^{t}{e^{W_{s}}dY_{s}}$$
$$\hspace{5.25cm}=\int_{\gamma_{t}}^{t}{z_{\gamma_{s}}|B_{s}|e^{W_{s}}u_{s}dB_{s}}+\int_{0}^{t}{e^{W_{s}}z_{\gamma_{s}}{\rm sgn}(B_{s})dB_{s}}+\int_{0}^{t}{z_{s}dL^{0}_{s}(B)}.$$
This implies that $\left(|X_{t}|-\int_{0}^{t}{z_{s}dL_{s}^{0}(B)};t\geq0\right)$ is a local martingale.
\end{proof}

Now, we shall give some corollaries of Proposition \ref{pm}.

\begin{coro}\label{c6}
If $X$ is a process of the class $(\Sigma)$ which vanishes on $\mathcal{Z}_{1}$. Then the Doob decomposition of the sub-martingale $|X|$ is given by the next formula
$$|X_{t}|=\int_{0}^{t}{z_{\gamma_{s}}{\rm sgn}(B_{s})e^{W_{s}}\left(1+B_{s}u_{s}1_{]\gamma_{t};t]}(s)\right)dB_{s}}+\int_{0}^{t}{z_{s}dL_{s}^{0}(B)},$$
with $$W_{t}=\int_{\gamma_{t}}^{t}{u_{s}\left(dB_{s}-\frac{ds}{B_{s}}\right)-\frac{1}{2}\int_{\gamma_{t}}^{t}{u_{s}^{2}ds}}.$$
\end{coro}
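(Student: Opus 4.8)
The plan is to read the result off directly from the computation already carried out in the proof of Proposition~\ref{pm} and then reorganize the two stochastic integrals into the single integrand displayed in the statement. The proof of Proposition~\ref{pm} does not merely show that $|X_t|-\int_0^t z_s\,dL_s^0(B)$ is a local martingale; along the way it produces the explicit identity
$$|X_{t}|=\int_{\gamma_{t}}^{t}{z_{\gamma_{s}}|B_{s}|e^{W_{s}}u_{s}\,dB_{s}}+\int_{0}^{t}{e^{W_{s}}z_{\gamma_{s}}{\rm sgn}(B_{s})\,dB_{s}}+\int_{0}^{t}{z_{s}\,dL^{0}_{s}(B)},$$
with $W$ as in the statement. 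This identity is the whole substance of the corollary, so I would simply take it as the starting point.

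Next I would merge the first two terms. Writing $|B_s|={\rm sgn}(B_s)B_s$ in the first integral and rewriting $\int_{\gamma_t}^t$ as $\int_0^t(\cdot)\,1_{]\gamma_t;t]}(s)$ turns it into $\int_0^t z_{\gamma_s}{\rm sgn}(B_s)B_s e^{W_s}u_s 1_{]\gamma_t;t]}(s)\,dB_s$, whose integrand shares the common factor $z_{\gamma_s}{\rm sgn}(B_s)e^{W_s}$ with the second integral. Factoring this out yields exactly
$$\int_{0}^{t}{z_{\gamma_{s}}{\rm sgn}(B_{s})e^{W_{s}}\bigl(1+B_{s}u_{s}1_{]\gamma_{t};t]}(s)\bigr)\,dB_{s}},$$
which is the martingale term of the claimed decomposition; the remaining $\int_0^t z_s\,dL_s^0(B)$ is untouched.

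Finally I would argue that this is genuinely the Doob decomposition, not just an arbitrary semimartingale splitting. Proposition~\ref{pm} already identifies the stochastic-integral part as a continuous local martingale, so it suffices to check that $\int_0^t z_s\,dL_s^0(B)$ is a continuous, adapted, non-decreasing process. Continuity and adaptedness are clear; monotonicity follows because $L^0(B)$ is non-decreasing and $z_{\gamma_s}=\lim_{u\to\gamma_s}|X_u|/|B_u|\ge 0$ as a limit of non-negative ratios, while $z_s=z_{\gamma_s}$, $dL^0_s(B)$-almost surely (the relation $\gamma_s=s$ on the support of $dL^0(B)$ was already used in Proposition~\ref{pm}). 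Uniqueness of the Doob decomposition of the submartingale $|X|$, which is of class $(\Sigma)$ by Theorem~\ref{abs}, then pins down both parts.

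The computation itself is purely bookkeeping; the only points demanding care are the handling of the indicator $1_{]\gamma_t;t]}$ when splitting the time integral, and the verification that the finite-variation part is non-decreasing so that the splitting truly qualifies as the Doob decomposition rather than a generic semimartingale decomposition.
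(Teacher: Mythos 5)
Your proof is correct and takes essentially the same route as the paper, which states Corollary~\ref{c6} without a separate proof precisely because the identity
$|X_{t}|=\int_{\gamma_{t}}^{t}{z_{\gamma_{s}}|B_{s}|e^{W_{s}}u_{s}\,dB_{s}}+\int_{0}^{t}{e^{W_{s}}z_{\gamma_{s}}{\rm sgn}(B_{s})\,dB_{s}}+\int_{0}^{t}{z_{s}\,dL^{0}_{s}(B)}$
is already established inside the proof of Proposition~\ref{pm}, and merging the two stochastic integrals via $|B_{s}|={\rm sgn}(B_{s})B_{s}$ yields the stated integrand. Your extra verification that $\int_{0}^{t}{z_{s}\,dL_{s}^{0}(B)}$ is non-decreasing (using $z\geq0$ and $\gamma_{s}=s$, $dL_{s}^{0}(B)$-a.s.), so that the splitting is genuinely the Doob decomposition, is a detail the paper leaves implicit and is a welcome addition.
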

\begin{coro}\label{ccc6}
Let $X$ be a process of the class $(\Sigma)$ vanishing on $\mathcal{Z}_{1}$ and $(z_{t};t\geq0)$ its associated bounded predictable process. If $(z_{\gamma_{t}};t\geq0)$ vanishes on $\mathcal{Z}_{1}$. Then, $(|X_{t}|;t\geq0)$ is a local martingale.
\end{coro}
\begin{proof}
According to Corollary \ref{c6}, one has
$$|X_{t}|=\int_{0}^{t}{z_{\gamma_{s}}{\rm sgn}(B_{s})e^{W_{s}}\left(1+B_{s}u_{s}1_{]\gamma_{t};t]}(s)\right)dB_{s}}+\int_{0}^{t}{z_{s}dL_{s}^{0}(B)}.$$
But, 
$$\int_{0}^{t}{z_{s}dL_{s}^{0}(B)}=\int_{0}^{t}{z_{\gamma_{s}}dL_{s}^{0}(B)}=\int_{0}^{t}{z_{\gamma_{s}}1_{\{s\geq0;B_{s}=0\}}dL_{s}^{0}(B)}$$
because $dL_{s}^{0}(B)$ is carried by $\mathcal{Z}_{1}$. Therefore,
$$\int_{0}^{t}{z_{s}dL_{s}^{0}(B)}=0$$
since $(z_{\gamma_{t}};t\geq0)$ vanishes on $\mathcal{Z}_{1}$. Consequently, $(|X_{t}|;t\geq0)$ is a local martingale.
\end{proof}
\begin{coro}\label{cc6}
Let $X$ be a process of the class $(\Sigma)$. If $X$ vanishes on $\mathcal{Z}=\{t\leq1: B_{t}=0\}$, then $$\left(|X_{t}|-z_{\gamma_{t}}|B_{t}|;t\geq0\right)$$ is a local martingale.
\end{coro}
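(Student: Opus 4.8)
The plan is to combine the local martingale property established in Proposition \ref{pm} with the balayage identity for $z_{\gamma_{t}}|B_{t}|$ that was already derived inside its proof. First I would recall from Proposition \ref{pm} that the process
$$\left(|X_{t}|-\int_{0}^{t}{z_{s}dL_{s}^{0}(B)};t\geq0\right)$$
is a local martingale. Next I would invoke the expansion of $Y_{t}=z_{\gamma_{t}}|B_{t}|$ obtained via the balayage formula in the proof of Proposition \ref{pm}, namely
$$z_{\gamma_{t}}|B_{t}|=\int_{0}^{t}{z_{\gamma_{s}}{\rm sgn}(B_{s})dB_{s}}+\int_{0}^{t}{z_{s}dL_{s}^{0}(B)},$$
where I have used that $\gamma_{s}=s$ holds $dL_{s}^{0}(B)$-almost surely to rewrite $z_{\gamma_{s}}$ as $z_{s}$ in the local-time integral.

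The decisive step is then a single subtraction. Combining the two displays above gives
$$|X_{t}|-z_{\gamma_{t}}|B_{t}|=\left(|X_{t}|-\int_{0}^{t}{z_{s}dL_{s}^{0}(B)}\right)-\int_{0}^{t}{z_{\gamma_{s}}{\rm sgn}(B_{s})dB_{s}}.$$
The first parenthesis is a local martingale by Proposition \ref{pm}, while the second term is the stochastic integral of the bounded predictable integrand $z_{\gamma_{s}}{\rm sgn}(B_{s})$ against the Brownian motion $B$, hence itself a continuous local martingale. Since a difference of local martingales is a local martingale, the conclusion follows immediately.

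The closest thing to an obstacle is ensuring that the local-time integral $\int_{0}^{t}{z_{s}dL_{s}^{0}(B)}$ appearing in Proposition \ref{pm} is exactly the same one produced by the balayage expansion of $z_{\gamma_{t}}|B_{t}|$, so that the two local-time contributions cancel and only Brownian stochastic integrals remain; this is precisely the identity $\gamma_{s}=s$, $dL_{s}^{0}(B)$-almost surely, already exploited in the earlier proof. The boundedness of the predictable process $z$ then guarantees that the surviving stochastic integral is a genuine continuous local martingale, so no further integrability argument is required.
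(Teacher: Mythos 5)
Your proof is correct and takes essentially the same approach as the paper: the paper invokes Corollary \ref{c6} (which is just the explicit form of the decomposition behind Proposition \ref{pm}) together with the same balayage identity for $z_{\gamma_{t}}|B_{t}|$, and then subtracts so that the two local-time integrals $\int_{0}^{t}{z_{s}dL_{s}^{0}(B)}$ cancel, leaving a local martingale expressed as a difference of stochastic integrals against $dB$. Your only deviation is that you keep the martingale part of $|X|$ abstract via Proposition \ref{pm} instead of writing out its explicit integrand, which is immaterial.
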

\begin{proof}
We can say thanks to Corollary \ref{c6} that
$$|X_{t}|=\int_{0}^{t}{z_{\gamma_{s}}{\rm sgn}(B_{s})e^{W_{s}}\left(1+B_{s}u_{s}1_{]\gamma_{t};t]}(s)\right)dB_{s}}+\int_{0}^{t}{z_{s}dL_{s}^{0}(B)}.$$
On another hand, we have from balayage formula what follows
$$z_{\gamma_{t}}|B_{t}|=\int_{0}^{t}{z_{\gamma_{s}}{\rm sgn}(B_{s})dB_{s}}+\int_{0}^{t}{z_{s}dL_{s}^{0}(B)}.$$
That implies that
$$|X_{t}|-z_{\gamma_{t}}|B_{t}|=\int_{0}^{t}{z_{\gamma_{s}}{\rm sgn}(B_{s})e^{W_{s}}\left(1+B_{s}u_{s}1_{]\gamma_{t};t]}(s)\right)dB_{s}}-\int_{0}^{t}{{z_{\gamma_{s}}{\rm sgn}(B_{s})dB_{s}}}$$
$$\hspace{-0.2cm}=\int_{0}^{t}{z_{\gamma_{s}}{\rm sgn}(B_{s})\left(e^{W_{s}}(1+B_{s}u_{s}1_{]\gamma_{t};t]}(s))-1\right)dB_{s}}.$$
\end{proof}

\begin{coro}\label{c8}
If $X$ is a process of the class $(\Sigma)$ which vanishes on $\mathcal{Z}_{1}$. Then, there exists a process $R$ of bounded variation, adapted, continuous such that the measure $dR_{t}$ is carried by $\{t\geq0: X_{t}=0\}$ and the process 
$$\left(X_{t}-\int_{0}^{t}{^{p}(K_{s})z_{s}dL^{0}_{s}(B)}-R_{t}: t\geq0\right)$$ 
is a local martingale. 
\end{coro}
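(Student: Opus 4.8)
The plan is to reproduce the blueprint of the proof of Corollary \ref{p3}, but to feed into it the Brownian-local-time decomposition of $|X|$ supplied by Proposition \ref{pm} in place of the one produced there by Tanaka's formula. First I would invoke Proposition \ref{pm}: since $X$ is of class $(\Sigma)$ and vanishes on $\mathcal{Z}_{1}$, the process $N_{t}:=|X_{t}|-\int_{0}^{t}{z_{s}\,dL^{0}_{s}(B)}$ is a continuous local martingale, where $z$ is the predictable process attached to $X$ by Theorem \ref{th1}. Equivalently, $|X|$ is a continuous semimartingale vanishing at $0$ whose canonical decomposition reads $d|X_{s}|=dN_{s}+z_{s}\,dL^{0}_{s}(B)$.

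Next I would use the pointwise identity $X_{t}=K_{g_{t}}|X_{t}|$ and apply the balayage formula in the progressive case to the semimartingale $|X|$ with the process $K$, exactly as in Corollary \ref{p3}. This yields
$$X_{t}=\int_{0}^{t}{^{p}(K_{g_{s}})\,d|X_{s}|}+R_{t},$$
where $R$ is an adapted, continuous, finite-variation process whose measure $dR_{t}$ is carried by $\{t\geq0:|X_{t}|=0\}=\{t\geq0:X_{t}=0\}$; this is precisely the process $R$ claimed in the statement. Substituting the decomposition of $|X|$ then gives
$$X_{t}=\int_{0}^{t}{^{p}(K_{g_{s}})\,dN_{s}}+\int_{0}^{t}{^{p}(K_{g_{s}})z_{s}\,dL^{0}_{s}(B)}+R_{t},$$
and the first integral is a local martingale, being the integral of a bounded predictable process against the local martingale $N$.

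It then remains to replace ${}^{p}(K_{g_{s}})$ by ${}^{p}(K_{s})$ inside the local-time integral. The key observation is that $dL^{0}_{s}(B)$ is carried by $\mathcal{Z}_{1}=\{s:B_{s}=0\}$, and on this set $X_{s}=0$ because $X$ vanishes on the zeros of $B$; hence $g_{s}=\sup\{r\leq s:X_{r}=0\}=s$, so that $K_{g_{s}}=K_{s}$, $dL^{0}_{s}(B)$-almost surely. Since $\{B=0\}$ is a predictable (closed) set and $L^{0}(B)$ is a predictable increasing process, this equality passes to the predictable projections integrated against $dL^{0}(B)$, giving $\int_{0}^{t}{^{p}(K_{g_{s}})z_{s}\,dL^{0}_{s}(B)}=\int_{0}^{t}{^{p}(K_{s})z_{s}\,dL^{0}_{s}(B)}$. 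Combining everything, $X_{t}-\int_{0}^{t}{^{p}(K_{s})z_{s}\,dL^{0}_{s}(B)}-R_{t}=\int_{0}^{t}{^{p}(K_{g_{s}})\,dN_{s}}$ is a local martingale, which is the assertion.

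The hard part will be the careful justification of this last replacement, namely the identification of the two predictable projections ${}^{p}(K_{g_{s}})$ and ${}^{p}(K_{s})$ on the support of the Brownian local time. The substitution $g_{s}=s$ on $\{B_{s}=0\}$ is intuitively transparent, but it genuinely relies both on the vanishing hypothesis $X\in(\Sigma^{0}_{B})$ and on the predictability of the zero set; this is the only delicate point, since the rest amounts to the same algebraic bookkeeping between the balayage and It\^o decompositions already performed in Corollaries \ref{p3} and \ref{c6}.
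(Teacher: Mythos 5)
Your proof is correct, but it is organized differently from the paper's. The paper's proof is a two-line reduction: it quotes Corollary \ref{p3} (which already yields that $X_{t}-\int_{0}^{t}{^{p}(K_{s})\,dL^{0}_{s}(M)}-R_{t}$ is a local martingale, where $M=Z^{\alpha}X$ with $\alpha=\frac{1}{2}$), and then converts the measure $dL^{0}_{s}(M)$ into $z_{s}\,dL^{0}_{s}(B)$ by combining Proposition \ref{pabs} ($|X|=|M|$) with Corollary \ref{c6}, i.e.\ by identifying the finite-variation parts of the two decompositions of $|X|$. You instead bypass $M$ and Corollary \ref{p3} altogether: you take the decomposition $|X|=N+\int z\,dL^{0}(B)$ from Proposition \ref{pm} and replay the balayage argument of Corollary \ref{p3} directly on $|X|$, using $X_{t}=K_{g_{t}}|X_{t}|$. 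At bottom the two routes are the same computation (progressive balayage plus a semimartingale decomposition of $|X|$); yours is self-contained and avoids the measure identification $dL^{0}_{s}(M)=z_{s}\,dL^{0}_{s}(B)$, while the paper's is shorter given its earlier results. On the step you flag as delicate --- replacing ${}^{p}(K_{g_{s}})$ by ${}^{p}(K_{s})$ under $dL^{0}_{s}(B)$ --- your justification is the right one and can be made precise: since $X$ vanishes on the zeros of $B$, one has $g_{s}=s$, hence $K_{g_{s}}=K_{s}$, on the support of $dL^{0}_{s}(B)$, so the finite-variation processes $\int{K_{g_{s}}z_{s}\,dL^{0}_{s}(B)}$ and $\int{K_{s}z_{s}\,dL^{0}_{s}(B)}$ are indistinguishable; as $L^{0}(B)$ is continuous and adapted, hence predictable, the dual predictable projection of $U\cdot L^{0}(B)$ is $({}^{p}U)\cdot L^{0}(B)$ for bounded measurable $U$, and dual projections of indistinguishable processes coincide, giving the desired equality of the two projected integrals. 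It is worth noting that the paper performs exactly this substitution silently, both inside the proof of Corollary \ref{p3} (whose derivation produces ${}^{p}(K_{g_{s}})$ while its statement carries ${}^{p}(K_{s})$) and in Corollary \ref{c10}, so your treatment is, if anything, more careful on this point.
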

\begin{proof}
We have from Corollary \ref{p3} that there exists a process $R$ of bounded variation, adapted, continuous such that the measure $dR_{t}$ is carried by $\{t\geq0: X_{t}=0\}$ such that the process 
$$\left(X_{t}-\int_{0}^{t}{^{p}(K_{s})dL^{0}_{s}(M)}-R_{t}: t\geq0\right)$$ 
is a local martingale. But, according to Proposition \ref{pabs}, $|X|=|M|$. Thus, we have from Corollary \ref{c6} that
$$dL^{0}_{s}(M)=z_{s}dL^{0}_{s}(B).$$
Therefore,
$$\left(X_{t}-\int_{0}^{t}{^{p}(K_{s})z_{s}dL^{0}_{s}(B)}-R_{t}: t\geq0\right)$$ 
is a local martingale.
\end{proof}

\begin{prop}\label{p7}
Let $X$ be a process of the class $(\Sigma_{B}^{0})$ and $f$, a locally bounded Borel function. Define
$$F(z_{t};L_{t}^{0}(B))=\int_{0}^{t}{z_{s}f(L_{s}^{0}(B))dL_{s}^{0}(B)}.$$
Then, the process $\left(f(L_{s}^{0}(B))|X_{t}|-F(z_{t};L_{t}^{0}(B)); t\geq0\right)$ is a local martingale.
\end{prop}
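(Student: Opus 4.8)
The plan is to reduce the statement to the local martingale already produced in Proposition \ref{pm} and then to integrate $f$ of the local time against it. Write $\ell_t=L_t^0(B)$, so that by Proposition \ref{pm} the process $N_t:=|X_t|-\int_0^t z_s\,d\ell_s$ is a continuous local martingale and $d|X_t|=dN_t+z_t\,d\ell_t$. The whole argument rests on a single observation: $|X|$ vanishes wherever $d\ell$ charges. Indeed $d\ell_t=dL_t^0(B)$ is carried by $\{t\ge 0:B_t=0\}$, and since $X\in\Sigma_B^0$ we have $|X_t|=0$ on that set; consequently $\int_0^t |X_s|\,\mu(ds)=0$ for every random measure $\mu$ that is absolutely continuous with respect to $d\ell$.

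First I would treat a smooth $f$, say $f\in C^1$. Then $C_t:=f(\ell_t)$ is a continuous finite variation process with $dC_t=f'(\ell_t)\,d\ell_t$, so its covariation with the semimartingale $|X|$ vanishes and integration by parts gives
$$C_t|X_t|=\int_0^t C_s\,d|X_s|+\int_0^t |X_s|\,dC_s.$$
The second integral is zero by the observation above, because $dC_t$ is carried by $\{B=0\}$. Substituting $d|X_s|=dN_s+z_s\,d\ell_s$ and $C_s=f(\ell_s)$ into the first integral yields
$$f(\ell_t)|X_t|=\int_0^t f(\ell_s)\,dN_s+\int_0^t z_s f(\ell_s)\,d\ell_s,$$
that is, $f(\ell_t)|X_t|-F(z_t;L_t^0(B))=\int_0^t f(\ell_s)\,dN_s$. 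Since $f$ is locally bounded and $\ell$ is continuous, the integrand $f(\ell_s)$ is locally bounded and predictable, so the right-hand side is a local martingale; this is exactly the claim for $C^1$ functions.

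To pass to an arbitrary locally bounded Borel $f$, I would argue that the identity $f(\ell_t)|X_t|-F(z_t;L_t^0(B))=\int_0^t f(\ell_s)\,dN_s$ is stable under the usual approximations. The set of bounded Borel functions for which it holds is a vector space, contains the continuous functions by the previous step, and is closed under bounded monotone pointwise limits: both the Lebesgue--Stieltjes integral $\int_0^t z_s f(\ell_s)\,d\ell_s$ and the stochastic integral $\int_0^t f(\ell_s)\,dN_s$ pass to the limit, the latter by the dominated convergence theorem for stochastic integrals. The functional monotone class theorem then yields all bounded Borel functions, and a localization in the range of $\ell$ removes the boundedness restriction.

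The main obstacle is not the smooth case, where the computation collapses thanks to $|X_t|\,dC_t=0$, but the rigorous extension to Borel $f$: for such $f$ the process $f(\ell_t)$ need not be a semimartingale, so integration by parts is no longer available and one must establish the limiting identity through approximation. An alternative to the monotone class step would be to invoke the balayage formula directly, using that $f(\ell_t)=f(\ell_{\gamma_t})$ is frozen along the excursions of $B$ (because $\ell$ is constant on each interval $]\gamma_t,t]$), exactly as in the proof of Proposition \ref{pm}.
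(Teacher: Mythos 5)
Your proof is correct, and it reaches the conclusion by a more elementary, self-contained route than the paper. Both arguments rest on the same two ingredients: the ``freezing'' identity $f(L^{0}_{t}(B))|X_{t}|=\int_{0}^{t}{f(L^{0}_{s}(B))\,d|X_{s}|}$ and a decomposition of $|X|$ whose finite-variation part is $\int_{0}^{\cdot}{z_{s}\,dL^{0}_{s}(B)}$. The paper obtains the freezing identity in one stroke from the balayage formula (the process $f(L^{0}_{s}(B))$ is predictable and constant on the excursion intervals of $|X|$, since the zeros of $B$ are contained in those of $X$, so Borel $f$ is handled immediately), and it substitutes the explicit decomposition of Corollary \ref{c6}, so the martingale part appears as a concrete stochastic integral against $B$. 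You instead take as input only the abstract statement of Proposition \ref{pm} --- which is indeed all that is needed --- and manufacture the freezing identity by hand: integration by parts for $C^{1}$ functions, where the term $\int_{0}^{t}{|X_{s}|\,df(L^{0}_{s}(B))}$ vanishes because $df(L^{0}_{s}(B))$ is absolutely continuous with respect to $dL^{0}_{s}(B)$, hence carried by $\{B=0\}\subseteq\{X=0\}$, followed by a monotone class argument with stochastic dominated convergence and a localization in the range of the local time to reach locally bounded Borel $f$. What the paper's route buys is brevity, at the price of invoking the balayage machinery; what yours buys is independence from that machinery and an explicit explanation of why a Borel $f$, for which $f(L^{0}(B))$ need not be a semimartingale, causes no trouble. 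One small imprecision: your monotone class step says the class ``contains the continuous functions by the previous step,'' but that step only covers $C^{1}$ functions; either take the bounded $C^{1}$ functions as the multiplicative class (they generate the Borel $\sigma$-field, so the functional monotone class theorem applies unchanged) or first pass from $C^{1}$ to continuous by locally uniform approximation. This is cosmetic, not a gap.
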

\begin{proof}
We have thanks to balayage formula,
$$f(L_{s}^{0}(B))|X_{t}|=\int_{0}^{t}{f(L_{s}^{0}(B))d|X_{s}|}.$$
It follows from Corollary \ref{c6} that
$$f(L_{t}^{0}(B))|X_{t}|=\int_{0}^{t}{f(L_{s}^{0}(B))z_{\gamma_{s}}{\rm sgn}(B_{s})e^{W_{s}}\left(1+B_{s}u_{s}1_{]\gamma_{t};t]}(s)\right)dB_{s}}+\int_{0}^{t}{f(L_{s}^{0}(B))z_{s}dL_{s}^{0}(B)}.$$
Then,
$$f(L_{t}^{0}(B))|X_{t}|-F(Z_{t};L_{t}^{0}(B))=\int_{0}^{t}{f(L_{s}^{0}(B))z_{\gamma_{s}}{\rm sgn}(B_{s})e^{W_{s}}\left(1+B_{s}u_{s}1_{]\gamma_{t};t]}(s)\right)dB_{s}}.$$
Consequently, the theorem is proved..
\end{proof}
\begin{rem}\label{r1}
We have proved that for $f$ a locally bounded Borel function, $(f(L_{t}^{0}(B))|X_{t}|; t\geq0)$ is also a process of $(\Sigma_{B}^{0})$ and its finite variation process is $(F(z_{t};L_{t}^{0}(B));t\geq0)$.
\end{rem}
The next corollary gives us conditions under which $\left(f(L_{s}^{0}(B))|X_{t}|-F(z_{t};L_{t}^{0}(B)); t\geq0\right)$ is a true martingale.
\begin{coro}\label{unif}
Let $X$ be a process of class $(\Sigma_{B}^{0})$ such that $|X|$ is of class $(D)$. If $f$ is a Borel bounded function with compact support, then $\left(f(L_{s}^{0}(B))|X_{t}|-F(z_{t};L_{t}^{0}(B)); t\geq0\right)$ is a uniformly integrable martingale.
\end{coro}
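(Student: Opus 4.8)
The plan is to start from Proposition \ref{p7}, which already identifies
$$N_t = f(L_t^0(B))|X_t| - F(z_t; L_t^0(B))$$
as a local martingale, and then to upgrade this local martingale to a uniformly integrable martingale by checking that $N$ is of class $(D)$; the conclusion will then follow from the classical fact that a local martingale of class $(D)$ is a uniformly integrable (true) martingale. Thus the entire task reduces to verifying that each of the two terms defining $N$ is of class $(D)$, and then invoking stability of class $(D)$ under differences.

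For the first term I would use the boundedness of $f$. Writing $|f|\leq C$, one has $|f(L_\tau^0(B))|\,|X_\tau| \leq C\,|X_\tau|$ for every finite stopping time $\tau$. Since $|X|$ is assumed to be of class $(D)$, the family $\{|X_\tau|\}$ is uniformly integrable, and a family dominated by a uniformly integrable one is itself uniformly integrable; hence $\big(f(L_t^0(B))|X_t|;\,t\geq 0\big)$ is of class $(D)$.

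For the second term I would show the stronger statement that $F(z_\cdot; L_\cdot^0(B))$ is uniformly bounded, which makes it trivially of class $(D)$. Using that $z$ is a bounded predictable process (as recorded in the proof of Proposition \ref{pm}), I bound
$$\big|F(z_t; L_t^0(B))\big| \leq \|z\|_\infty \int_0^t \big|f(L_s^0(B))\big|\, dL_s^0(B).$$
Because $s\mapsto L_s^0(B)$ is continuous and non-decreasing, a change of variables in the Stieltjes integral gives
$$\int_0^t \big|f(L_s^0(B))\big|\, dL_s^0(B) = \int_0^{L_t^0(B)} |f(\ell)|\, d\ell \leq \int_0^\infty |f(\ell)|\, d\ell,$$
which is finite precisely because $f$ is bounded with compact support. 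Hence $|F(z_t;L_t^0(B))| \leq \|z\|_\infty\,\|f\|_{L^1}$ uniformly in $t$ and $\omega$, so this term is a bounded process and a fortiori of class $(D)$.

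Combining the two observations, $N$ is the difference of a class $(D)$ process and a bounded process, hence of class $(D)$; together with its being a local martingale, this yields that $N$ is a uniformly integrable martingale. I do not expect a serious obstacle here: the argument is an assembly of standard facts, and the only point that genuinely requires the hypotheses is the control of the finite-variation term $F$. The compact-support assumption on $f$ is exactly what guarantees finiteness of $\|f\|_{L^1}$, and the boundedness of $z$ is what turns the local-time integral into a uniform bound; so the main care should go into justifying the change of variables and recalling the boundedness of $z$, after which the passage from local to uniformly integrable martingale is immediate.
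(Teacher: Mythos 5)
Your proposal is correct and follows essentially the same route as the paper: both bound the finite-variation term $F(z_t;L_t^0(B))$ by $\|z\|_\infty \|f\|_{L^1}\leq \lambda KC$ via the compact support of $f$, dominate the other term by $C|X_t|$ using the class $(D)$ hypothesis, and conclude that the local martingale from Proposition \ref{p7} is of class $(D)$, hence uniformly integrable. Your write-up is in fact slightly more careful than the paper's, since you make explicit the change of variables $\int_0^t |f(L_s^0(B))|\,dL_s^0(B)=\int_0^{L_t^0(B)}|f(\ell)|\,d\ell$ and the final step that a class $(D)$ local martingale is a true uniformly integrable martingale, both of which the paper leaves implicit.
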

\begin{proof}
There exist two constants $C>0$ and $K>0$ such that $\forall x\geq0$, $|f(x)|\leq C$ and $\forall x\geq K$, $f(x)=0$. We have:
$$|F(z_{t};L_{t}^{0}(B))-f(L_{s}^{0}(B))|X_{t}||=\left|\int_{0}^{t}{f(L_{s}^{0}(B))z_{s}dL_{s}^{0}(B)}-f(L_{s}^{0}(B))|X_{t}|\right|$$
We know that $z$ is a non-negative bounded process. Then, there exists a constant $\lambda>0$ such that $\forall t\geq0$, $z_{t}\leq\lambda$. Therefore,
$$|F(z_{t};L_{t}^{0}(B))-f(L_{s}^{0}(B))|X_{t}||\leq \lambda \left|\int_{0}^{t}{f(L_{s}^{0}(B))dL_{s}^{0}(B)}\right|+|f(L_{s}^{0}(B))||X_{t}|.$$
Consequently, we have
$$|F(z_{t};L_{t}^{0}(B))-f(L_{s}^{0}(B))|X_{t}||\leq\lambda KC+C|X_{t}|.$$
We deduce that $\left(f(L_{s}^{0}(B))|X_{t}|-F(z_{t};L_{t}^{0}(B)); t\geq0\right)$ is a local martingale of class $(D)$ since \\$(\lambda KC+C|X_{t}|;t\geq0)$ is of class $(D)$. This achieves the proof.
\end{proof}
\begin{coro}\label{c10}
Let $X$ be a process of the class $(\Sigma_{B}^{0})$ and $f$ be a locally bounded Borel function. 
Then, there exists a process of bounded variation, adapted and continuous $R$ such that the measure $dR_{t}$ is carried by $\{t\geq0;f(L_{t}^{0}(B)X_{t}=0\}$ and the process $$\left(f(L_{s}^{0}(B))X_{t}-\int_{0}^{t}{^{p}(K_{s})f(L_{s}^{0}(B))z_{s}dL_{s}^{0}(B)}-R_{t}; t\geq0\right)$$ is a local martingale.
\end{coro}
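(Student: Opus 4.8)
The plan is to follow the same route as in Corollary \ref{p3} and its specialization Corollary \ref{c8}, but to carry the extra factor $f(L^0_t(B))$ through the progressive balayage formula. By Proposition \ref{pabs} and Corollary \ref{cmart}, the process $M=Z^{1/2}X$ is a continuous local martingale with $|X|=|M|$, and by Corollary \ref{mart} one has $X_t=K_{g_t}|M_t|$ for all $t\ge0$, where $g_t=\sup\{s\le t: X_s=0\}$. Since $|M|=|X|$, the zero set of $|M|$ is exactly $\{X=0\}$ and its associated last-exit process is $g$.

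First I would record the observation that lets $f(L^0(B))$ be frozen at $g_t$. Because $X\in(\Sigma_B^0)$, the zero set of $B$ is contained in that of $X$, so on each excursion interval $]g_t,t]$ of $X$ the Brownian motion $B$ does not vanish and $L^0(B)$ is constant there. Hence $L^0_{g_s}(B)=L^0_s(B)$ for every $s$, and setting $H_s=f(L^0_s(B))K_s$ we get $H_{g_s}=f(L^0_s(B))K_{g_s}$. Applying the balayage formula in the progressive case to the continuous semimartingale $|M|$ with the process $H$ yields
$$f(L^0_t(B))X_t=H_{g_t}|M_t|=\int_0^t {}^p(H_{g_s})\,d|M_s|+R_t,$$
where $R$ is continuous, adapted, of bounded variation, and $dR$ is carried by $\{t\ge0: X_t=0\}$. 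Since $s\mapsto f(L^0_s(B))$ is predictable ($L^0(B)$ being continuous adapted and $f$ Borel), it pulls out of the predictable projection, so that ${}^p(H_{g_s})=f(L^0_s(B))\,{}^p(K_{g_s})$.

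It then remains to isolate the martingale part. Applying Tanaka's formula $d|M_s|=\mathrm{sgn}(M_s)\,dM_s+dL^0_s(M)$ and substituting the local-time identity $dL^0_s(M)=z_s\,dL^0_s(B)$ from Corollary \ref{c6}, I obtain
$$f(L^0_t(B))X_t=\int_0^t f(L^0_s(B))\,{}^p(K_{g_s})\,\mathrm{sgn}(M_s)\,dM_s+\int_0^t {}^p(K_{g_s})\,f(L^0_s(B))\,z_s\,dL^0_s(B)+R_t.$$
On the support of $dL^0(B)$ one has $B_s=0$, hence $X_s=0$ and $g_s=s$, so ${}^p(K_{g_s})$ may be replaced by ${}^p(K_s)$ in the $dL^0(B)$-integral, exactly as in Corollary \ref{p3}. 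Since $\{X=0\}\subseteq\{f(L^0(B))X=0\}$, the measure $dR$ is a fortiori carried by $\{t\ge0: f(L^0_t(B))X_t=0\}$, while the stochastic integral against $dM$ is a local martingale; this gives the claimed decomposition, which contains Corollary \ref{c8} as the case $f\equiv1$.

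I expect the main obstacle to be the justification of the two rewritings that make the argument work: (i) that $f(L^0_t(B))$ is frozen at the last zero $g_t$ of $X$, which is precisely where the hypothesis $X\in(\Sigma_B^0)$ (that is, $\{B=0\}\subseteq\{X=0\}$) is genuinely used and which is what allows the integrand to be written as $H_{g_s}$ so that balayage applies; and (ii) the replacement of ${}^p(K_{g_s})$ by ${}^p(K_s)$ against $dL^0(B)$, which has to be argued at the level of predictable projections rather than pathwise. The continuity of $R$, despite $f$ being merely Borel, is automatic here, since the jumps of $H_{g_t}$ occur only at times when $|M_t|=0$ and are therefore killed.
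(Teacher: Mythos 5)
Your proof is correct, and it reaches the conclusion through a genuinely different decomposition than the paper's. The paper keeps the factor $f(L^{0}(B))$ inside the semimartingale: it applies the progressive balayage formula to $Y_{t}=f(L_{t}^{0}(B))|X_{t}|$ with the bounded progressive process $K$, and then identifies $dY_{s}$ by invoking Remark \ref{r1} (i.e.\ Proposition \ref{p7}, whose proof carries the explicit integrand $z_{\gamma_{s}}{\rm sgn}(B_{s})e^{W_{s}}(1+B_{s}u_{s}1_{]\gamma_{t};t]}(s))dB_{s}$ coming from Theorem \ref{th1}). You instead keep the semimartingale simple, namely $|M|$ with $M=Z^{1/2}X$ the Prokaj-type martingale of Corollaries \ref{cmart} and \ref{mart}, and push $f(L^{0}(B))$ into the progressive process $H=f(L^{0}(B))K$; the differential $d|M_{s}|$ is then identified by Tanaka's formula together with $dL^{0}_{s}(M)=z_{s}dL^{0}_{s}(B)$ from Corollary \ref{c6}, exactly as in the proof of Corollary \ref{c8}. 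To make this placement legitimate you need the two observations you isolate: the freezing identity $L^{0}_{g_{s}}(B)=L^{0}_{s}(B)$ (which uses $\{B=0\}\subseteq\{X=0\}$, i.e.\ precisely the hypothesis $X\in(\Sigma_{B}^{0})$), and the pull-out of the predictable factor $f(L^{0}(B))$ from the predictable projection; both are sound, and neither is needed in the paper's route because there $f$ never enters a projection. What each approach buys: the paper's proof is shorter given its prior results, since Remark \ref{r1} is used as a black box; yours bypasses the exponential representation entirely, expresses the local-martingale part as a clean $dM$-integral, yields the sharper conclusion that $dR_{t}$ is carried by $\{X_{t}=0\}$ (a subset of the set $\{f(L_{t}^{0}(B))X_{t}=0\}$ demanded by the statement), and treats the replacement of ${}^{p}(K_{g_{s}})$ by ${}^{p}(K_{s})$ under $dL^{0}(B)$ at the level of projections, which is more careful than the paper's one-line assertion. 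Your route also quietly avoids a looseness in the paper's proof, where balayage is applied with the last-zero process of $X$ although the relevant semimartingale is $Y$, whose zero set may be strictly larger; your $|M|$ has zero set exactly $\{X=0\}$.

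One caveat: the balayage formula of \cite{siam} recalled in the appendix is stated for \emph{bounded} progressive processes, and your $H=f(L^{0}(B))K$ is only locally bounded when $f$ is merely locally bounded. This is repaired by stopping at $T_{n}=\inf\{t\geq0:L_{t}^{0}(B)\geq n\}$, on which $f(L^{0}(B))$ is bounded, and letting $n\to\infty$; the same localization is tacitly needed in the paper's own use of balayage in Proposition \ref{p7}, so this is a shared minor gap rather than a flaw specific to your argument.
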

\begin{proof}
We know that for every $t\geq0$, $X_{t}=K_{g_{t}}|X_{t}|$. If we note $Y_{t}=f(L_{t}^{0}(B))|X_{t}|$, it follows from balayage formula that
$$f(L_{t}^{0}(B))X_{t}=K_{g_{t}}Y_{t}=\int_{0}^{t}{^{p}(K_{g_{s}})dY_{s}} +R_{t},$$
where $R$ is a process of bounded variation, adapted and continuous such that the measure $dR_{t}$ is carried by $\{t\geq0;f(L_{t}^{0}(B)X_{t}=0\}$.
An application of Remark \ref{r1} implies that
$$f(L_{t}^{0}(B))X_{t}=\int_{0}^{t}{^{p}(K_{g_{s}})f(L_{s}^{0}(B))z_{\gamma_{s}}{\rm sgn}(B_{s})e^{W_{s}}\left(1+B_{s}u_{s}1_{]\gamma_{t};t]}(s)\right)dB_{s}}+\int_{0}^{t}{^{p}(K_{g_{s}})f(L_{s}^{0}(B))z_{s}dL_{s}^{0}(B)}+R_{t}.$$
Since we have ($z_{s}dL_{s}^{0}(B))$- almost surely, $g_{s}=s$.  Therefore, it follows that
$$f(L_{t}^{0}(B))X_{t}=\int_{0}^{t}{^{p}(K_{g_{s}})f(L_{s}^{0}(B))z_{\gamma_{s}}{\rm sgn}(B_{s})e^{W_{s}}\left(1+B_{s}u_{s}1_{]\gamma_{t};t]}(s)\right)dB_{s}}+\int_{0}^{t}{^{p}(K_{s})f(L_{s}^{0}(B))z_{s}dL_{s}^{0}(B)}+R_{t}.$$
Consequently, $$\left(f(L_{s}^{0}(B))X_{t}-\int_{0}^{t}{^{p}(K_{s})f(L_{s}^{0}(B))z_{s}dL_{s}^{0}(B)}-R_{t}; t\geq0\right)$$ is a local martingale.
\end{proof}
\begin{rem}
The process $$\left(f(L_{s}^{0}(B))X_{t}-\int_{0}^{t}{^{p}(K_{s})f(L_{s}^{0}(B))z_{s}dL_{s}^{0}(B)}-R_{t}; t\geq0\right)$$ is a uniformly integrable martingale under assumptions of Corollary \ref{unif}.
\end{rem}

Mansury and Yor in \cite{man} have characterized $\mathcal{M}_{0}^{strict}$, the set of Brownian martingales whose zero set coincides exactly with that Brownian motion as follows:
\begin{theorem}\label{tman}
A martingale $(M_{t};t\geq0)$ belongs to $\mathcal{M}_{0}^{strict}$ if, and only if $M$ may be written 
$$z_{\gamma_{t}}B_{t}\exp\left(\int_{\gamma_{t}}^{t}{u_{s}\left(dB_{s}-\frac{ds}{B_{s}}\right)-\frac{1}{2}\int_{\gamma_{t}}^{t}{u_{s}^{2}ds}}\right)$$
with $P(\exists u\geq0, z_{\gamma_{u}}=0; u\neq \gamma_{u})=0$.
\end{theorem}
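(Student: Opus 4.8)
The plan is to treat both implications by working excursion by excursion on the open random set $\{t\geq 0 : B_t\neq 0\}$ and then gluing the resulting local description back together along the zero set of $B$. Throughout, the key structural fact is that the drift correction $-\int u_s\,ds/B_s$ inside the exponential is exactly what is needed to make the product with $B$ driftless.

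For the direct implication, suppose $M\in\mathcal{M}_0^{strict}$. Since $M$ is a Brownian martingale, the martingale representation theorem furnishes a predictable $\sigma$ with $dM_t=\sigma_t\,dB_t$. On any excursion interval $]\gamma,d[$ of $B$ both $B$ and $M$ stay away from $0$ — this is precisely where the hypothesis $\{M=0\}=\{B=0\}$ is used — so $\phi_t:=M_t/B_t$ is a well-defined continuous semimartingale there. Applying It\^o's formula to $M/B$, with $d\langle M,B\rangle_t=\sigma_t\,dt$ and $d\langle B\rangle_t=dt$, and substituting $M_t=\phi_t B_t$, one obtains after simplification
$$d\phi_t=(\sigma_t-\phi_t)\left(\frac{dB_t}{B_t}-\frac{dt}{B_t^{2}}\right).$$
Setting $u_t:=(\sigma_t-\phi_t)/(\phi_t B_t)$ recasts this as the linear equation $d\phi_t=\phi_t u_t\left(dB_t-\frac{dt}{B_t}\right)$, whose solution is the stochastic exponential appearing in the statement. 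Taking $z_{\gamma_t}:=\phi_{\gamma_t^{+}}=\lim_{u\to\gamma_t}M_u/B_u$, which is frozen throughout the excursion, yields the announced formula $M_t=z_{\gamma_t}B_t\exp(\cdots)$. The strictness condition $P(\exists u\geq 0,\ z_{\gamma_u}=0;\ u\neq\gamma_u)=0$ is then forced: were $z_{\gamma_u}$ to vanish at an interior point of an excursion, the formula would make $M$ identically zero on that whole excursion, contradicting $\{M=0\}=\{B=0\}$.

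For the converse, I start from the representation and first verify it defines a local martingale. Writing $E_t$ for the exponential factor, a direct computation gives $dE_t=E_t u_t\left(dB_t-\frac{dt}{B_t}\right)$, and the It\^o product rule then yields
$$d(B_t E_t)=E_t(1+B_t u_t)\,dB_t,$$
so the drift produced by the correction term cancels exactly and $B_tE_t$ is driftless on each excursion. Since $z_{\gamma_t}$ is $\mathcal{F}_{\gamma_t}$-measurable and constant on excursions, $M_t=z_{\gamma_t}B_tE_t$ is a local martingale, and a true martingale under the stated integrability. It remains to identify the zero set: because $E_t>0$ for all $t$, one has $M_t=0$ precisely when $B_t=0$ or $z_{\gamma_t}=0$, and the hypothesis guarantees that $z_{\gamma_t}$ can vanish only when $\gamma_t=t$, that is, only at zeros of $B$. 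Hence $\{M=0\}=\{B=0\}$ and $M\in\mathcal{M}_0^{strict}$.

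The step I expect to be the main obstacle is the gluing: passing from the excursion-wise identities to a single global representation valid for all $t\geq 0$. This requires checking that the processes $z$ and $u$ assembled excursion by excursion are genuinely predictable, that the limit defining $z_{\gamma_t}$ exists and depends measurably on $\gamma_t$, and that no extra mass is created along the topologically intricate, Lebesgue-null zero set of $B$. This last point is best handled through the balayage formula in the progressive case recalled in the appendix, which legitimizes freezing $z$ at $\gamma_t$ under the stochastic integral; the cancellation of the $dt/B_t$ drift together with the identity $\gamma_s=s$, $dL^0_s(B)$-almost surely, are the technical facts that keep the gluing consistent.
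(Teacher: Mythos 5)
First, a point of reference: the paper contains no proof of this statement at all — it is quoted verbatim from Mansuy and Yor \cite{man} (the same source invoked as ``Theorem 3.3 of \cite{man}'' in the proof of Theorem \ref{th1}) — so your proposal can only be measured against that standard argument, whose computational core it does shadow closely. Your It\^o computations are correct: on a fixed excursion interval the identity $d\phi_t=(\sigma_t-\phi_t)\bigl(\frac{dB_t}{B_t}-\frac{dt}{B_t^{2}}\bigr)$ for $\phi=M/B$ is right; the definition $u_t=(\sigma_t-\phi_t)/(\phi_tB_t)$ is legitimate precisely because strictness keeps $\phi\neq0$ inside excursions; the drift cancellation $d(B_tE_t)=E_t(1+B_tu_t)\,dB_t$ is exact; and both little arguments tying strictness to the condition $P(\exists u\geq0,\ z_{\gamma_u}=0;\ u\neq\gamma_u)=0$ are sound. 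Note also that, as the theorem is phrased (``a \emph{martingale} $M$ belongs to $\mathcal{M}_0^{strict}$ if and only if\dots''), the converse only requires identifying the zero set, which you do correctly; the local-martingale verification you attempt there is not actually needed.

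The genuine gap is in the direct implication, and it is not a ``gluing technicality'' to be deferred: it is the mathematical content of the theorem. Anchoring the representation at $\gamma_t$ presupposes (i) that $z_{\gamma_t}=\lim_{u\downarrow\gamma_t}M_u/B_u$ exists — a $0/0$ limit taken at a time which is \emph{not} a stopping time — and (ii) that the integrals $\int_{\gamma_t}^{t}u_s\bigl(dB_s-\frac{ds}{B_s}\bigr)$ and $\int_{\gamma_t}^{t}u_s^{2}ds$ converge at their left endpoint, where your $u_s$ has $B_s\to0$ in its denominator. Nothing in the excursion-wise computation produces either fact; solving the linear SDE from an interior anchor point $t_0$ and letting $t_0\downarrow\gamma_t$ is exactly as hard as proving (i). In \cite{man}, and in Az\'ema--Yor \cite{1} on which it rests, this is where the progressive enlargement of the filtration at $\gamma$ and the quotient theorem (recalled as Theorem \ref{qot} in the appendix of this paper) enter: the quotient $M_{\gamma+t}/B_{\gamma+t}$ is shown to be a uniformly integrable martingale in the enlarged filtration, and (backward) martingale convergence delivers the limit, its measurability in $\gamma_t$, and the convergence of the integrals. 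You cite balayage but never this machinery, and balayage alone cannot supply it. The same objection applies to the unneeded step of your converse: asserting that $z_{\gamma_t}B_tE_t$ is a local martingale ``since $z_{\gamma_t}$ is constant on excursions'' presumes $BE$ is a global semimartingale whose decomposition is the sum of the excursion-wise ones with no singular part charging the zero set — again a statement of the same depth, not a consequence of the local computation.
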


We note that, from Theorem \ref{tman} and Theorem \ref{th1} , a process $X$ belongs to $(\Sigma)$ with $\{t\geq0; X_{t}=0\}=\{t\geq0; B_{t}=0\}$ if, and only if $|X|$ may be written
$$z_{\gamma_{t}}|B_{t}|\exp\left(\int_{\gamma_{t}}^{t}{u_{s}\left(dB_{s}-\frac{ds}{B_{s}}\right)-\frac{1}{2}\int_{\gamma_{t}}^{t}{u_{s}^{2}ds}}\right)$$
with $P(\exists u\geq0, z_{\gamma_{u}}=0; u\neq \gamma_{u})=0$.

In next theorem, we shall characterize processes of the class $(\Sigma)$ whose zero set coincides exactly with that of Brownian motion.
\begin{theorem}
Let $X$ be a process of the class $(\Sigma)$. The following holds.

$\{t\geq0:X_{t}=0\}=\{t\geq0:B_{t}=0\}$ if, and only if, there exists a non-negative and bounded predictable process $z^{'}$ satisfying $P(\exists u\geq0, z^{'}_{\gamma_{u}}=0; u\neq \gamma_{u})=0$ such that $\left(|X_{t}|-\int_{0}^{t}{z^{'}_{s}dL_{s}^{0}(B)};t\geq0\right)$ is a local martingale.
\end{theorem}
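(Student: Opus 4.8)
The plan is to prove the two implications separately, leaning on the representation and balayage results already established in this section. Throughout, recall that $X\in(\Sigma)$ gives $|X|\in(\Sigma)$ by Theorem \ref{abs}, so $|X|$ is a non-negative submartingale and its Doob--Meyer decomposition is unique; this uniqueness is what lets me match finite variation parts.

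For the direct implication, I would first note that $\{t\geq0:X_t=0\}=\{t\geq0:B_t=0\}$ forces $X$ to vanish on the zero set of $B$, so $X$ is an element of $(\Sigma_B^0)$ and Proposition \ref{pm} applies: it produces the non-negative bounded predictable process $z$ attached to $X$ by Theorem \ref{th1}, for which $\left(|X_t|-\int_0^t z_s\,dL_s^0(B);t\geq0\right)$ is a local martingale. It then remains only to check the non-degeneracy of $z$. For this I would invoke the observation recorded after Theorem \ref{tman}: since $X\in(\Sigma)$ and its zero set coincides exactly with that of $B$, the associated $z$ must satisfy $P(\exists u\geq0,z_{\gamma_u}=0;u\neq\gamma_u)=0$. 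Taking $z'=z$ settles this direction.

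For the converse, I would start from Proposition \ref{pabs}, which yields a continuous local martingale $M$ with $|X|=|M|$; by Tanaka's formula the increasing part of the submartingale $|X|=|M|$ is $L^0(M)$. Comparing this with the hypothesis through uniqueness of the Doob--Meyer decomposition identifies the two finite variation parts, giving the measure identity $dL_t^0(M)=z'_t\,dL_t^0(B)$. The aim is then to upgrade this identity, together with the condition $P(\exists u\geq0,z'_{\gamma_u}=0;u\neq\gamma_u)=0$, into the set equality $\{M=0\}=\{B=0\}$; since $\{X=0\}=\{|M|=0\}=\{M=0\}$, this is exactly the desired conclusion. Concretely I would argue that $M\in\mathcal{M}_0^{strict}$: first that $M$ vanishes on $\{B=0\}$ (so Theorem 3.3 of \cite{man} applies and puts $M$ in Mansury--Yor form), and then that the condition on $z'$ prevents $M$ from vanishing inside a $B$-excursion, whence $\{M=0\}=\{B=0\}$ by Theorem \ref{tman}.

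The main obstacle is this converse, and precisely the passage from the measure identity $dL^0(M)=z'\,dL^0(B)$ to the two set inclusions. For $\{B=0\}\subseteq\{M=0\}$ I would use that the zeros of $B$ form a perfect set carrying $dL^0(B)$, so that, $z'$ being positive at excursion endpoints (which are dense in $\{B=0\}$), $L^0(M)=\int z'\,dL^0(B)$ genuinely accumulates at every zero of $B$; as $dL^0(M)$ is carried by $\{M=0\}$, continuity of $M$ then forces $M$ to vanish there. For $\{M=0\}\subseteq\{B=0\}$ the probabilistic hypothesis $P(\exists u\geq0,z'_{\gamma_u}=0;u\neq\gamma_u)=0$ is essential: on an excursion interval of $B$ the measure $dL^0(B)$, hence $dL^0(M)$, is flat, so $M$ keeps a constant sign there, and the non-vanishing of $z'_{\gamma_u}$ rules out a tangential zero strictly inside the excursion. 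Both inclusions are excursion-theoretic in nature and constitute the delicate part of the argument; everything else is bookkeeping with Tanaka's formula and the balayage identities already exploited in Proposition \ref{pm} and its corollaries.
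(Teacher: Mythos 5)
Your forward implication is essentially the paper's own argument: take $z'=z$, get the local martingale property from Proposition \ref{pm}, and read the non-degeneracy of $z$ off the observation following Theorem \ref{tman}; there is nothing to object to there. In the converse, you and the paper both begin by identifying, via uniqueness of the Doob--Meyer decomposition, the increasing part of $|X|$ with $\int_0^{\cdot} z'_s\,dL^0_s(B)$ (you phrase this as $dL^0(M)=z'\,dL^0(B)$ with $|X|=|M|$ from Proposition \ref{pabs}); the divergence is in how that measure identity gets upgraded to the set equality, and this is where your argument has genuine gaps. For the inclusion $\{B=0\}\subseteq\{X=0\}$: non-vanishing of $z'$ at excursion left endpoints is a statement about a countable, $dL^0(B)$-null set, so density of those points does not by itself show that $z'\,dL^0(B)$ charges every neighbourhood of every zero of $B$. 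What rescues this inclusion is the predictability of $z'$ together with the compensation formula of excursion theory: taking $Z_s=1_{\{z'_s=0\}}$ and $F=1\wedge\zeta$ in $E\left[\sum_{g}Z_g F(e_g)\right]=E\left[\int_0^\infty Z_s\,dL^0_s(B)\right]n(F)$ forces $z'>0$ holding $dL^0(B)$-a.e., whence $\mathrm{supp}\left(z'\,dL^0(B)\right)=\{B=0\}\subseteq\{X=0\}$. So this half is true and fixable, but not by the argument you give.

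The second inclusion, $\{X=0\}\subseteq\{B=0\}$, is the real problem. Your claim that flatness of $dL^0(M)$ on a $B$-excursion forces $M$ to keep a constant sign and have no zero there is false for continuous local martingales: such a process can hit $0$ tangentially and freeze, accruing no local time at all. And the condition $P(\exists u\geq0,\ z'_{\gamma_u}=0;\ u\neq\gamma_u)=0$ supplies no mechanism to exclude this; in fact no hypothesis of the statement does. Concretely, let $S=\inf\{t:|B_t|=1\}$, $d=\inf\{t>S:B_t=0\}$, and $T=\inf\{t\geq S: B^2_t=t-S\}$, which lies in $(S,d)$ since $B^2_S-(S-S)=1>0$ while $B^2_d-(d-S)<0$. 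Set $M_t=B_t$ on $[0,S]$, $M_t=\mathrm{sgn}(B_S)\bigl(B^2_{t\wedge T}-(t\wedge T-S)\bigr)$ on $[S,d]$, and $M_t=B_t$ on $[d,\infty)$. This $M$ is a continuous local martingale (a sum of stochastic integrals of predictable indicators against $B_t$ and $B_t^2-t$), it satisfies $L^0(M)=L^0(B)$ identically, so $|M|-\int_0^{\cdot}1\,dL^0_s(B)$ is a local martingale and the hypotheses hold with $z'\equiv1$; yet $M\equiv0$ on $[T,d]$, strictly inside a $B$-excursion, so $\{M=0\}\supsetneq\{B=0\}$. Thus the step you yourself flag as ``the delicate part'' is not merely delicate: it cannot be carried out, because the implication fails as stated. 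You should be aware that the paper's own proof slides over exactly the same point, by asserting that the zero set of a class $(\Sigma)$ process equals the support of its increasing part---which is false for the same reason (any continuous local martingale is in $(\Sigma)$ with null increasing part). So your proposal correctly isolates where the difficulty sits, but neither your excursion argument nor the paper's support assertion closes it; closing it requires strengthening the hypotheses (for instance, assuming the zero set of $X$ coincides with the support of the increasing part of $|X|$).
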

\begin{proof}
$\Rightarrow)$ Let us take $z^{'}=z$. Since $\{t\geq0:X_{t}=0\}=\{t\geq0:B_{t}=0\}$. We have by definition that $$P(\exists u\geq0, z_{\gamma_{u}}=0; u\neq \gamma_{u})=0.$$ Furthermore, we obtain thanks to Proposition \ref{pm} that $$\left(|X_{t}|-\int_{0}^{t}{z_{s}dL_{s}^{0}(B)};t\geq0\right)$$ is a local martingale.\\
$\Leftarrow)$ Now, assume that there exists a non-negative and bounded predictable process $z^{'}$ such that $$P(\exists u\geq0, z^{'}_{\gamma_{u}}=0; u\neq \gamma_{u})=0$$ and $$\left(|X_{t}|-\int_{0}^{t}{z^{'}_{s}dL_{s}^{0}(B)};t\geq0\right)$$ is a local martingale. Then, 
$$A_{t}=\int_{0}^{t}{z^{'}_{s}dL_{s}^{0}(B)}$$
is the non-decreasing process of the sub-martingale $|X|$. We know that $\{t\geq0:X_{t}=0\}$ is the support of $dA_{t}$ since $|X|$ belongs to $(\Sigma)$. Furthermore, $\{t\geq0:B_{t}=0\}$ is the support of $dL_{t}^{0}(B)$. Consequently, 
$$\{t\geq0:X_{t}=0\}=\{t\geq0:B_{t}=0\}$$
since $dA_{t}=z^{'}_{t}dL_{t}^{0}(B)$ and $P(\exists u\geq0, z^{'}_{\gamma_{u}}=0; u\neq \gamma_{u})=0$.
\end{proof}

\begin{prop}\label{p10}
Let $X$ be a process of the class $(\Sigma)$ which vanishes on $\{t\geq0: B_{t}=0\}$. Let $z$ its associated predictable process. If for every $t\geq0$, $z_{\gamma_{t}}=1$. Then, $\{t\geq0: X_{t}=0\}=\{t\geq0: B_{t}=0\}$ and $L_{t}^{0}(B)$ is the increasing part of the sub-martingale $|X|$.
\end{prop}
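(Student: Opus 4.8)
The plan is to read off both conclusions directly from the representation in Theorem~\ref{th1} and from the local-martingale property of Proposition~\ref{pm}, specialized to the hypothesis $z_{\gamma_t}=1$. First I would invoke Theorem~\ref{th1}: since $X$ is of class $(\Sigma)$ and vanishes on $\{t\geq0:B_t=0\}$, its absolute value admits the representation
\begin{equation*}
|X_t|=z_{\gamma_t}|B_t|\exp\left(\int_{\gamma_t}^{t}u_s\left(dB_s-\frac{ds}{B_s}\right)-\frac12\int_{\gamma_t}^{t}u_s^2\,ds\right).
\end{equation*}
Substituting $z_{\gamma_t}=1$ reduces this to $|X_t|=|B_t|\,e^{W_t}$, where the exponential factor $e^{W_t}$ is strictly positive for every $t$. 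Hence $|X_t|=0$ if and only if $|B_t|=0$, which is precisely the set equality $\{t\geq0:X_t=0\}=\{t\geq0:B_t=0\}$. This settles the first assertion.

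For the second assertion I would start from Proposition~\ref{pm}, which yields that
\begin{equation*}
\left(|X_t|-\int_0^t z_s\,dL_s^0(B);\ t\geq0\right)
\end{equation*}
is a local martingale. The decisive simplification is that the compensator collapses to $L^0(B)$ itself: the measure $dL_s^0(B)$ is carried by $\{s:B_s=0\}$, on which $\gamma_s=s$, so $z_s=z_{\gamma_s}=1$ holds $dL_s^0(B)$-almost surely by hypothesis. Consequently $\int_0^t z_s\,dL_s^0(B)=L_t^0(B)$, and $\left(|X_t|-L_t^0(B);\ t\geq0\right)$ is a local martingale. Writing $|X_t|=N_t+L_t^0(B)$ with $N$ this local martingale gives a decomposition of the continuous sub-martingale $|X|$ into a local martingale plus a continuous, adapted, non-decreasing process starting at $0$; by uniqueness of the Doob--Meyer decomposition the increasing part is exactly $L^0(B)$, as claimed.

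The computations here are all routine; the only point requiring care is the identity $z_s=z_{\gamma_s}$, $dL_s^0(B)$-almost surely, i.e. that $\gamma_s=s$ on the support of Brownian local time. This is the same fact repeatedly used earlier in the section (for instance in the proofs of Proposition~\ref{pm} and Corollary~\ref{ccc6}), so I would cite it rather than reprove it. A minor subtlety is that $|X|$ is a priori only a local sub-martingale; to speak of ``the increasing part'' I would observe that the decomposition $|X_t|=N_t+L_t^0(B)$ together with the continuity of all terms already determines the finite-variation part uniquely, so the conclusion holds at the level of the canonical decomposition of the continuous semimartingale $|X|$ irrespective of integrability.
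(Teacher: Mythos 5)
Your proof is correct and follows essentially the same route as the paper's: the zero-set equality is read off from the representation of Theorem \ref{th1} with $z_{\gamma_t}=1$ (the exponential factor being strictly positive), and the identification of the increasing part comes from Proposition \ref{pm} after noting that $z_s\,dL_s^0(B)=z_{\gamma_s}\,dL_s^0(B)=dL_s^0(B)$. In fact you spell out the step $\gamma_s=s$ on the support of $dL_s^0(B)$ and the uniqueness-of-decomposition argument, both of which the paper leaves implicit.
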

\begin{proof}
By applying Corollary \ref{th1}, we obtain that
$$|X_{t}|=|B_{t}|\exp\left(\int_{\gamma_{t}}^{t}{u_{s}\left(dB_{s}-\frac{ds}{B_{s}}\right)}-\frac{1}{2}\int_{\gamma_{t}}^{t}{u_{s}^{2}ds}\right).$$
That implies, $\{t\geq0: X_{t}=0\}=\{t\geq0: B_{t}=0\}$. Furthermore, we have from Proposition \ref{pm} that $(|X_{t}|-L_{t}^{0}(B):t\geq0)$ is a local martingale. This closes proof.
\end{proof}
\begin{coro}\label{c11}
Let $X$ be a process of the class $(\Sigma)$ whose zero set coincides exactly with that of Brownian motion and $z$ its associated bounded predictable process. Then, $(\frac{1}{z_{\gamma_{t}}}|X_{t}|;t\geq0)$ is also a sub-martingale of the class $(\Sigma)$ whose zero set coincides exactly with that of Brownian motion  and its increasing part is $(L_{t}^{0}(B); t\geq0)$.
\end{coro}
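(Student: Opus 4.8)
The plan is to reduce the statement to the normalized process $Y_t:=\frac{1}{z_{\gamma_t}}|X_t|$ and to recognize it as the modulus of a strict Brownian martingale, so that Proposition~\ref{p10} applies essentially verbatim. First I would record the representation of $|X|$. Since $X\in(\Sigma)$ and $\{t\geq0:X_t=0\}=\{t\geq0:B_t=0\}$, the remark following Theorem~\ref{tman} gives
$$|X_t|=z_{\gamma_t}|B_t|\exp\left(\int_{\gamma_t}^t u_s\left(dB_s-\frac{ds}{B_s}\right)-\frac12\int_{\gamma_t}^t u_s^2\,ds\right),$$
together with the strictness condition $P(\exists u\geq0,\,z_{\gamma_u}=0;\,u\neq\gamma_u)=0$. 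This condition is exactly what makes the division by $z_{\gamma_t}$ legitimate: off the zero set of $B$ one has $z_{\gamma_t}\neq0$ almost surely, while on $\{t\geq0:B_t=0\}$ both $|X_t|$ and $|B_t|$ vanish. Thus, writing $W_t=\int_{\gamma_t}^t u_s(dB_s-ds/B_s)-\frac12\int_{\gamma_t}^t u_s^2\,ds$, the quotient
$$Y_t=\frac{1}{z_{\gamma_t}}|X_t|=|B_t|e^{W_t}$$
is a well-defined continuous process vanishing precisely on $\{t\geq0:B_t=0\}$.

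Next I would identify $Y$ as the absolute value of a local martingale. Setting $N_t:=B_t e^{W_t}$, Theorem~\ref{tman} applied with the constant process $z\equiv1$ (which trivially satisfies the strictness condition) shows that $N\in\mathcal{M}_0^{strict}$; in particular $N$ is a continuous local martingale whose zero set coincides with that of $B$. Since the absolute value of a continuous local martingale belongs to $(\Sigma)$, we obtain $Y=|N|\in(\Sigma)$ together with $\{t\geq0:Y_t=0\}=\{t\geq0:N_t=0\}=\{t\geq0:B_t=0\}$. This already delivers two of the three assertions: $Y$ is a sub-martingale of the class $(\Sigma)$, and its zero set coincides exactly with that of Brownian motion.

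It then remains to identify the increasing part of $|Y|=Y$, and here I would invoke Proposition~\ref{p10} applied to $Y$ itself. The process $Y$ is of class $(\Sigma)$ and vanishes on $\{t\geq0:B_t=0\}$, and its associated predictable process $z'$ in the sense of Theorem~\ref{th1} is computed from $z'_{\gamma_t}=\lim_{u\to\gamma_t}\frac{|Y_u|}{|B_u|}=\lim_{u\to\gamma_t}e^{W_u}=e^{W_{\gamma_t}}=1$, the last equality because $B_{\gamma_t}=0$ forces $\gamma_{\gamma_t}=\gamma_t$ and hence the interval defining $W_{\gamma_t}$ degenerates to a point. With $z'_{\gamma_t}\equiv1$, Proposition~\ref{p10} yields at once that $L_t^0(B)$ is the increasing part of the sub-martingale $|Y|=Y$, which is the final claim.

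I expect the only genuinely delicate points to be the two places where the zeros of $B$ enter: one must use the strictness condition $P(\exists u\geq0,\,z_{\gamma_u}=0;\,u\neq\gamma_u)=0$ to divide by $z_{\gamma_t}$ in a well-defined way, and one must use the fact that the exponential functional $W$ restarts from $0$ at every $\gamma_t$ in order to read off $z'_{\gamma_t}=1$. Once these are settled, the result follows directly from the representation theorems and from Proposition~\ref{p10}.
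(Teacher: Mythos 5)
Your proposal is correct and takes essentially the same route as the paper's proof: represent $|X_t|=z_{\gamma_t}|B_t|e^{W_t}$, divide by $z_{\gamma_t}$, observe that the normalized process lies in the class $(\Sigma)$, vanishes on $\{t\geq0:B_t=0\}$ and has associated predictable process $z'\equiv1$, then conclude with Proposition \ref{p10}. The only difference is that you justify the class-$(\Sigma)$ membership of $|B_t|e^{W_t}$ by writing it as $|N_t|$ with $N=Be^{W}$ a local martingale in $\mathcal{M}_{0}^{strict}$ (Theorem \ref{tman} applied with $z\equiv1$), whereas the paper asserts it directly through the ``if'' direction of Theorem \ref{th1}; both steps rest on the same converse direction of a representation theorem, so this is a clarification of the paper's argument rather than a genuinely different one.
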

\begin{proof}
According to Theorem \ref{th1}, we have
$$|X_{t}|=|z_{\gamma_{t}}||B_{t}|\exp\left(\int_{\gamma_{t}}^{t}{u_{s}\left(dB_{s}-\frac{ds}{B_{s}}\right)-\frac{1}{2}\int_{\gamma_{t}}^{t}{u_{s}^{2}ds}}\right).$$
Hence, it follows that
$$\frac{1}{|z_{\gamma_{t}}|}|X_{t}|=|B_{t}|\exp\left(\int_{\gamma_{t}}^{t}{u_{s}\left(dB_{s}-\frac{ds}{B_{s}}\right)-\frac{1}{2}\int_{\gamma_{t}}^{t}{u_{s}^{2}ds}}\right).$$
Therefore, $(\frac{1}{z_{\gamma_{t}}}|X_{t}|;t\geq0)$ is an element of the class $(\Sigma)$ vanishing on $\{t\geq0: B_{t}=0\}$ and its associated predictable process is $z^{'}\equiv1$. Consequently, we can conclude this proof by applying Proposition \ref{p10}.
\end{proof}

We give in what follows, another result which allows to characterize processes of the class $(\Sigma_{B}^{0,strict})$.
\begin{theorem}\label{t11}
Let $X$ be a process of the class $(\Sigma)$. Then, $\mathcal{Z}=\{t\leq1: X_{t}=0\}$ if, and only if, there exists a non-negative and bounded predictable process $z$ satisfying $P(\exists u\geq0, z_{\gamma_{u}}=0; u\neq \gamma_{u})=0$ such that  $$\left(|X_{t}|-z_{\gamma_{t}}|B_{t}|;t\geq0\right)$$ is a local martingale.
\end{theorem}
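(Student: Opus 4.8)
The plan is to prove the two implications separately, reducing each to results already at our disposal; the decisive bridge is the balayage decomposition of the process $z_{\gamma_t}|B_t|$ that was derived in the proof of Corollary \ref{cc6}.

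For the direct implication, I would suppose $\{t\leq1:X_t=0\}=\mathcal{Z}$, so that $X$ vanishes on the zero set $\mathcal{Z}$ of $B$ and in particular $X\in(\Sigma_B^0)$. Then Theorem \ref{th1} furnishes the associated non-negative bounded predictable process $z$, and Corollary \ref{cc6} immediately yields that $\left(|X_t|-z_{\gamma_t}|B_t|;\,t\geq0\right)$ is a local martingale. It remains only to check the strictness condition $P(\exists u\geq0,\ z_{\gamma_u}=0;\ u\neq\gamma_u)=0$. For this I would use the representation $|X_t|=z_{\gamma_t}|B_t|\exp(\cdots)$ of Theorem \ref{th1}: since the exponential factor is strictly positive, $X_t=0$ if and only if $z_{\gamma_t}|B_t|=0$, so the coincidence of the zero sets of $X$ and $B$ forbids $z_{\gamma_u}$ from vanishing at any time $u$ with $B_u\neq0$ (i.e. $u\neq\gamma_u$). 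This is exactly the content of the observation recorded after the Mansury--Yor theorem (Theorem \ref{tman}), and it delivers the required $z$.

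For the converse, I would assume such a $z$ is given and transform the hypothesis into that of the theorem proved just above, which characterizes $\{t\geq0:X_t=0\}=\{t\geq0:B_t=0\}$ through the local-martingale property of $\left(|X_t|-\int_0^t z'_s\,dL_s^0(B)\right)$. To that end I would apply the balayage formula to $z_{\gamma_t}|B_t|$, exactly as in the proof of Corollary \ref{cc6}, obtaining
$$z_{\gamma_t}|B_t|=\int_0^t z_{\gamma_s}\,\mathrm{sgn}(B_s)\,dB_s+\int_0^t z_s\,dL_s^0(B),$$
where the replacement of $z_{\gamma_s}$ by $z_s$ under the last integral uses that $\gamma_s=s$, $dL_s^0(B)$-almost surely. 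Since $z$ is bounded, the stochastic integral $\int_0^t z_{\gamma_s}\,\mathrm{sgn}(B_s)\,dB_s$ is a local martingale; subtracting it from the given local martingale $\left(|X_t|-z_{\gamma_t}|B_t|\right)$ then shows that $\left(|X_t|-\int_0^t z_s\,dL_s^0(B);\,t\geq0\right)$ is a local martingale as well. Invoking the preceding theorem with $z'=z$ — whose remaining hypotheses (non-negativity, boundedness, predictability, and the strictness condition) are part of our assumption — gives $\{t\geq0:X_t=0\}=\{t\geq0:B_t=0\}$, and restricting to $[0,1]$ yields $\{t\leq1:X_t=0\}=\mathcal{Z}$.

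The step I expect to demand the most care is not the local-martingale bookkeeping but the two-way passage through the strictness condition $P(\exists u\geq0,\ z_{\gamma_u}=0;\ u\neq\gamma_u)=0$: one must argue, via the representation of Theorem \ref{th1} and the Mansury--Yor characterization (Theorem \ref{tman}), that this condition is genuinely equivalent to the coincidence of the two zero sets once $X$ is known to vanish on the zeros of $B$. The balayage identity for $z_{\gamma_t}|B_t|$, together with the almost-sure identity $\gamma_s=s$ on the support of $dL^0(B)$, is the technical ingredient that makes the reduction to the already-proved theorem exact.
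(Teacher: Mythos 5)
Your proposal is correct and takes essentially the same route as the paper: your forward direction is Corollary \ref{cc6} (which the paper simply re-derives inline from Corollary \ref{c6} and the balayage formula), together with an explicit check of the strictness condition that the paper leaves implicit, and your converse uses the very same balayage identity for $z_{\gamma_t}|B_t|$ --- the only organizational difference being that you conclude by invoking the converse of the preceding characterization theorem, whereas the paper repeats that theorem's Doob--Meyer decomposition and support argument inline. Mathematically the two arguments coincide.
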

\begin{proof}
$\Rightarrow)$ Assume that $\mathcal{Z}=\{t\leq1: X_{t}=0\}$. We have from Corollary \ref{c6} that
$$|X_{t}|=\int_{0}^{t}{z_{\gamma_{s}}{\rm sgn}(B_{s})e^{W_{s}}\left(1+B_{s}u_{s}1_{]\gamma_{t};t]}(s)\right)dB_{s}}+\int_{0}^{t}{z_{s}dL_{s}^{0}(B)},$$
with $$W_{t}=\int_{\gamma_{t}}^{t}{u_{s}\left(dB_{s}-\frac{ds}{B_{s}}\right)-\frac{1}{2}\int_{\gamma_{t}}^{t}{u_{s}^{2}ds}}.$$
Furthermore, we obtain from an application of balayage formula that
$$z_{\gamma_{t}}|B_{t}|=\int_{0}^{t}{z_{\gamma_{s}}{\rm sgn}(B_{s})dB_{s}}+\int_{0}^{t}{z_{s}dL_{s}^{0}(B)}.$$
Hence, it follows that
$$|X_{t}|-z_{\gamma_{t}}|B_{t}|=\int_{0}^{t}{z_{\gamma_{s}}{\rm sgn}(B_{s})e^{W_{s}}\left(1+B_{s}u_{s}1_{]\gamma_{t};t]}(s)\right)dB_{s}}+\int_{0}^{t}{z_{\gamma_{s}}{\rm sgn}(B_{s})dB_{s}}.$$
Therefore, $$\left(|X_{t}|-z_{\gamma_{t}}|B_{t}|;t\geq0\right)$$ is a local martingale.\\
$\Leftarrow)$ Now, assume that there exists a non-negative and bounded predictable process $z$ satisfying $P(\exists u\geq0, z_{\gamma_{u}}=0; u\neq \gamma_{u})=0$ such that  $$\left(|X_{t}|-z_{\gamma_{t}}|B_{t}|;t\geq0\right)$$ is a local martingale. We know that $X$ is a sub-martingale of the class $(\Sigma)$. Thus, $|X|$ can be written as $|X|=M+A$, where $M$ is a local martingale and $A$ is a non-decreasing, positive and continuous process such that $dA_{t}$ is carried by $\{t\geq0: X_{t}=0\}$. Therefore, we obtain that
$$|X_{t}|-z_{\gamma_{t}}|B_{t}|=\left(M_{t}-\int_{0}^{t}{z_{\gamma_{s}}{\rm sgn}(B_{s})dB_{s}}\right)+\left(A_{t}-\int_{0}^{t}{z_{s}dL_{s}^{0}(B)}\right).$$
That implies that $\left(A_{t}-\int_{0}^{t}{z_{s}dL_{s}^{0}(B)};t\geq0\right)$ is a continuous martingale. Thus, we obtain:
$$A_{t}=\int_{0}^{t}{z_{s}dL_{s}^{0}(B)}.$$
And, $dA_{t}=z_{t}dL_{t}^{0}(B)$. Consequently, $dA_{t}$ and $dL_{t}^{0}(B)$ have the same support since $P(\exists u\geq0, z_{\gamma_{u}}=0; u\neq \gamma_{u})=0$. This completes the proof.
\end{proof}
\begin{coro}
Let $X$ be a process of the class $(\Sigma)$ such that $$\left(|X_{t}|-|B_{t}|;t\geq0\right)$$ is a local martingale. Then, $\mathcal{Z}=\{t\leq1: X_{t}=0\}$.
\end{coro}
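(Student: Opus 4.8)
The statement to prove is the final corollary: if $X$ belongs to the class $(\Sigma)$ and $\left(|X_t|-|B_t|;\,t\geq0\right)$ is a local martingale, then $\mathcal{Z}=\{t\leq1:X_t=0\}$. The natural strategy is to recognize this as the special case of Theorem \ref{t11} obtained by taking the predictable process $z$ to be identically equal to $1$. My first step is therefore to verify that $z\equiv1$ satisfies the structural hypotheses imposed in Theorem \ref{t11}: namely that it is non-negative, bounded, and predictable (all immediate for a constant), and that it meets the degeneracy condition $P(\exists u\geq0,\ z_{\gamma_u}=0;\ u\neq\gamma_u)=0$. Since $z_{\gamma_u}=1\neq0$ for every $u$, this probability is trivially zero, so the condition holds vacuously.

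The second step is to match the hypothesis of the corollary to the hypothesis of Theorem \ref{t11}. With $z\equiv1$ we have $z_{\gamma_t}=1$, so the process $\left(|X_t|-z_{\gamma_t}|B_t|;\,t\geq0\right)$ appearing in Theorem \ref{t11} is exactly $\left(|X_t|-|B_t|;\,t\geq0\right)$, which is assumed to be a local martingale. Thus all the hypotheses of the sufficiency direction ($\Leftarrow$) of Theorem \ref{t11} are in place. I would then simply invoke that direction to conclude that $\mathcal{Z}=\{t\leq1:X_t=0\}$, which is precisely the desired conclusion.

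I do not anticipate a genuine obstacle here, since the result is a direct instantiation of the preceding theorem; the entire content lies in checking that the constant choice $z\equiv1$ is admissible. The one point requiring a moment's care is the implicit identification of the associated predictable process: Theorem \ref{th1} guarantees that any $X\in(\Sigma)$ vanishing on the zeros of $B$ has a representation involving some predictable process $z$, and one must ensure that the $z$ appearing in the local-martingale hypothesis of the corollary is being used consistently with the representation. But because we are applying the sufficiency direction of Theorem \ref{t11} — which takes the existence of \emph{some} admissible $z$ with the stated martingale property as its hypothesis rather than deriving $z$ from $X$ — this consistency issue does not arise; we are free to supply $z\equiv1$ as the witnessing process. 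The proof is then complete in a single line.
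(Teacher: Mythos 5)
Your proof is correct and matches the paper's intent exactly: the paper states this corollary without proof, as an immediate instantiation of the sufficiency direction of Theorem \ref{t11} with the constant process $z\equiv1$, which is precisely what you carry out. Your verification that $z\equiv1$ is admissible (non-negative, bounded, predictable, and trivially satisfying $P(\exists u\geq0,\ z_{\gamma_u}=0;\ u\neq\gamma_u)=0$) and your remark that the sufficiency direction only requires \emph{some} witnessing process are exactly the right points to check.
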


\begin{coro}
Let $X$ be a process of the class $(\Sigma)$ vanishing on $\mathcal{Z}_{1}$. If $z$ is its associated predictable process  such that $\forall t\geq0$, $z_{\gamma_{t}}=1$. Then,  $$\left(|X_{t}|-|B_{t}|;t\geq0\right)$$ is a local martingale.
\end{coro}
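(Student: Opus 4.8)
The plan is to recognize this statement as an immediate specialization of Corollary \ref{cc6}. First I would check that the hypotheses put $X$ squarely in the framework already developed: $X$ is of class $(\Sigma)$ and vanishes on $\mathcal{Z}_{1}=\{t\leq1:B_{t}=0\}$, so $X\in(\Sigma_{B}^{0})$ and the associated predictable processes $z$ and $u$ furnished by Theorem \ref{th1} are at our disposal, with $z$ moreover satisfying $z_{\gamma_{t}}=1$ for every $t\geq0$.

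The decisive step is to invoke Corollary \ref{cc6}, which asserts precisely that for any $X$ of class $(\Sigma)$ vanishing on $\mathcal{Z}_{1}$ the process $\left(|X_{t}|-z_{\gamma_{t}}|B_{t}|;t\geq0\right)$ is a local martingale. Substituting the hypothesis $z_{\gamma_{t}}=1$ then collapses $z_{\gamma_{t}}|B_{t}|$ to $|B_{t}|$, and the conclusion that $\left(|X_{t}|-|B_{t}|;t\geq0\right)$ is a local martingale follows at once.

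For robustness I would also keep in reserve an independent route through Proposition \ref{pm}, which yields that $\left(|X_{t}|-\int_{0}^{t}{z_{s}\,dL_{s}^{0}(B)};t\geq0\right)$ is a local martingale. The single point deserving care is the reduction $\int_{0}^{t}{z_{s}\,dL_{s}^{0}(B)}=L_{t}^{0}(B)$: since $dL_{s}^{0}(B)$ is carried by $\{B_{s}=0\}$, one has $\gamma_{s}=s$ almost everywhere for $dL_{s}^{0}(B)$, so $z_{s}=z_{\gamma_{s}}=1$ on that support. Combining the resulting fact that $\left(|X_{t}|-L_{t}^{0}(B)\right)$ is a local martingale with Tanaka's formula $|B_{t}|=\int_{0}^{t}{{\rm sgn}(B_{s})\,dB_{s}}+L_{t}^{0}(B)$ gives the claim by subtraction. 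In either case no genuine obstacle arises; the entire content is the substitution $z_{\gamma_{t}}=1$ into an already-established local-martingale identity.
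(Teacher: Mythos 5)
Your proposal is correct and matches the derivation the paper intends: this corollary is stated without an explicit proof precisely because it is the immediate specialization of Corollary \ref{cc6} (equivalently, of the forward direction of Theorem \ref{t11}) obtained by substituting the hypothesis $z_{\gamma_{t}}=1$ into the local-martingale identity for $\left(|X_{t}|-z_{\gamma_{t}}|B_{t}|;\,t\geq0\right)$. Your backup route via Proposition \ref{pm} and Tanaka's formula is also sound, including the key reduction $\int_{0}^{t}z_{s}\,dL_{s}^{0}(B)=L_{t}^{0}(B)$ justified by $\gamma_{s}=s$ on the support of $dL_{s}^{0}(B)$.
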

\begin{theorem}\label{estim}
Let $X$ be a process of the class $(\Sigma)$ whose zero set coincides exactly with that of Brownian motion. Define $(\tau_{u})$ the right continuous inverse of $(L_{t}^{0}(B);t\geq0)$: 
$$ \tau_{u}=\inf{\{t\geq0:L_{t}^{0}(B)>u\}}.$$
Let $\varphi:\R_{+}\rightarrow\R_{+}$ be a Borel function. Then, we have the following estimates:
\begin{equation}
	P\left(\exists t\geq0, |X_{t}|>z_{\gamma_{t}}\varphi(L_{t}^{0}(B))\right)=1-\exp\left(-\int_{0}^{+\infty}{\frac{dx}{\varphi(x)}}\right)
\end{equation}
and
\begin{equation}
	P\left(\exists t\leq\tau, |X_{t}|>z_{\gamma_{t}}\varphi(L_{t}^{0}(B))\right)=1-\exp\left(-\int_{0}^{u}{\frac{dx}{\varphi(x)}}\right)
\end{equation}
\end{theorem}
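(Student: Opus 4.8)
The plan is to normalise $X$ to a reflected local martingale, to time--change it into a standard Brownian motion, and then to read off both identities from It\^o's excursion theory.

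First I would reduce to the canonical case. Set $N_t=\frac{1}{z_{\gamma_t}}|X_t|$. By Corollary \ref{c11}, $N$ is a non-negative submartingale of the class $(\Sigma)$ whose zero set is that of $B$ and whose increasing part is $L^0(B)$; moreover, since $z_{\gamma_t}>0$ off the zero set (and both sides vanish on it), the event under study is exactly $\{N_t>\varphi(L^0_t(B))\}$. Because $N$ is a non-negative continuous submartingale of the class $(\Sigma)$, Corollary \ref{sia} furnishes a continuous local martingale $\mathcal N$ with $N=|\mathcal N|$. Comparing the Tanaka decomposition $|\mathcal N_t|=\int_0^t\mathrm{sgn}(\mathcal N_s)d\mathcal N_s+L^0_t(\mathcal N)$ (the same reasoning as in Proposition \ref{p4}) with the fact that $N_t-L^0_t(B)$ is a local martingale shows that $L^0(\mathcal N)-L^0(B)$ is a continuous finite-variation local martingale, hence vanishes; thus $L^0(\mathcal N)=L^0(B)=:L$. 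It therefore suffices to compute $P(\exists t\geq0:\ |\mathcal N_t|>\varphi(L^0_t(\mathcal N)))$ and its analogue up to $\tau_u$.

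Next I would time--change. By Dambis--Dubins--Schwarz write $\mathcal N_t=\beta_{\langle\mathcal N\rangle_t}$ for a standard Brownian motion $\beta$, so that by the occupation--times formula $L^0_t(\mathcal N)=L^0_{\langle\mathcal N\rangle_t}(\beta)$. Since $B$ is recurrent, $L_\infty=L^0_\infty(B)=+\infty$, whence $L^0_\infty(\mathcal N)=+\infty$ forces $\langle\mathcal N\rangle_\infty=+\infty$. Under the substitution $s=\langle\mathcal N\rangle_t$, and using that $t\mapsto\langle\mathcal N\rangle_t$ is onto $[0,+\infty)$, the first event becomes $\{\exists s\geq0:\ |\beta_s|>\varphi(L^0_s(\beta))\}$, while the constraint $t\leq\tau_u$ becomes $s\leq\sigma_u$ with $\sigma_u=\inf\{s\geq0: L^0_s(\beta)>u\}$, because $L^0(\mathcal N)=L$.

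Finally I would invoke It\^o's excursion theory for the reflected Brownian motion $|\beta|$ indexed by its local time $L^0(\beta)$. Along an excursion straddling local--time level $\ell$ the clock $L^0(\beta)$ is frozen at $\ell$, so the moving barrier is crossed on that excursion if and only if its maximum exceeds $\varphi(\ell)$; hence the pairs (local--time level, excursion maximum) form a Poisson point process whose intensity on $\{(\ell,h):h>\varphi(\ell)\}$ is $n(\max>\varphi(\ell))\,d\ell$, where $n$ is the It\^o measure. The void probability of this process over $\{\ell\leq u\}$ gives $P(\exists s\leq\sigma_u:\ |\beta_s|>\varphi(L^0_s(\beta)))=1-\exp(-\int_0^u n(\max>\varphi(\ell))\,d\ell)$, and letting $u\to+\infty$ yields the first identity. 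The main obstacle is the exact normalisation: one must verify that, for the semimartingale local time $L$ of Tanaka's formula, $n(\max>a)=1/a$, equivalently $E[L^0_{T_a}]=a$ for $T_a=\inf\{t:|\beta_t|=a\}$, obtained by optional stopping on the martingale $|\beta|-L^0(\beta)$; this is precisely what removes any spurious factor and gives $\int_0^u n(\max>\varphi(\ell))\,d\ell=\int_0^u \frac{dx}{\varphi(x)}$. The only remaining care is the measurability of the uncountable supremum over $t$, which is handled by the countable excursion decomposition $\R_+\setminus\mathcal W=\cup_n J_n$ recalled in Section 1.
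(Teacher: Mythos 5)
Your proof is correct, and it takes a genuinely different route from the paper's in its main step. The paper's own proof consists of exactly your normalization plus a citation: it invokes Corollary \ref{c11} to see that $N_t=\frac{1}{z_{\gamma_t}}|X_t|$ is a sub-martingale of the class $(\Sigma)$ whose zero set is that of $B$ and whose increasing part is $(L_t^{0}(B))$, and then simply quotes Theorem 3.2 of \cite{nik}, which yields both estimates for any such sub-martingale; multiplying back by $z_{\gamma_t}$ finishes the argument. You perform the same reduction but then re-derive the quoted estimate from scratch: Corollary \ref{sia} gives $N=|\mathcal N|$ for a continuous local martingale $\mathcal N$; the uniqueness argument (a continuous finite-variation local martingale vanishes) identifies $L^{0}(\mathcal N)=L^{0}(B)$; the Dambis--Dubins--Schwarz time change, legitimate because recurrence of $B$ forces $L^{0}_{\infty}(B)=+\infty$ and hence $\langle\mathcal N\rangle_{\infty}=+\infty$, reduces everything to reflected Brownian motion; and the probabilities are then read off as void probabilities of the Poisson point process of excursions, with the normalization $n(\max>a)=1/a$ verified by optional stopping on the martingale $|\beta|-L^{0}(\beta)$. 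The paper's route buys brevity, at the cost of resting entirely on an external black box; your route buys self-containedness, since it uses only results established in the paper (Corollaries \ref{sia} and \ref{c11}) together with classical excursion theory, and it makes transparent why the answer has exponential form: it is literally $1-\exp\left(-\int_{0}^{u}n(\max>\varphi(\ell))\,d\ell\right)$. Your side remarks are also the right ones: the strictness hypothesis $P(\exists u\geq0,\ z_{\gamma_u}=0;\ u\neq\gamma_u)=0$ is what makes the division by $z_{\gamma_t}$ harmless off the zero set, the recurrence of $B$ supplies the condition $A_{\infty}=+\infty$ that is equally needed (though left implicit) when citing \cite{nik}, and the measurability of the uncountable supremum is settled by the countable excursion decomposition.
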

\begin{proof}
According to Corollary \ref{c11}, $\left(\frac{|X_{t}|}{z_{\gamma_{t}}};t\geq0\right)$ is a sub-martingale of the class $(\Sigma)$ whose zero set coincides exactly with $\mathcal{Z}=\{t\leq1:B_{t}=0\}$ and its increasing process is $(L_{t}^{0}(B);t\geq0)$. We obtain from Theorem 3.2 of \cite{nik} that:
 \begin{equation}
	P\left(\exists t\geq0, \frac{|X_{t}|}{z_{\gamma_{t}}}>\varphi(L_{t}^{0}(B))\right)=1-\exp\left(-\int_{0}^{+\infty}{\frac{dx}{\varphi(x)}}\right)
\end{equation}
and
\begin{equation}
	P\left(\exists t\leq\tau, \frac{|X_{t}|}{z_{\gamma_{t}}}>\varphi(L_{t}^{0}(B))\right)=1-\exp\left(-\int_{0}^{u}{\frac{dx}{\varphi(x)}}\right)
\end{equation}
That implies the following
$$P\left(\exists t\geq0, |X_{t}|>z_{\gamma_{t}}\varphi(L_{t}^{0}(B))\right)=1-\exp\left(-\int_{0}^{+\infty}{\frac{dx}{\varphi(x)}}\right)$$
and
$$P\left(\exists t\leq\tau, |X_{t}|>z_{\gamma_{t}}\varphi(L_{t}^{0}(B))\right)=1-\exp\left(-\int_{0}^{u}{\frac{dx}{\varphi(x)}}\right).$$
Consequently, the theorem is proved..
\end{proof}

\begin{coro}\label{c12}
Let $X$ be a process of the class $(\Sigma)$ which vanishes on $\{0\leq t\leq1: B_{t}=0\}$. Let $z$ be its associated predictable process. If for every $t\geq0$, $z_{\gamma_{t}}=1$, define $(\tau_{u})$ the right continuous inverse of $(L_{t}^{0}(B);t\geq0)$: 
$$ \tau_{u}=\inf{\{t\geq0:L_{t}^{0}(B)>u\}}.$$
Let $\varphi:\R_{+}\rightarrow\R_{+}$ be a Borel function. Then, we have the following estimates:
\begin{equation}
	P\left(\exists t\geq0, |X_{t}|>\varphi(L_{t}^{0}(B))\right)=1-\exp\left(-\int_{0}^{+\infty}{\frac{dx}{\varphi(x)}}\right)
\end{equation}
and
\begin{equation}
	P\left(\exists t\leq\tau_{u}, |X_{t}|>\varphi(L_{t}^{0}(B))\right)=1-\exp\left(-\int_{0}^{u}{\frac{dx}{\varphi(x)}}\right)
\end{equation}
\end{coro}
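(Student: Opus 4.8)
The plan is to recognize this statement as nothing more than the specialization of Theorem \ref{estim} to the case $z_{\gamma_t}\equiv1$, so the only real work is to check that $X$ satisfies the hypothesis of that theorem, namely that its zero set coincides exactly with that of $B$.

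First I would invoke Proposition \ref{p10}. By assumption $X$ is of the class $(\Sigma)$, it vanishes on $\{0\leq t\leq1: B_t=0\}$, and its associated predictable process satisfies $z_{\gamma_t}=1$ for every $t\geq0$. These are precisely the hypotheses of Proposition \ref{p10}, which therefore yields at once
$$\{t\geq0: X_t=0\}=\{t\geq0: B_t=0\},$$
so that the zero set of $X$ coincides exactly with that of Brownian motion. This is the only genuine verification the proof requires.

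With this established, $X$ meets every hypothesis of Theorem \ref{estim}. Applying that theorem to $X$ gives
$$P\left(\exists t\geq0, |X_t|>z_{\gamma_t}\varphi(L_t^0(B))\right)=1-\exp\left(-\int_0^{+\infty}\frac{dx}{\varphi(x)}\right)$$
and the companion identity over the horizon $[0,\tau_u]$,
$$P\left(\exists t\leq\tau_u, |X_t|>z_{\gamma_t}\varphi(L_t^0(B))\right)=1-\exp\left(-\int_0^{u}\frac{dx}{\varphi(x)}\right).$$
Substituting the hypothesis $z_{\gamma_t}=1$ into the events inside these two probabilities removes the factor $z_{\gamma_t}$, and the two displayed estimates of the corollary follow immediately.

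I do not anticipate any real obstacle, since the result is a direct specialization. The single point meriting care is that Theorem \ref{estim} presupposes the exact coincidence of zero sets, a condition stated verbatim in the theorem but only implicit in the corollary; it is supplied here by Proposition \ref{p10} through the assumption $z_{\gamma_t}\equiv1$, and without that intermediate step the application of Theorem \ref{estim} would not be licensed.
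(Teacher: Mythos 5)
Your proof is correct and takes essentially the approach the paper intends: Corollary \ref{c12} is stated there without proof as an immediate specialization of Theorem \ref{estim}, and your invocation of Proposition \ref{p10} to secure the zero-set coincidence $\{t\geq0: X_{t}=0\}=\{t\geq0: B_{t}=0\}$ before substituting $z_{\gamma_{t}}\equiv 1$ is exactly the verification that application requires.
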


\begin{coro}\label{c13}
Let $X$ be a process of the class $(\Sigma)$ whose zero set coincides exactly with that of Brownian motion. If $\int_{0}^{+\infty}{\frac{dx}{\varphi}}=+\infty$, then the stopping time $T_{\varphi}=\inf\{t\geq0:\varphi(L_{t}^{0}(B))|X_{t}|>z_{\gamma_{t}}\}$ is finite almost surely. Furthermore, if $T_{\varphi}<\infty$ and $\varphi$ is locally bounded, then
$$|X_{T_{\varphi}}|=\frac{z_{\gamma_{T_{\varphi}}}}{\varphi(L_{T_{\varphi}}^{0}(B))}.$$
\end{coro}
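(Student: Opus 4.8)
The plan is to read the statement as two separate claims---almost-sure finiteness of $T_{\varphi}$, and the value of $|X|$ at $T_{\varphi}$---and to obtain the first from Theorem~\ref{estim} and the second from a continuity argument along Brownian excursions.

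\emph{Step 1 (finiteness).} I would first observe that $\{T_{\varphi}<\infty\}$ is exactly the event that $|X|$ ever crosses the moving boundary considered in Theorem~\ref{estim}, i.e. the exceedance event whose probability is computed there. Applying the first estimate of Theorem~\ref{estim} (equivalently Corollary~\ref{c12}) gives $P(T_{\varphi}<\infty)=1-\exp\!\left(-\int_{0}^{+\infty}\frac{dx}{\varphi(x)}\right)$. The hypothesis $\int_{0}^{+\infty}\frac{dx}{\varphi}=+\infty$ makes the exponential vanish, so $P(T_{\varphi}<\infty)=1$, which is the first assertion.

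\emph{Step 2 (value at $T_{\varphi}$).} Working on $\{T_{\varphi}<\infty\}$, the key point is to locate $T_{\varphi}$. Since $L^{0}(B)$ increases only on the zero set of $B$, which coincides with the zero set of $X$, and since $t\mapsto\gamma_{t}$ is constant on each excursion interval of $B$, both $\varphi(L_{t}^{0}(B))$ and $z_{\gamma_{t}}$ are constant in $t$ on the interior of every such excursion---even though $\varphi$ is only Borel---whereas $t\mapsto|X_{t}|$ is continuous. Because the zero set of $X$ equals that of $B$, the characterization recalled before Theorem~\ref{estim} (see Theorem~\ref{t11}) yields $P(\exists u\geq0,\ z_{\gamma_{u}}=0,\ u\neq\gamma_{u})=0$, so $z_{\gamma_{t}}>0$ on excursion interiors almost surely; together with local boundedness of $\varphi$ this makes the boundary level finite and strictly positive there. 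Hence $T_{\varphi}$ cannot be a zero of $X$ (where $|X_{t}|=0$ cannot exceed a positive level) and must fall in the interior of an excursion.

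On that excursion the boundary reduces to a constant, and $T_{\varphi}$ is the first time the continuous path $|X|$ strictly exceeds it; by continuity of $|X|$ the strict inequality defining $T_{\varphi}$ is attained with equality at $T_{\varphi}$, which is precisely the identity $|X_{T_{\varphi}}|=z_{\gamma_{T_{\varphi}}}/\varphi(L_{T_{\varphi}}^{0}(B))$. I expect the main obstacle to be exactly this last passage: because $\varphi$ is merely Borel, $t\mapsto\varphi(L_{t}^{0}(B))$ is a priori discontinuous, so one cannot directly equate the value at a first-crossing time with the boundary. The resolution is the flatness of $L^{0}(B)$ off the zeros, which turns the boundary into a locally constant level along each excursion and thereby reduces the matter to a continuous crossing of a fixed level by $|X|$; local boundedness of $\varphi$ is what guarantees this level is finite so that the displayed ratio is well defined.
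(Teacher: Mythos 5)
Your two-step strategy (finiteness via the exceedance probability of Theorem~\ref{estim}, then the value at $T_{\varphi}$ via constancy of the boundary along excursions) is exactly the route the paper intends --- the corollary is stated there with no proof at all, as an immediate consequence of Theorem~\ref{estim} and Corollary~\ref{c11}. But your Step~1 contains a genuine error: $\{T_{\varphi}<\infty\}$ is \emph{not} the exceedance event of Theorem~\ref{estim}. The inequality defining $T_{\varphi}$ is $\varphi(L_{t}^{0}(B))\,|X_{t}|>z_{\gamma_{t}}$, i.e.\ $|X_{t}|$ crossing the boundary $z_{\gamma_{t}}/\varphi(L_{t}^{0}(B))$, whereas Theorem~\ref{estim} computes the probability of crossing $z_{\gamma_{t}}\,\varphi(L_{t}^{0}(B))$; the two boundaries are reciprocal in $\varphi$, not equal. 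Applying Theorem~\ref{estim} correctly to $\{T_{\varphi}<\infty\}$ means replacing $\varphi$ by $1/\varphi$, which yields $P(T_{\varphi}<\infty)=1-\exp\left(-\int_{0}^{+\infty}{\varphi(x)\,dx}\right)$, so almost sure finiteness requires $\int_{0}^{+\infty}{\varphi(x)\,dx}=+\infty$, not $\int_{0}^{+\infty}{\frac{dx}{\varphi(x)}}=+\infty$. Indeed the statement is false as literally written, and the paper's own Theorem~\ref{estim} shows it: take $X=|B|$ (so $z_{\gamma_{t}}\equiv1$) and $\varphi(x)=e^{-x}$; then $\int_{0}^{+\infty}{\frac{dx}{\varphi(x)}}=+\infty$, yet $P(T_{\varphi}<\infty)=P\left(\exists t\geq0,\ |B_{t}|>e^{L_{t}^{0}(B)}\right)=1-\exp\left(-\int_{0}^{+\infty}{e^{-x}dx}\right)=1-e^{-1}<1$. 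Your proof is also internally inconsistent on this very point: Step~1 silently works with the boundary $z_{\gamma}\varphi$ (the theorem's), while Step~2 works with $z_{\gamma}/\varphi$ (the corollary's literal one); the claim that these describe the same event is where the falsity is hidden.

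What is salvageable is Step~2, which is the correct mechanism once the statement is repaired --- either redefine $T_{\varphi}=\inf\{t\geq0:\ |X_{t}|>z_{\gamma_{t}}\varphi(L_{t}^{0}(B))\}$ with conclusion $|X_{T_{\varphi}}|=z_{\gamma_{T_{\varphi}}}\varphi(L_{T_{\varphi}}^{0}(B))$ (the pairing consistent with the hypothesis $\int_{0}^{+\infty}{\frac{dx}{\varphi}}=+\infty$), or keep $T_{\varphi}$ as written and strengthen the hypothesis to $\int_{0}^{+\infty}{\varphi(x)\,dx}=+\infty$. Your observation that $L^{0}(B)$ and $\gamma_{\cdot}$ are frozen on each excursion interval, so that the Borel boundary becomes a constant level crossed continuously by $|X|$, is precisely the right idea, and your use of $P(\exists u\geq0,\ z_{\gamma_{u}}=0,\ u\neq\gamma_{u})=0$ to keep that level positive on excursion interiors is correct. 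One remaining loose end: ``$T_{\varphi}$ cannot be a zero of $X$'' does not follow only from ``$0$ cannot exceed a positive level''; you must also exclude the possibility that the crossing times accumulate at $T_{\varphi}$ from infinitely many \emph{distinct} excursions (in which case $T_{\varphi}$ would be a zero of $X$), and this needs an argument since neither $z_{\gamma}$ nor $\varphi$ is assumed bounded away from $0$. Within a single excursion your continuity argument does rule out the left endpoint; the cross-excursion case is the one to address. This is a secondary issue compared with Step~1.
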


\begin{coro}\label{c14}
Let $X$ be a process of the class $(\Sigma)$ which vanishes on $\{t\leq1: B_{t}=0\}$. If $z_{\gamma_{t}}\equiv1$ for all $t\geq0$ and $\int_{0}^{+\infty}{\frac{dx}{\varphi}}=+\infty$. Then the stopping time $T_{\varphi}=\inf\{t\geq0:\varphi(L_{t}^{0}(B))|X_{t}|>1\}$ is finite almost surely. Furthermore, if $T_{\varphi}<\infty$ and $\varphi$ is locally bounded, then
$$|X_{T_{\varphi}}|=\frac{1}{\varphi(L_{T_{\varphi}}^{0}(B))}.$$
\end{coro}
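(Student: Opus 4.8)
The plan is to recognize Corollary \ref{c14} as the specialization of Corollary \ref{c13} to the case $z_{\gamma_t}\equiv 1$, so that essentially no new computation is required. The only genuine task is to verify that the hypotheses of Corollary \ref{c13} are met, after which the two estimates and the exit identity follow by direct substitution.

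First I would establish that $X$ is a process of the class $(\Sigma)$ whose zero set coincides exactly with that of Brownian motion. This is precisely what Proposition \ref{p10} delivers: since $X\in(\Sigma)$ vanishes on $\{t\geq 0: B_t=0\}$ and its associated predictable process satisfies $z_{\gamma_t}=1$ for every $t\geq 0$, Proposition \ref{p10} yields
$$\{t\geq 0: X_t=0\}=\{t\geq 0: B_t=0\}.$$
Consequently $X$ satisfies the standing hypothesis of Corollary \ref{c13}.

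Second, with this verified, I would invoke Corollary \ref{c13} and set $z_{\gamma_t}=1$ throughout. The stopping time appearing there, $T_\varphi=\inf\{t\geq 0:\varphi(L_t^0(B))|X_t|>z_{\gamma_t}\}$, reduces verbatim to the stopping time $T_\varphi=\inf\{t\geq 0:\varphi(L_t^0(B))|X_t|>1\}$ of the present statement. Under the hypothesis $\int_0^{+\infty}\frac{dx}{\varphi}=+\infty$, Corollary \ref{c13} guarantees that $T_\varphi<\infty$ almost surely, which is the first assertion. Finally, the exit identity from Corollary \ref{c13},
$$|X_{T_\varphi}|=\frac{z_{\gamma_{T_\varphi}}}{\varphi(L_{T_\varphi}^0(B))},$$
combined with $z_{\gamma_{T_\varphi}}=1$, gives exactly $|X_{T_\varphi}|=\frac{1}{\varphi(L_{T_\varphi}^0(B))}$ when $T_\varphi<\infty$ and $\varphi$ is locally bounded.

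The proof carries no substantial obstacle: everything is inherited from Corollary \ref{c13}. The only point requiring a moment of care is confirming the zero-set coincidence through Proposition \ref{p10}, since it is exactly this coincidence—rather than the weaker hypothesis that $X$ merely vanishes on the zeros of $B$—that licenses the application of Corollary \ref{c13}.
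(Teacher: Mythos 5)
Your proposal is correct and follows what is clearly the paper's intended derivation: the paper states Corollary \ref{c14} without proof, as an immediate specialization of Corollary \ref{c13} (equivalently of Corollary \ref{c12}) to the case $z_{\gamma_{t}}\equiv1$. Your one substantive addition—invoking Proposition \ref{p10} to upgrade ``$X$ vanishes on the zeros of $B$'' to the exact zero-set coincidence required by Corollary \ref{c13}—is precisely the right bridge, and the rest is verbatim substitution.
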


\begin{coro}\label{c15}
If $\int_{0}^{+\infty}{\frac{dx}{\varphi}}=+\infty$. Then the stopping time $T_{\varphi}=\inf\{t\geq0:\varphi(L_{t}^{0}(B))|B_{t}|>1\}$ is finite almost surely. Furthermore, if $T_{\varphi}<\infty$ and $\varphi$ is locally bounded, then
$$|B_{T_{\varphi}}|=\frac{1}{\varphi(L_{T_{\varphi}}^{0}(B))}.$$
\end{coro}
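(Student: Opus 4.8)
The plan is to recognize Corollary \ref{c15} as nothing more than the special case of Corollary \ref{c14} obtained by taking $X=B$. The entire argument therefore reduces to checking that the Brownian motion itself satisfies the hypotheses of that corollary, with associated predictable process identically equal to $1$.

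First I would observe that $B$ is trivially a process of the class $(\Sigma)$: the decomposition $B_t=B_t+0$ exhibits a local martingale part $M=B$ and a finite-variation part $V\equiv 0$, and the null part $V$ vacuously satisfies $\int_0^t 1_{\{B_u\neq 0\}}\,dV_u=0$ for all $t\geq 0$. Moreover $B$ vanishes precisely on its own zero set, so in particular it vanishes on $\mathcal{Z}_1=\{t\leq 1: B_t=0\}$, which is the standing hypothesis of Corollary \ref{c14}.

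Next I would identify the predictable process $z$ associated with $X=B$. According to Theorem \ref{th1}, this process is recovered through the limit $z_{\gamma_t}=\lim_{u\to\gamma_t}|X_u|/|B_u|$; for $X=B$ the ratio $|B_u|/|B_u|$ is identically $1$, whence $z_{\gamma_t}\equiv 1$ for all $t\geq 0$. Thus $B$ meets every requirement of Corollary \ref{c14}: it is an element of $(\Sigma)$ vanishing on $\mathcal{Z}_1$ with $z_{\gamma_t}\equiv 1$, and the remaining assumption $\int_0^{+\infty} dx/\varphi=+\infty$ is exactly the hypothesis of the present corollary.

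Finally I would invoke Corollary \ref{c14} with $X=B$. After the substitution, its two conclusions assert that $T_\varphi=\inf\{t\geq 0:\varphi(L_t^0(B))|B_t|>1\}$ is finite almost surely and that, on $\{T_\varphi<\infty\}$ with $\varphi$ locally bounded, one has $|B_{T_\varphi}|=1/\varphi(L_{T_\varphi}^0(B))$, which is precisely the claim. There is no genuine obstacle here; the only point meriting care is the verification that $z_{\gamma_t}\equiv 1$ for the Brownian motion, which is immediate from the limit formula of Theorem \ref{th1}.
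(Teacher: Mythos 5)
Your proposal is correct and matches the paper's intended derivation: the paper states Corollary \ref{c15} without a separate proof, precisely because it is the specialization of Corollary \ref{c14} to $X=B$, which is trivially of class $(\Sigma)$ (take $M=B$, $V\equiv 0$) with associated process $z_{\gamma_t}\equiv 1$. Your explicit verification of these hypotheses, including the identification $z_{\gamma_t}=\lim_{u\to\gamma_t}|B_u|/|B_u|=1$ via Theorem \ref{th1}, is exactly the reasoning the paper leaves implicit.
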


\subsection{Representation in terms of last passage time of Brownian motion}

In this subsection, we state results inspired by a representation formula obtained for stochastic processes of the class $(\Sigma)$ by Cheridito, Nikeghbali and Platen \cite{pat}. More precisely, they proved that under some assumptions, one has
$$X_{t}=E[X_{\infty}1_{\{g<t\}}|\mathcal{F}_{t}],$$
with, $g=\sup\{t\geq0: X_{t}=0\}$ and $X_{\infty}=\lim_{t\to+\infty}{X_{t}}$. Here, we shall show that some processes of the class $(\Sigma)$ vanishing on $\mathcal{Z}_{1}=\{t\leq1: B_{t}=0\}$ hold
 $$|X_{t}|=E[X_{\infty}1_{\{\gamma<t\}}|\mathcal{F}_{t}],$$
where, $\gamma=\sup\{t\geq0: B_{t}=0\}$ and $X_{\infty}=\lim_{t\to+\infty}{|X_{t}|}$.

Remark that $\gamma$ is not a stopping time with respect to the filtration $(\mathcal{F}_{t})_{t\geq0}$. Throughout of this subsection, we represent by $(\mathcal{G}_{t})_{t\geq0}$, the progressive enlargement of the filtration $(\mathcal{F}_{t})_{t\geq0}$ with respect to $\gamma$ and we consider that $R$ is the Azema sub-martingale associated with $\gamma$. More precisely, $R$ is defined by:
$$R_{t}=P\left(\gamma<t|\mathcal{F}_{t}\right).$$
\begin{prop}
If $X$ is a continuous stochastic process of the class $(\Sigma)$ which vanishes on $\{t\leq1; B_{t}=0\}$ such that $\langle X, R\rangle=0$. Then, $|X|R$ is a local martingale.
\end{prop}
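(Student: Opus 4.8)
The plan is to compute the finite-variation part of the product $|X|R$ by integration by parts and to show that every non-martingale contribution vanishes. First I would record the two canonical decompositions. Proposition \ref{pm} gives $|X|_t = N_t + A_t$, where $N$ is a continuous local martingale and $A_t = \int_0^t z_s\,dL^0_s(B)$ is continuous increasing, so that the measure $dA_t = z_t\,dL^0_t(B)$ is carried by the zero set $\{t: B_t=0\}$. Likewise write $R_t = m_t + a_t$ for the decomposition of the Azema submartingale, with $m$ a continuous local martingale and $a$ continuous increasing; by the theory of honest times recalled in the appendix, $da_t$ is carried by the closure of the set on which $\gamma$ lives, namely $\{t\le 1: B_t=0\}$. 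Integration by parts then yields
\[
|X|_t R_t = \int_0^t |X|_s\,dR_s + \int_0^t R_s\,d|X|_s + \langle |X|, R\rangle_t ,
\]
so that everything reduces to proving $\int_0^t |X|_s\,da_s + \int_0^t R_s\,dA_s + \langle |X|, R\rangle_t \equiv 0$.

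I would next dispatch the bracket term. Since $N$ and $m$ are continuous local martingales driven by $B$, the measure $d\langle N,m\rangle$ is absolutely continuous with respect to $ds$; as $\{s: B_s=0\}$ is Lebesgue-negligible, $\langle |X|,R\rangle$ does not charge it. On the complement $\{X\neq 0\}$ the relation $X_t = K_{g_t}|X_t|$, with $K_{g_t}\in\{-1,1\}$ locally constant on each excursion interval, gives $dN = K_g\,dM^X$, where $M^X$ is the martingale part of $X$; hence
\[
\langle |X|, R\rangle_t = \int_0^t 1_{\{X_s\neq 0\}} K_{g_s}\,d\langle M^X, R\rangle_s = \int_0^t K_{g_s}\,d\langle X, R\rangle_s = 0 ,
\]
using the hypothesis $\langle X, R\rangle = 0$ and that the integrand is bounded.

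Then come the two finite-variation integrals. The term $\int_0^t |X|_s\,da_s$ vanishes because $da$ is carried by $\{s\le 1: B_s=0\}\subseteq\{X_s=0\}$, on which $|X_s|=0$. For $\int_0^t R_s\,dA_s$ I would use that every zero $s\le 1$ of $B$ satisfies $s\le\gamma$, so that $1_{\{\gamma<s\}}\,dA_s\equiv 0$; since $R_s = P(\gamma<s\mid\mathcal F_s)$ is the projection of $1_{\{\gamma<s\}}$ and $dA$ is predictable, the projection identity gives $E\int_0^t R_s\,dA_s = E\int_0^t 1_{\{\gamma<s\}}\,dA_s = 0$, and because $R\ge 0$ and $dA\ge 0$ this forces $\int_0^t R_s\,dA_s = 0$ almost surely. (Pathwise this reflects that from any zero of $B$ the path returns to $0$ before time $1$ almost surely, so $R=0$ on the support of $dL^0(B)$ up to $\gamma$.) Combining the three vanishing contributions leaves $|X|_t R_t = \int_0^t |X|_s\,dm_s + \int_0^t R_s\,dN_s$, a sum of stochastic integrals of locally bounded predictable integrands against continuous local martingales, hence a local martingale.

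I expect the two support/projection arguments to be the delicate steps: justifying rigorously that $\langle |X|, R\rangle = 0$ (rather than merely that the covariation integrand is supported off the zero set), and that $R$ integrates to zero against $dA$. The first rests on the Lebesgue-negligibility of $\{B=0\}$ together with the excursion-wise sign relation $X=K_g|X|$; the second is essentially the statement that $\gamma$ is honest and that the Azema submartingale vanishes on the zero set strictly before its terminal zero, which is where the continuity of $R$ and the structure of the last-exit time of $B$ must be invoked.
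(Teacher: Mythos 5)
Your proof takes the same route as the paper's: both decompositions come from Proposition \ref{pm} (the paper notes that $R$, being the Azema submartingale of $\gamma$, is itself a process of class $(\Sigma)$ vanishing on the zero set of $B$, so the same proposition applies to it), followed by integration by parts, the hypothesis to kill the bracket, and the support of $dL^{0}(B)$ to kill the finite-variation terms. One difference of detail: for $\int_0^t R_s\,dA_s$ the paper argues pathwise --- $R$ itself vanishes on $\{B=0\}$, since on the $\mathcal F_s$-measurable event $\{B_s=0\}$ (with $s\le 1$) one has $\gamma\ge s$ and hence $1_{\{B_s=0\}}R_s=E\left[1_{\{\gamma<s\}}1_{\{B_s=0\}}\,|\,\mathcal F_s\right]=0$ --- so this integral dies for exactly the same reason as $\int_0^t |X_s|\,da_s$. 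Your optional-projection argument reaches the same conclusion and is valid, but it is a detour; your own parenthetical remark is the paper's actual argument. (Both you and the paper silently restrict attention to zeros of $B$ before time $1$; zeros after time $1$ also charge $dL^{0}(B)$ and neither proof addresses them, so I set that issue aside.)

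The one genuine gap is in your justification of $\langle|X|,R\rangle=0$. Absolute continuity of $d\langle N,m\rangle$ with respect to $ds$ rules out charge on $\{B=0\}$, but the identity $\langle|X|,R\rangle_t=\int_0^t 1_{\{X_s\neq0\}}K_{g_s}\,d\langle M^{X},R\rangle_s$ requires that the bracket put no mass on $\{X=0\}$, a larger set which --- unlike $\{B=0\}$ --- can have positive Lebesgue measure (think of a process of class $(\Sigma)$ that sits at $0$ on a whole time interval), so Lebesgue-negligibility cannot do this job. The repair is short: the occupation-times formula gives $\int_0^{\cdot} 1_{\{|X_s|=0\}}\,d\langle|X|\rangle_s=0$, hence by Kunita--Watanabe $d\langle|X|,R\rangle$ does not charge $\{X=0\}$ either; or, more simply, bypass the excursion argument altogether with Tanaka's formula, $|X_t|=\int_0^t {\rm sgn}(X_s)\,dX_s+L_t^{0}(X)$, which yields $\langle|X|,R\rangle_t=\int_0^t {\rm sgn}(X_s)\,d\langle X,R\rangle_s=0$ in one line from the hypothesis. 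This is precisely the step the paper asserts without proof (``since $\langle|X|,R\rangle=0$''), so once patched, your write-up is if anything more complete than the paper's.
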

\begin{proof}
We know that $R$ is a sub-martingale of the class $(\Sigma_{H}^{0})$. Hence, according to Proposition \ref{pm}, there exists a local martingale $m$ such that
$$R_{t}=m_{t}+\int_{0}^{t}{z_{s}dL_{s}^{0}(B)},$$
where $z$ is a predictable process with suitable integrability properties.

As far as, there exists a local martingale $M$ and a predictable process $z^{'}$ such that
$$|X_{t}|=M_{t}+\int_{0}^{t}{z^{'}_{s}dL_{s}^{0}(B)},$$
since, $|X|$ is a process of the class $(\Sigma)$ vanishing on the zero set of $B$. It follows from an integration by parts:
$$R_{t}|X_{t}|=\int_{0}^{t}{R_{s}d|X_{s}|}+\int_{0}^{t}{|X_{s}|dR_{s}}+\langle |X|, R\rangle_{t}.$$
Since $\langle |X|, R\rangle=0$, it entails that
$$R_{t}|X_{t}|=\int_{0}^{t}{R_{s}d|X_{s}|}+\int_{0}^{t}{|X_{s}|dR_{s}}$$
$$\hspace{5.5cm}=\int_{0}^{t}{R_{s}dM_{s}}+\int_{0}^{t}{R_{s}z^{'}_{s}dL_{s}^{0}(B)}+\int_{0}^{t}{|X_{s}|dm_{s}}+\int_{0}^{t}{X_{s}z_{s}dL_{s}^{0}(B)}.$$
But, the random measure $dL^{0}_{t}(B)$ is carried by the zero set of $B$. Then,
$$\int_{0}^{t}{R_{s}z^{'}_{s}dL_{s}^{0}(B)}=\int_{0}^{t}{|X_{s}|z_{s}dL_{s}^{0}(B)}=0,$$
since $R$ and $|X|$ vanish on the zero set of $B$.
Therefore,
$$R_{t}|X_{t}|=\int_{0}^{t}{R_{s}dM_{s}}+\int_{0}^{t}{|X_{s}|dm_{s}}.$$
Consequently, $(R_{t}|X_{t}|; t\geq0)$ is a local martingale.
\end{proof}

\begin{coro}\label{rep1}
Let $X$ be a process of the class $(\Sigma)$ vanishing on $\{t\leq1; B_{t}=0\}$ such that \\$\langle X, R\rangle=0$. If, $R|X|$ is a uniformly integrable martingale. Then, there exists a random variable $X_{\infty}$ such that $\lim_{t\to+\infty}{|X_{t}|}=X_{\infty}$ a.s and in $L^{1}$ and we have $\forall t\geq0$,
$$|X_{t}|=E\left[X_{\infty}1_{\{\gamma<t\}}|\mathcal{F}_{t}\right].$$
\end{coro}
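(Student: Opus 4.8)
The plan is to combine the previous proposition (which gives that $R|X|$ is a local martingale) with the general theory of honest times and the Azéma supermartingale recalled in the appendix. Since the preceding proposition establishes that $(R_t|X_t|;\,t\geq 0)$ is a local martingale under the hypothesis $\langle X,R\rangle=0$, the added hypothesis here is precisely that this local martingale is uniformly integrable. The first step is therefore to invoke the martingale convergence theorem: a uniformly integrable martingale converges almost surely and in $L^1$ to an integrable terminal variable, and equals the conditional expectation of that terminal variable at each time.

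First I would set $N_t = R_t|X_t|$, which by the previous proposition is a uniformly integrable martingale. By the convergence theorem there is an integrable $N_\infty$ with $N_t = E[N_\infty\mid\mathcal{F}_t]$ and $N_t\to N_\infty$ a.s.\ and in $L^1$. The key observation is the behaviour of $R$ at infinity: since $\gamma=\sup\{t\geq 0: B_t=0\}$ is the last zero of $B$ and is almost surely finite, one has $R_t = P(\gamma<t\mid\mathcal{F}_t)\to 1$ as $t\to+\infty$ (the event $\{\gamma<t\}$ increases to the full space). Consequently $|X_t| = N_t/R_t \to N_\infty$ on the set where the limit is meaningful, so $X_\infty:=\lim_{t\to+\infty}|X_t|$ exists a.s.\ and in $L^1$ and coincides with $N_\infty$. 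This pins down the terminal variable.

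The remaining and genuinely delicate step is to pass from the identity $R_t|X_t| = E[X_\infty\mid\mathcal{F}_t]$ to the stated formula $|X_t| = E[X_\infty\,1_{\{\gamma<t\}}\mid\mathcal{F}_t]$. The bridge is the defining property $R_t = P(\gamma<t\mid\mathcal{F}_t)$ together with the fact that on the excursion straddling level $t$ the process $|X|$ is, up to the multiplicative predictable factor, governed by the post-$\gamma_t$ behaviour. Concretely I would argue that $E[X_\infty 1_{\{\gamma<t\}}\mid\mathcal{F}_t] = |X_t|\,P(\gamma<t\mid\mathcal{F}_t)$ by showing that, conditionally on $\mathcal{F}_t$, the factor $1_{\{\gamma<t\}}$ and the terminal value $X_\infty$ decouple so that the conditional expectation factorises into $|X_t|\cdot R_t$; this uses that $|X_t|$ is $\mathcal{F}_t$-measurable and that the conditional law of $X_\infty$ given both $\mathcal{F}_t$ and the position of the last zero reduces, on $\{\gamma<t\}$, to a multiple of $|X_t|$. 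Combining with $R_t|X_t| = E[X_\infty\mid\mathcal{F}_t]$ yields the claim.

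The main obstacle I anticipate is justifying this factorisation rigorously rather than heuristically: one must verify that $E[X_\infty 1_{\{\gamma<t\}}\mid\mathcal{F}_t]$ equals $R_t|X_t|$ and not merely something comparable, which requires care with the honest-time structure of $\gamma$ and the fact that $|X|$ and $R$ both vanish on the zero set of $B$ (so their product picks up no contribution from the local-time term, exactly as exploited in the previous proposition). I would lean on the enlargement-of-filtration results collected in the appendix to handle the conditioning on $\{\gamma<t\}$, and on the representation $|X_t|=z_{\gamma_t}|B_t|\exp(\cdots)$ from Theorem \ref{th1} to make the decoupling explicit on each excursion interval $]\gamma_t,t]$.
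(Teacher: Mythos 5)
Your opening step is sound and matches the paper: by the preceding proposition $N=R|X|$ is a local martingale, the hypothesis makes it a uniformly integrable martingale, $R_t\to 1$, and hence $|X_t|\to X_\infty=N_\infty$ a.s.\ and in $L^1$ with $R_t|X_t|=E[X_\infty\mid\mathcal{F}_t]$. The problem is the bridge you build after that. You propose to prove the factorisation $E[X_\infty 1_{\{\gamma<t\}}\mid\mathcal{F}_t]=|X_t|\,R_t$ and to ``combine'' it with $R_t|X_t|=E[X_\infty\mid\mathcal{F}_t]$. But combining those two identities gives
$$E\left[X_\infty 1_{\{\gamma<t\}}\mid\mathcal{F}_t\right]=E\left[X_\infty\mid\mathcal{F}_t\right],\qquad\text{i.e.}\qquad E\left[X_\infty 1_{\{\gamma\geq t\}}\mid\mathcal{F}_t\right]=0,$$
which is not the stated formula: the goal $|X_t|=E[X_\infty 1_{\{\gamma<t\}}\mid\mathcal{F}_t]$ differs from your factorised right-hand side by the factor $R_t$, and $1-R_t=P(\gamma\geq t\mid\mathcal{F}_t)$ is strictly positive a.s.\ for $t<1$. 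So even if your decoupling were granted, your route closes only if one additionally proves $|X_t|(1-R_t)=0$, which is exactly the nontrivial content left unaddressed. Moreover the decoupling itself is only asserted heuristically (``the conditional law of $X_\infty$ \ldots reduces to a multiple of $|X_t|$'') and is never reduced to any precise result; nothing in the appendix delivers a fixed-$t$ conditional factorisation of this kind.

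The paper's mechanism is different and is the missing idea. Since $R|X|$ is a uniformly integrable martingale vanishing on $H=\{t\geq0: R_t=0\}$ and $\gamma=\sup H$, the quotient theorem (Theorem \ref{qot}) applies to it and shows that $(R|X|)_{\gamma+t}/R_{\gamma+t}=|X_{\gamma+t}|$, $t>0$, is a uniformly integrable martingale for $(\mathcal{G}_{\gamma+t})_{t>0}$, whence $|X_{\gamma+t}|=E[X_\infty\mid\mathcal{G}_{\gamma+t}]$. On the other hand, the candidate process $Y_t=E[X_\infty 1_{\{\gamma<t\}}\mid\mathcal{F}_t]$ also vanishes on $H$, and by Jeulin's Lemma \ref{j80} one has $\mathcal{G}_{\gamma+t}=\mathcal{F}_{\gamma+t}$ for $t>0$, so $Y_{\gamma+t}=E[X_\infty\mid\mathcal{G}_{\gamma+t}]=|X_{\gamma+t}|$. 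The conclusion $|X|=Y$ then follows from the uniqueness statement in Proposition \ref{rho}: an $(\mathcal{F}_t)$-optional process vanishing on $H$ is determined by its values after $\gamma$. In short, the identification is made pathwise after the last zero via the enlargement machinery, not through a conditional-independence factorisation at each fixed time; that uniqueness step is what your proposal lacks and what no amount of rigour added to the factorisation would supply.
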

\begin{proof}
We know by assumptions $R|X|$ is a uniformly integrable martingale. Thus, there exists a random variable $X_{\infty}$ such that 
$$X_{\infty}=\lim_{t\to+\infty}{R_{t}|X_{t}|}=\lim_{t\to+\infty}{|X_{t}|}.$$ 
But, we can see that $R|X|$ vanishes on $\{t\geq0; R_{t}=0\}$ and $\gamma=\sup\{t\geq0; R_{t}=0\}$. Hence, by an application of quotient theorem (Theorem 3.2 of \cite{1}. We recall it in Theorem \ref{qot} of Appendix), we obtain that $(|X_{t+\gamma}|;t>0)$ is a uniformly integrable martingale with respect to the filtration $(\mathcal{G}_{t+\gamma}; t>0)$. Thus, we have what follows
$$|X_{t+\gamma}|=E\left[X_{\infty}|\mathcal{G}_{t+\gamma}\right].$$
That implies the following equality
$$\rho(|X_{\cdot+\gamma}|)_{t}=\rho\left(E\left[X_{\infty}|\mathcal{G}_{\cdot+\gamma}\right]\right)_{t},$$
where $\rho$ is the function defined in Proposition 3.1 of \cite{1} (We recall it in Proposition \ref{rho} of Appendix).
But according to Proposition \ref{rho}, $|X_{t}|=\rho(|X_{\cdot+\gamma}|)_{t}$ since $|X|$ vanishes on $\{t\geq0; R_{t}=0\}$. Thus,
$$|X_{t}|=\rho\left(E\left[X_{\infty}|\mathcal{G}_{\cdot+\gamma}\right]\right)_{t}.$$
Now, consider the following optional process
$$Y_{t}=E\left[X_{\infty}1_{\{\gamma<t\}}|\mathcal{F}_{t}\right].$$
We can see that $Y$ vanishes on $\{t\geq0; R_{t}=0\}$ and $\forall t>0$,
$$Y_{t+\gamma}=E\left[X_{\infty}|\mathcal{F}_{t+\gamma}\right].$$
From Lemma (5,7) of \cite{jeulin80} (see Lemma \ref{j80} of appendix), the following equality holds:
$$\mathcal{G}_{t+\gamma}=\mathcal{F}_{t+\gamma}\text{, }\forall t>0.$$
Therefore,
$$Y_{t+\gamma}=E\left[X_{\infty}|\mathcal{G}_{t+\gamma}\right]=|X_{t+\gamma}|.$$
Consequently, it entails from uniqueness of Proposition \ref{rho} that
$$|X_{t}|=Y_{t}=E\left[X_{\infty}1_{\{\gamma<t\}}|\mathcal{F}_{t}\right].$$
\end{proof}

\begin{theorem}\label{rep2}
Let $X$ be an element of the class $(\Sigma D)$ which vanishes on $\{t\leq1; B_{t}=0\}$. Then, there exists a random variable such that $\lim_{t\to+\infty}{X_{t}}=X_{\infty}$ and $\forall t\geq0$,
$$X_{t}=E\left[X_{\infty}1_{\{\gamma<t\}}|\mathcal{F}_{t}\right].$$
\end{theorem}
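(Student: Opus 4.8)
The plan is to derive Theorem \ref{rep2} from Corollary \ref{rep1} by verifying its two hypotheses: that $\langle X, R\rangle = 0$ and that $R|X|$ is a uniformly integrable martingale. Since $X$ is assumed to be of class $(\Sigma D)$, we have both $X \in (\Sigma)$ and $X$ of class $(D)$, which should give us the integrability control we need. The target identity $X_t = E[X_\infty 1_{\{\gamma<t\}}|\mathcal{F}_t]$ is exactly the conclusion of Corollary \ref{rep1} (written for $|X|$ there), so the main work is to justify applying that corollary and then to pass from $|X|$ to $X$ itself.

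First I would recall that $R$, the Azema submartingale $R_t = P(\gamma<t|\mathcal{F}_t)$ associated with the last zero $\gamma$ of $B$, is a process of the class $(\Sigma)$ vanishing on the zero set of $B$, as already used in the preceding proposition. The key structural fact is that $\gamma = \sup\{t: B_t = 0\}$ is an honest time, so $R$ is a bounded $(0,1)$-valued submartingale whose finite variation part only increases on $\mathcal{Z}_1$. Since $X$ also vanishes on $\{t\leq 1: B_t=0\}$, both $X$ (hence $|X|$) and $R$ have their singular parts carried by the same zero set of $B$. I would argue the bracket $\langle X, R\rangle$ vanishes because the martingale parts of $|X|$ and $R$ are both driven by $B$ through the representations of Proposition \ref{pm}, and their mutual variation, being supported on the time-changed local time, collapses to zero; this is the step that invokes the structure $dL^0_s(B)$-a.s. $\gamma_s = s$ exactly as in the proof of Proposition \ref{pm}.

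Next I would establish uniform integrability of $R|X|$. Because $X$ is of class $(D)$, the family $\{|X_\tau|: \tau \text{ finite stopping time}\}$ is uniformly integrable, and since $0\leq R_t\leq 1$, the products $R_\tau |X_\tau|$ are dominated and thus also uniformly integrable. Combined with the fact that $R|X|$ is a local martingale (the previous proposition), a uniformly integrable local martingale of class $(D)$ is a genuine uniformly integrable martingale. This lets me invoke Corollary \ref{rep1}, which yields a random variable $X_\infty = \lim_{t\to+\infty}|X_t|$ with $|X_t| = E[X_\infty 1_{\{\gamma<t\}}|\mathcal{F}_t]$.

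Finally I would remove the absolute values. On $\{t>\gamma\}$ the process $X$ keeps a constant sign on each excursion interval and, past the last zero of $B$, the sign stabilizes, so $K_{g_t}$ is eventually constant and one identifies $\lim_{t\to\infty} X_t$ with $\lim_{t\to\infty}|X_t|$ (up to the eventual sign), giving the limit $X_\infty$ in the stated (signed) sense; on the event $\{\gamma < t\}$ appearing inside the conditional expectation, the balayage relation $X_t = K_{g_t}|X_t|$ together with the fact that after $\gamma$ the factor $K_{g_t}$ is measurable and constant lets me replace $|X_t|$ by $X_t$ in both sides. The \textbf{main obstacle} is this last sign bookkeeping: making rigorous that multiplying the identity for $|X|$ by the appropriate sign (via $K_{g_t}$) is legitimate inside the conditional expectation and that the resulting $X_\infty$ is well-defined, which requires care with how $K$ behaves across the last excursion and uses $X_t = K_{g_t}|X_t|$ as in Theorem \ref{th1}.
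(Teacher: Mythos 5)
Your reduction of Theorem \ref{rep2} to Corollary \ref{rep1} cannot work, because that corollary (and the proposition preceding it) carries the extra hypothesis $\langle X,R\rangle=0$, and this hypothesis is \emph{not} a consequence of $X\in(\Sigma D)$: it is an additional assumption which is generically false, and your justification for it gets the behaviour of brackets backwards. The martingale parts of $|X|$ and of $R$ are both stochastic integrals with respect to the same Brownian motion $B$ (by Corollary \ref{c6} for $|X|$, and by the classical Az\'ema--Yor decomposition $R_t=\Phi\bigl(|B_t|/\sqrt{1-t}\bigr)$ for $R$), so their bracket is of the form $\int_0^t h_s k_s\,ds$, an absolutely continuous process with no reason to vanish; it is certainly not ``supported on the local time''. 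Already in the basic example $X_t=B_{t\wedge1}$, which is in $(\Sigma D)$ and vanishes on $\{t\leq1:B_t=0\}$, one has $\langle X,R\rangle_t=\int_0^{t\wedge1}\Phi'\bigl(|B_s|/\sqrt{1-s}\bigr)\frac{\operatorname{sgn}(B_s)}{\sqrt{1-s}}\,ds\neq0$, and without the bracket condition the integration by parts in the proposition preceding Corollary \ref{rep1} produces a drift term $\langle|X|,R\rangle$, so $R|X|$ is not even a local martingale. This is precisely why the paper does \emph{not} derive Theorem \ref{rep2} from Corollary \ref{rep1}: instead, the class $(\Sigma D)$ hypothesis is used to place the \emph{signed} process $X$ in the Az\'ema--Yor class $\mathcal{R}(H)$, $H$ the zero set of $B$, and the quotient theorem (Theorem \ref{qot}) is applied directly to $X$. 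Since $R_{t+\gamma}=1$ for $t>0$, the quotient $X_{t+\gamma}/R_{t+\gamma}=X_{t+\gamma}$ is a uniformly integrable $(\mathcal{G}_{t+\gamma})$-martingale; then Jeulin's Lemma \ref{j80} gives $\mathcal{G}_{t+\gamma}=\mathcal{F}_{t+\gamma}$, and the uniqueness statement in Proposition \ref{rho} identifies $X_t$ with $Y_t=E\left[X_{\infty}1_{\{\gamma<t\}}|\mathcal{F}_{t}\right]$. No bracket condition and no absolute values enter at any point.

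Even setting the bracket issue aside, your last step does not close. Corollary \ref{rep1} would only give the identity for $|X|$ with the non-negative limit $\lim_t|X_t|$, whereas the theorem asserts the identity for the signed $X$ with the signed limit. Your fix, multiplying by $K_{g_t}$ inside the conditional expectation, requires that on $\{\gamma<t\}$ the sign $K_{g_t}$ already equals the terminal sign of $X$; but $X$ is only assumed to vanish \emph{on} the zero set of $B$, so it may have further zeros and sign changes after $\gamma$ (and after $t$), and $K_{g_s}$ for $s>t$ need not agree with $K_{g_t}$. You correctly flag this as the main obstacle, but it is not merely bookkeeping: the pointwise identity $K_{g_t}\lim_s|X_s|=\lim_s X_s$ on $\{\gamma<t\}$ that your argument needs is false in general, so the sign cannot be restored this way. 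The paper's route through the quotient theorem avoids the problem entirely by never splitting $X$ into sign and modulus.
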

\begin{proof}
From an application of quotient theorem, we can affirm that the process $\left(\frac{X_{t+\gamma}}{R_{t+\gamma}}; t>0\right)$ is a uniformly integrable martingale with respect to the filtration $(\mathcal{G}_{t+\gamma}; t>0)$. But, we remark that
$$R_{t+\gamma}=1\text{, }t>0.$$
Hence, it follows that $\left(X_{t+\gamma}; t>0\right)$ is a uniformly integrable martingale with respect to the filtration $(\mathcal{G}_{t+\gamma}; t>0)$. Then, we have what follows,
$$X_{t+\gamma}=E\left[X_{\infty}|\mathcal{G}_{t+\gamma}\right]\text{, }t>0.$$
Thus, it entails that
$$\rho(X_{\cdot+\gamma})_{t}=\rho\left(E\left[X_{\infty}|\mathcal{G}_{\cdot+\gamma}\right]\right)_{t}.$$
But according to Proposition \ref{rho}, $X_{t}=\rho(X_{\cdot+\gamma})_{t}$ since $X$ vanishes on $\{t\geq0; R_{t}=0\}$. Therefore,
$$X_{t}=\rho\left(E\left[X_{\infty}|\mathcal{G}_{\cdot+\gamma}\right]\right)_{t}.$$
Now, consider the following optional process
$$Y_{t}=E\left[X_{\infty}1_{\{\gamma<t\}}|\mathcal{F}_{t}\right].$$
We can see that $Y$ vanishes on $\{t\leq1; B_{t}=0\}=\{t\leq1; R_{t}=0\}$ and $\forall t>0$,
$$Y_{t+\gamma}=E\left[X_{\infty}|\mathcal{F}_{t+\gamma}\right].$$
From Lemma (5,7) of \cite{jeulin80}, the following equality holds:
$$\mathcal{G}_{t+\gamma}=\mathcal{F}_{t+\gamma}\text{, }\forall t>0.$$
Therefore,
$$Y_{t+\gamma}=E\left[X_{\infty}|\mathcal{G}_{t+\gamma}\right]=X_{t+\gamma}.$$
Consequently, we conclude from uniqueness of Proposition \ref{rho} that
$$X_{t}=Y_{t}=E\left[X_{\infty}1_{\{\gamma<t\}}|\mathcal{F}_{t}\right].$$
This completes the proof.
\end{proof}

\begin{coro}
Let $g<\infty$ be an honest time such that $\gamma\leq g$ almost surely. Then,
$$g\overset{law}{=}\gamma.$$
\end{coro}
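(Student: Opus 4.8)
The plan is to pass from the pathwise domination $\gamma\le g$ to an identity of laws by comparing the Azéma supermartingales of the two honest times and then exploiting the uniqueness built into the representation results of this subsection. First I would attach to $g$ its Azéma supermartingale $Z^g_t=P(g>t\mid\mathcal F_t)$ and to $\gamma$ the supermartingale $Z^\gamma_t=P(\gamma>t\mid\mathcal F_t)=1-R_t$, where $R$ is the Azéma submartingale already used above. Since $\gamma\le g$ almost surely we have the inclusion $\{\gamma>t\}\subseteq\{g>t\}$ for every $t$, so conditioning on $\mathcal F_t$ gives $Z^\gamma_t\le Z^g_t$ and, after taking expectations, $P(\gamma>t)\le P(g>t)$ for all $t$. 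Thus $\gamma$ is stochastically dominated by $g$, and it remains only to produce the reverse inequality.

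Next I would record the elementary but useful reduction that, because $\gamma\le g$ holds pathwise, the equality of laws $g\overset{law}{=}\gamma$ is in fact equivalent to $g=\gamma$ almost surely: if $\varphi$ is bounded and strictly increasing, then $\varphi(g)\ge\varphi(\gamma)$ together with $E[\varphi(g)]=E[\varphi(\gamma)]$ forces $\varphi(g)=\varphi(\gamma)$ a.s. Hence it suffices to show that the two supermartingales coincide, $Z^g=Z^\gamma$, equivalently that the increasing parts in their class $(\Sigma)$ decompositions agree. Here I would use that $g$ is a finite honest time, so that $1-Z^g=P(g<\,\cdot\mid\mathcal F_{\,\cdot})$ is again a bounded submartingale of class $(\Sigma D)$, and then apply the representation machinery to both times at once: combining Theorem \ref{rep2} with the identity $\mathcal G_{t+\gamma}=\mathcal F_{t+\gamma}$ of \cite{jeulin80}, the common conditional form $E[\,\cdot\,\mathbf 1_{\{\cdot<t\}}\mid\mathcal F_t]$ and the uniqueness it carries would pin down the finite-variation parts, and therefore the one-dimensional marginals, of the two times.

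The main obstacle is precisely this reverse comparison: the domination $\gamma\le g$ by itself yields only $Z^\gamma\le Z^g$, and upgrading this to an equality genuinely requires the honesty of $g$ and the extremal role played by $\gamma$ as the end of the zero set of $B$. This is where the last-passage-time results of \cite{1} and the enlargement identities of \cite{jeulin80} must enter, to rule out a strictly larger finite honest time sharing the relevant structure. Once $Z^g=Z^\gamma$ is established, the marginals $P(g>t)=P(\gamma>t)$ coincide for every $t$, whence $g\overset{law}{=}\gamma$; by the reduction above this even sharpens to $g=\gamma$ almost surely.
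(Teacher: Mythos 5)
Your proposal assembles the right ingredients (the Az\'ema submartingale of $g$, its membership in the class $(\Sigma D)$, Theorem \ref{rep2}, and the enlargement identity of \cite{jeulin80}), but it stops short of the one step that actually proves the corollary, and you yourself flag that step as ``the main obstacle'' without resolving it. The hypothesis $\gamma\le g$ is not there to give the one-sided comparison $Z^{\gamma}\le Z^{g}$ --- that stochastic domination is a detour that leads nowhere. Its real role is to verify the hypothesis of Theorem \ref{rep2} for the process $X_{t}=P(g<t\mid\mathcal{F}_{t})$: on the event $\{B_{t}=0\}$ one has $t\le\gamma\le g$ almost surely, so $\{g<t\}\cap\{B_{t}=0\}\subseteq\{\gamma<t\}\cap\{B_{t}=0\}=\emptyset$ up to null sets, and since $\{B_{t}=0\}\in\mathcal{F}_{t}$ this gives $1_{\{B_{t}=0\}}P(g<t\mid\mathcal{F}_{t})=0$ a.s.; hence $X$ vanishes on the zero set of $B$. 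As $X$ is of class $(\Sigma D)$ (the standard property of honest times that you correctly invoke), Theorem \ref{rep2} applies directly and yields
$$X_{t}=E\left[X_{\infty}1_{\{\gamma<t\}}\mid\mathcal{F}_{t}\right],$$
and because $g<\infty$ almost surely the terminal value is $X_{\infty}=\lim_{t\to\infty}P(g<t\mid\mathcal{F}_{t})=1_{\{g<\infty\}}=1$. Therefore $P(g<t\mid\mathcal{F}_{t})=P(\gamma<t\mid\mathcal{F}_{t})$, and taking expectations gives $P(g<t)=P(\gamma<t)$ for every $t$, which is the claim. This is exactly the paper's proof; there is no need to ``rule out a strictly larger finite honest time'' by comparing finite-variation parts of the two decompositions, because the identification of $X_{\infty}$ as the constant $1$ already forces $Z^{g}=Z^{\gamma}$.

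So the gap is twofold and concrete: you never check that $P(g<\cdot\mid\mathcal{F}_{\cdot})$ vanishes on $\{t: B_{t}=0\}$ --- the only place where the assumption $\gamma\le g$ is actually needed --- and you never use $g<\infty$ to identify $X_{\infty}=1$, which is what converts the representation formula into the equality of the two Az\'ema submartingales you were seeking. Your side remark that, given the pathwise domination, equality in law upgrades to $g=\gamma$ almost surely is correct (take a bounded strictly increasing $\varphi$; then $\varphi(g)-\varphi(\gamma)\ge0$ has zero expectation), but it is a consequence of the corollary, not a route to it.
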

\begin{proof}
We can see that $X_{t}=P(g<t|\mathcal{F}_{t})$ is a process of the class $(\Sigma)$ which vanishes on the zero set of $B$.
Hence, according to Theorem \ref{rep2}, one has
$$X_{t}=E\left[X_{\infty}1_{\{\gamma<t\}}|\mathcal{F}_{t}\right]=P(\gamma<t|\mathcal{F}_{t}).$$
Therefore,
$$P(g<t|\mathcal{F}_{t})=P(\gamma<t|\mathcal{F}_{t}).$$
That implies that,
$$E\left(P(g<t|\mathcal{F}_{t})\right)=E\left(P(\gamma<t|\mathcal{F}_{t})\right),$$
where, $E$ is the expectation.
Consequently, we have
$$P(g<t)=P(\gamma<t).$$
This completes proof.
\end{proof}

\section{Appendix}

\subsection{Enlargement of filtrations theory}
In this subsection, we recall results of theory of enlargement filtrations which were mainly  useful in the current work.
\begin{defn}[\textbf{Definition 2.1 of Azéma and Yor\cite{1}}]
Let $H$ be a random  optional closed set. We call $\mathcal{R}(H)$ the class of processes $(X_{t};t\geq0)$ vanishing on $H$ and admitting a decomposition of the form
$$X_{t}=M_{t}+V_{t},$$
where $(M_{t};t\geq0)$ is a right continuous uniformly integrable martingale, $(V_{t};t\geq0)$ is a continuous and adapted variation integrable process such that $dV_{t}$ is carried by $H$.
\end{defn}

\begin{prop}[\textbf{Proposition 3.1 of Azéma and Yor\cite{1}}]\label{rho}

Let $H$ be a random  optional closed set. Denote $g=\sup{H}$ and represent by $(\mathcal{G}_{t})_{t\geq0}$, the progressive enlargement of the filtration $(\mathcal{F}_{t})_{t\geq0}$ with respect to $g$. Let $(V_{t})_{t\geq0}$ be a $(\mathcal{G}_{g+t})_{t\geq0}-$   optional process. There exists a unique $(\mathcal{F}_{t})_{t\geq0}-$   optional process $(U_{t})_{t\geq0}$  which vanishes on $H$ such that $\forall t\geq0$, $U_{g+t}=V_{t}$ and $U_{0}=V_{0}$ on $\{g=0\}$. That defines a function
$\rho:V\longmapsto U$. $\rho$ is linear, non-negative and preserves products.
\end{prop}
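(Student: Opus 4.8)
The plan is to treat this as a reconstruction (or ``lifting'') result for the progressive enlargement at the last exit time $g=\sup H$, and to establish it in three stages: uniqueness of the lift, existence by an explicit construction, and the algebraic properties of the resulting map $\rho$. The two structural inputs I would rely on throughout are that $g$ is the end of the optional set $H$ (hence an honest time) and Jeulin's lemma $\mathcal{G}_{g+t}=\mathcal{F}_{g+t}$ for $t>0$ (from \cite{jeulin80}), which identifies the post-$g$ enlarged filtration with the original one and thereby places the shifted process $t\mapsto U_{g+t}$ in the same filtration as $V$.

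For uniqueness I would argue via the optional section theorem. If $U$ and $U'$ are two $\mathcal{F}$-optional processes vanishing on $H$ with $U_{g+t}=U'_{g+t}=V_{t}$ for every $t>0$ (together with the boundary normalisation $U_{0}=U'_{0}=V_{0}$ on $\{g=0\}$), then $D:=U-U'$ is $\mathcal{F}$-optional, vanishes on $H$, and vanishes on $(g,\infty)$; it remains to show $D\equiv0$. By optional section it suffices to check $D_{T}=0$ almost surely for every $\mathcal{F}$-stopping time $T$, and on $\{T\in H\}\cup\{T>g\}$ this is immediate from the two vanishing properties. On the complementary event one brings in the honest-time structure: using the first entry $d_{T}=\inf\{s>T:s\in H\}$, which is an $\mathcal{F}$-stopping time by the debut theorem and satisfies $d_{T}\le g$ and $d_{T}\in H$ on $\{T<g\}$, together with the identity $\mathcal{G}_{g+t}=\mathcal{F}_{g+t}$, one ties the values of $D$ on the pre-$g$ excursions of $H^{c}$ to its (vanishing) post-$g$ values. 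This pre-$g$ step is precisely where the argument is delicate, and I expect it to be the main obstacle: one must rule out any non-trivial $\mathcal{F}$-optional process vanishing on $H$ and supported in $[0,g)\setminus H$, which is exactly the content that makes $\rho$ well defined.

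For existence I would construct a candidate lift explicitly: set $U_{s}:=V_{s-g}$ for $s>g$, which is $\mathcal{F}$-optional on $(g,\infty)$ by Jeulin's lemma; put $U=0$ on $H$; and on the pre-$g$ excursions of $H^{c}$ define $U$ by transporting the post-$g$ prescription through the last-exit decomposition, so that the resulting process is $\mathcal{F}$-optional and vanishes on $H$ by construction, the optionality again resting on $g$ being the end of the optional set $H$. Finally, linearity and non-negativity of $\rho$ are immediate, since the prescription $V\mapsto U$ is applied pointwise in $(t,\omega)$ and respects both the linear structure and the order; that $\rho$ preserves products I would obtain by verifying $\rho(VV')=\rho(V)\rho(V')$ on a generating family of bounded $\mathcal{G}_{g+\cdot}$-optional processes and extending by a functional monotone-class argument, invoking the uniqueness already proved to identify the two sides.
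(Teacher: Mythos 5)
First, a structural point: the paper never proves this proposition. It is recalled verbatim in the appendix as Proposition 3.1 of Az\'ema--Yor \cite{1}, purely as background for Section 3.2, so there is no internal proof to compare yours against; your attempt must be judged on its own. Judged so, it has a genuine gap, and you have located it yourself: the assertion that an $(\mathcal{F}_t)$-optional process $D$ vanishing on $H$ and satisfying $D_{g+t}=0$ for all $t\geq0$ must be evanescent \emph{is} the entire mathematical content of the uniqueness claim, and you leave it unproved (``I expect it to be the main obstacle''). The tools you propose cannot close it: knowing $D_{d_T}=0$ at the first entry $d_T=\inf\{s>T:s\in H\}$ says nothing about $D_T$ for $T$ strictly inside a pre-$g$ excursion of $H^{c}$, and Lemma \ref{j80} identifies $\mathcal{G}_\tau$ with $\mathcal{F}_\tau$ only for stopping times $\tau>g$, i.e.\ it carries information about the post-$g$ half-line only. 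Worse, the step is unprovable at the stated level of generality, because for an arbitrary optional closed set the uniqueness claim is \emph{false}: take $H=\{1\}\cup\{2\}$ deterministic, so $g=2$, and $D_t=1_{(1,2)}(t)$; this $D$ is deterministic (hence optional), vanishes on $H$ and on $[g,\infty)$, and is not evanescent. The proposition holds under the standing hypotheses of \cite{1}, where $H$ is essentially the zero set of a continuous martingale; what this buys is saturation of $H$, namely that the optional set $\{Z=1\}$, with $Z_t=P(g>t\mid\mathcal{F}_t)$ the Az\'ema supermartingale, is contained in $H$ up to evanescence. The correct uniqueness argument runs exactly through this: for a stopping time $T$, the event $\{D_T\neq0\}$ is in $\mathcal{F}_T$ and is contained in $\{T<g\}\setminus\{T\in H\}$; saturation gives $P(g\leq T\mid\mathcal{F}_T)>0$ a.s.\ on $\{T\notin H\}$, so $0=P(D_T\neq0,\,g\leq T)=E\bigl[1_{\{D_T\neq0\}}P(g\leq T\mid\mathcal{F}_T)\bigr]$ forces $P(D_T\neq0)=0$; then conclude by the optional section theorem. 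None of this appears in your argument, and no amount of shuffling $d_T$ and Lemma \ref{j80} will substitute for it.

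Your existence construction is incomplete for a related reason. ``Transporting the post-$g$ prescription through the last-exit decomposition'' is not a construction: the naive candidate $U_t=V_{t-g}1_{\{g<t\}}$ is not $(\mathcal{F}_t)$-adapted (the event $\{g<t\}$ is not in $\mathcal{F}_t$, precisely because $g$ is honest but not a stopping time), and $U$ is \emph{not} required to vanish on $[0,g)\setminus H$ --- its values there are forced by optionality, not chosen (see the quotient theorem, where $\rho(\chi)=X/Y$ is nonzero on pre-$g$ excursions). The clean route is: the shifted process $\tilde V_t=V_{t-g}1_{\{g<t\}}$ is $(\mathcal{G}_t)$-optional; by Jeulin's structure theorem for honest times (every $\mathcal{G}$-optional process coincides on the stochastic interval $(g,\infty)$ with some $\mathcal{F}$-optional process $K$ --- this, not Lemma \ref{j80}, is the input you need), set $U:=K\,1_{H^{c}}$, which is $\mathcal{F}$-optional, vanishes on $H$, and satisfies $U_{g+t}=V_t$ for $t>0$; uniqueness (under saturation) shows $U$ does not depend on the choice of $K$. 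Finally, your monotone-class argument for multiplicativity is unnecessary: once uniqueness is known, all three algebraic properties are immediate --- $\rho(V)\rho(V')$ is optional, vanishes on $H$, and shifts to $VV'$, hence equals $\rho(VV')$; and $\rho(V)\vee0$ is optional, vanishes on $H$, and shifts to $V\vee0=V$ when $V\geq0$, hence equals $\rho(V)$, giving positivity.
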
       

\begin{theorem}\label{qot}[\textbf{Quotient theorem: Theorem 3.2 of Azéma and Yor \cite{1}}]
\begin{enumerate}
	\item If $(X_{t};t\geq0)$ is a stochastic process of the class $\mathcal{R}(H)$, hence, the process $(\chi_{t};t>0)$ defined by 
$$\chi_{t}=\frac{X_{g+t}}{Y_{g+t}}$$
is a $\left(Q, (\mathcal{G}_{g+t}){t>0}\right)$ uniformly integrable martingale.
\item Reciprocally, let $(\chi_{t};t>0)$ a $\left(Q, (\mathcal{G}_{g+t}){t>0}\right)$ uniformly integrable martingale; the stochastic process $X=(Y_{t}\rho(\chi_{\cdot})_{t};t\geq0)$ is the unique process of $\mathcal{R}(H)$ such that
$$\chi_{t}=\frac{X_{g+t}}{Y_{g+t}}$$
for all $t>0$.
\end{enumerate}
\end{theorem}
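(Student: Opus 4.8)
The plan is to reduce both parts to the behaviour of elements of $\mathcal{R}(H)$ strictly after $g=\sup H$, and then to let the correspondence $\rho$ of Proposition \ref{rho} do the algebraic bookkeeping. First I would fix $X=M+V\in\mathcal{R}(H)$, so that $M$ is a uniformly integrable $(\mathcal{F}_t)$-martingale, $V$ is continuous, adapted, of integrable variation with $dV$ carried by $H$, and $X$ vanishes on $H$. Since $g=\sup H$ and $H$ is closed, no mass of $dV$ sits in $(g,\infty)$, so $V$ is frozen after $g$; combined with $X_g=0$ (as $g\in H$) this gives, for $t>0$,
$$X_{g+t}=M_{g+t}+V_g=M_{g+t}-M_g.$$
The same reduction applies verbatim to the fixed reference process $Y$, so that $\chi_t=X_{g+t}/Y_{g+t}$ is $(\mathcal{G}_{g+t})$-measurable and is assembled entirely from post-$g$ increments of $(\mathcal{F}_t)$-martingales.

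For the martingale assertion of part (1) I would invoke the structure of progressive enlargement past the honest time $g$: by Jeulin--Yor, each $(\mathcal{F}_t)$-martingale is a $(\mathcal{G}_t)$-semimartingale whose finite-variation part on $(g,\infty)$ is governed by the Azéma supermartingale $Z_t=P(g>t\,|\,\mathcal{F}_t)$, through a drift of the form $-\int_g^{\,\cdot}\frac{d\langle M,Z\rangle_s}{1-Z_s}$. Thus $X_{g+t}-X_g$ is a genuine $(\mathcal{G}_{g+t})$-semimartingale, and the reference measure $Q$ attached to $Y$ is precisely the $h$-transform that absorbs this post-$g$ drift, turning $X_{g+t}/Y_{g+t}$ into a $(Q,(\mathcal{G}_{g+t}))$-local martingale. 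Uniform integrability then follows from that of $M$ (hence of $X_{g+t}=M_{g+t}-M_g$) together with the positivity and integrability of $Y_{g+t}$. I expect this to be the main obstacle: it is the only place where the genuine enlargement theory, rather than the formal calculus of $\rho$, is needed, and checking that $Q$ exactly cancels the drift is the delicate point.

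For the reciprocal part (2) I would run $\rho$ backwards, leaning on the clause of Proposition \ref{rho} that $\rho$ is linear, non-negative and \emph{preserves products}. Given a uniformly integrable $(Q,(\mathcal{G}_{g+t}))$-martingale $\chi$, set $X:=Y\,\rho(\chi_\cdot)$. Because $Y$ vanishes on $H$, uniqueness in Proposition \ref{rho} gives $Y=\rho(Y_{\cdot+g})$, and product preservation yields
$$X=\rho(\chi_\cdot)\,\rho(Y_{\cdot+g})=\rho\!\left(\chi_\cdot\,Y_{\cdot+g}\right),$$
so $X$ is $(\mathcal{F}_t)$-optional, vanishes on $H$, and satisfies $X_{g+t}=\chi_t\,Y_{g+t}$, i.e. $\chi_t=X_{g+t}/Y_{g+t}$. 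It then remains to confirm $X\in\mathcal{R}(H)$, namely that $X=M+V$ with $dV$ carried by $H$; this is obtained by reversing the computation of part (1), the $Q$-martingale property of $\chi$ translating back into the $(\mathcal{F}_t)$-martingale part $M$ while the $h$-transform weight produces the finite-variation term supported on $H$.

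Finally, uniqueness is immediate from the same correspondence: if $X,X'\in\mathcal{R}(H)$ share the quotient $\chi$, then $X_{g+t}=\chi_t Y_{g+t}=X'_{g+t}$ for all $t>0$, and since both vanish on $H$ the uniqueness clause of Proposition \ref{rho} forces $X=X'$. The argument thus splits into a hard analytic core (removal of the post-$g$ drift under $Q$) and a soft algebraic shell (the product-preserving bijection $\rho$), and only the former requires real work.
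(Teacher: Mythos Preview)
The paper does not contain a proof of this statement: Theorem~\ref{qot} sits in the Appendix as a quoted result from Az\'ema and Yor~\cite{1}, recalled verbatim for later use (in Corollary~\ref{rep1} and Theorem~\ref{rep2}) and not reproved. There is therefore nothing in the present paper to compare your proposal against.

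As a standalone sketch your outline is broadly in the right spirit --- freezing $V$ after $g$, handling the post-$g$ drift via the enlargement formulas, and using the product-preserving map $\rho$ for the converse and uniqueness --- but be aware that several objects in the statement ($Y$, $Q$) are not defined in this paper; they live in~\cite{1}, and your argument silently imports their properties (e.g.\ that $Q$ is the $h$-transform attached to $Y$, that $Y_{g+t}>0$). If you intend to supply a genuine proof you would need to reproduce those definitions and verify the drift-cancellation under $Q$ carefully, which is indeed the substantive step as you note. But for the purposes of this paper no proof is required or given.
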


\begin{lem}[\textbf{Lemma 5.7 of Jeulin \cite{jeulin80}}]\label{j80}
Let $g$ be an honest variable with respect to $(\mathcal{F}_{t})_{t\geq0}$. Let $(\mathcal{G}_{t})_{t\geq0}$ be the progressive enlargement of the filtration $(\mathcal{F}_{t})_{t\geq0}$ with respect to $g$. If $\tau$ is a stopping time with respect to $(\mathcal{G}_{t})_{t\geq0}$ such that $g<\tau$ on $\{g<\infty\}$, hence
$$\mathcal{G}_{\tau}=\mathcal{F}_{\tau}.$$
\end{lem}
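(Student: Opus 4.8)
The plan is to establish the two inclusions $\mathcal{F}_\tau \subseteq \mathcal{G}_\tau$ and $\mathcal{G}_\tau \subseteq \mathcal{F}_\tau$ separately, working throughout with the description of the relevant $\sigma$-fields by optional processes evaluated at $\tau$: for $\mathcal{H} \in \{\mathcal{F}, \mathcal{G}\}$ one has $\mathcal{H}_\tau = \sigma\{Z_\tau : Z \text{ is } \mathcal{H}\text{-optional and bounded}\}$. This formulation remains meaningful for the $\mathcal{G}$-stopping time $\tau$ (which need not be an $\mathcal{F}$-stopping time), and it makes the first inclusion immediate, since every $\mathcal{F}$-optional process is in particular $\mathcal{G}$-optional, whence $\mathcal{F}_\tau \subseteq \mathcal{G}_\tau$. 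All the content therefore lies in the reverse inclusion.

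Next I would assemble three facts about the honest time $g$. First, $g$ is a $\mathcal{G}$-stopping time, by the very construction of the progressive enlargement. Second, honesty of $g$ means precisely that for each fixed $t$ the variable $g\mathbf{1}_{\{g<t\}}$ is $\mathcal{F}_t$-measurable; since the hypothesis gives $g<\tau$ on $\{g<\infty\}$, I would transfer this to the random time $\tau$ and conclude that $\{g<\tau\}\in\mathcal{F}_\tau$ and that $g\mathbf{1}_{\{g<\tau\}}$ is $\mathcal{F}_\tau$-measurable. Third, and decisively, I would invoke the splitting lemma for honest times: every bounded $\mathcal{G}$-optional process $Z$ admits a representation $Z_t = Y_t\mathbf{1}_{\{t\le g\}} + Y'_t\mathbf{1}_{\{t>g\}}$ with $Y$ and $Y'$ both $\mathcal{F}$-optional.

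The main argument is then short. Given a bounded $\mathcal{G}$-optional $Z$, evaluating the splitting at $\tau$ yields $Z_\tau = Y_\tau\mathbf{1}_{\{\tau\le g\}} + Y'_\tau\mathbf{1}_{\{\tau>g\}}$. Because $Y$ and $Y'$ are $\mathcal{F}$-optional, both $Y_\tau$ and $Y'_\tau$ are $\mathcal{F}_\tau$-measurable; because of the second fact, $\{\tau>g\}=\{g<\tau\}$ and its complement both lie in $\mathcal{F}_\tau$. Hence $Z_\tau$ is $\mathcal{F}_\tau$-measurable, and since such variables $Z_\tau$ generate $\mathcal{G}_\tau$, this gives $\mathcal{G}_\tau\subseteq\mathcal{F}_\tau$. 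Combined with the first inclusion, the equality $\mathcal{G}_\tau=\mathcal{F}_\tau$ follows.

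The genuine difficulty is the splitting lemma, which is the substantive input from the theory of honest times: it is exactly what separates honest times from arbitrary random times, and it is where the hypothesis that $g$ be the end of an optional set is really used. A secondary delicate point is the passage from the fixed-$t$ measurability of $g$ to measurability at the random time $\tau$, together with a careful treatment of the boundary $\{t=g\}$ and of the set $\{g=\infty\}$, on which the hypothesis imposes nothing; there one must check that $\tau$ falls into the $\{t\le g\}$ branch and that the evaluation step still delivers $\mathcal{F}_\tau$-measurability. Once these structural facts are secured, the remaining verifications are routine.
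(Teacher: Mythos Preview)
The paper does not prove this lemma at all: it is merely quoted in the Appendix as a known result from Jeulin's monograph, with no argument given. There is therefore nothing in the paper to compare your proposal against.

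For what it is worth, your outline is the standard route to Jeulin's result: invoke the optional splitting $Z_t=Y_t\mathbf 1_{\{t\le g\}}+Y'_t\mathbf 1_{\{t>g\}}$ with $Y,Y'$ $\mathcal F$-optional (this is exactly where honesty enters), evaluate at $\tau$, and use that under the hypothesis the indicator $\mathbf 1_{\{\tau>g\}}$ reduces to $\mathbf 1_{\{g<\infty\}}$. You have correctly identified the two delicate points, namely the definition of $\mathcal F_\tau$ for a time $\tau$ that is a priori only a $\mathcal G$-stopping time, and the handling of $\{g=\infty\}$. One small caution on the latter: without further hypotheses (e.g.\ $g<\infty$ a.s., which does hold in the paper's application to $\gamma=\sup\{t\le 1:B_t=0\}$), showing that $\{g<\infty\}\in\mathcal F_\tau$ is not automatic, and indeed the statement can fail for pathological honest times that are infinite with positive probability; this is where one must lean on the precise definitions used in Jeulin's framework.
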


\subsection{Balayage formula}
The balayage formulas in the progressive case were more used throughout this paper. In what follows, we recall results we used.

\begin{prop}[\textbf{Proposition 2.3 of Ouknine and Bouhadou \cite{siam}} ]
Let $Y$ be a continuous semimartingale and $\gamma_{t}=\sup\{s\leq t:Y_{s}=0\}$. If $k$ is a bounded progressive process, then
$$k_{\gamma_{t}}Y_{t}=k_{0}Y_{0}+{\int_{0}^{t}{^{p}k_{\gamma_{s}}dY_{s}}+R_{t}},$$
where $R$ is a process of bounded variation, adapted, continuous such that $dR_{t}$ is carried by the set $\{Y_{s}=0\}$.
\end{prop}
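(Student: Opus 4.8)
The plan is to decompose $Y = Y_0 + M + A$ into its continuous local-martingale part $M$ and its continuous finite-variation part $A$, and to exploit the pathwise structure of the random time $\gamma_s$, interpreting ${}^p k$ as the predictable projection of $k$. First I would record the elementary facts: $s \mapsto \gamma_s$ is non-decreasing and right-continuous, it equals $s$ on the zero set $\{s: Y_s=0\}$, and on each excursion interval $]g_n,d_n[$ of $Y$ away from $0$ it is frozen at the left endpoint, $\gamma_s = g_n$, so $k_{\gamma_s} = k_{g_n}$ is constant there. Since $Y_{g_n}=Y_{d_n}=0$, every completed excursion contributes $k_{g_n}(Y_{d_n}-Y_{g_n})=0$ to $\int k_{\gamma_s}\,dY_s$. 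I would treat first a bounded \emph{predictable} $k$, for which ${}^p k = k$ and for which $s\mapsto k_{\gamma_s}$ is again predictable (a standard balayage lemma), so the stochastic integral is legitimate; the general progressive statement is recovered at the end.

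The core step is to establish the identity by telescoping over excursions. Splitting $\int_0^t k_{\gamma_s}\,dY_s$ according to $\{Y_s\neq 0\}$ and $\{Y_s=0\}$, the part on $\{Y_s\neq 0\}$ runs over the excursion intervals: all excursions completed before $t$ contribute $0$, the segment straddling $t$ contributes $k_{\gamma_t}Y_t$ (because $Y$ vanishes at its left endpoint), and the initial segment before the first zero contributes $-k_0Y_0$. This gives
\[
\int_0^t k_{\gamma_s}\,dY_s \;=\; k_{\gamma_t}Y_t - k_0Y_0 + \int_0^t k_{\gamma_s}\mathbf{1}_{\{Y_s=0\}}\,dY_s ,
\]
so that setting $R_t := -\int_0^t k_{\gamma_s}\mathbf{1}_{\{Y_s=0\}}\,dY_s$ yields exactly $k_{\gamma_t}Y_t = k_0Y_0 + \int_0^t k_{\gamma_s}\,dY_s + R_t$. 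The countable sum over the densely accumulating excursions is justified by dominated convergence for stochastic integrals, using boundedness of $k$.

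It remains to identify $R$. On $\{Y_s=0\}$ one has $\gamma_s=s$, whence $R_t = -\int_0^t k_s\mathbf{1}_{\{Y_s=0\}}\,dY_s$. The martingale contribution $\int_0^t \mathbf{1}_{\{Y_s=0\}}\,dM_s$ is a local martingale whose quadratic variation is $\int_0^t \mathbf{1}_{\{Y_s=0\}}\,d\langle M\rangle_s = \int_{\R}\mathbf{1}_{\{0\}}(a)L_t^a\,da = 0$ by the occupation-times formula, hence it vanishes identically. Therefore $R_t = -\int_0^t k_s\mathbf{1}_{\{Y_s=0\}}\,dA_s$ is continuous, adapted, of finite variation, and $dR_t$ is carried by $\{Y_s=0\}$, as required.

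Finally, for a merely progressive bounded $k$ I would pass to the predictable projection ${}^p k$. On the open excursion intervals the integrand $k_{\gamma_s}=k_{g_n}$ is already adapted to $\mathcal{F}_{g_n}$ in the sense needed, so projecting alters the integrand only on the zero set; there the martingale part is immaterial by the same occupation-times argument and the finite-variation part is reabsorbed into $R$, producing the stated $\int_0^t {}^p k_{\gamma_s}\,dY_s$. I expect the \textbf{main obstacle} to be precisely this last step: rigorously justifying that the integral of the progressive integrand $k_{\gamma_\cdot}$ coincides with $\int {}^p k_{\gamma_s}\,dY_s$ and that the discrepancy between $k$ and ${}^p k$ is supported on $\{Y_s=0\}$. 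This forces one to control the measurability of the excursion endpoints $g_n$ (which are not stopping times) and to commute the predictable projection with the time substitution $s\mapsto\gamma_s$; the telescoping over the infinitely many excursions is a secondary technical point settled by approximation.
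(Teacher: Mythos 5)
First, a point of reference: the paper itself does not prove this proposition --- it is recalled in the appendix as a quotation from Ouknine and Bouhadou \cite{siam} (it is the progressive balayage formula going back to Meyer, Stricker and Yor, cf.\ \cite{y}) --- so your attempt has to be judged on its own, and it contains a fatal error at the core step. You assert that
\begin{equation*}
\int_0^t k_{\gamma_s}\mathbf{1}_{\{Y_s\neq 0\}}\,dY_s \;=\; k_{\gamma_t}Y_t-k_0Y_0,
\end{equation*}
on the grounds that each completed excursion contributes $k_{g_n}(Y_{d_n}-Y_{g_n})=0$. This pathwise evaluation of a stochastic integral is not legitimate: the left endpoints $g_n$ are honest times, not stopping times, and the integral of $k_{g_n}\mathbf{1}_{]g_n,d_n[}$ against $dY$ does not reduce to the increment $k_{g_n}(Y_{d_n\wedge t}-Y_{g_n\wedge t})$. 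Test the display with $k\equiv 1$ and $Y=|B|$: by Tanaka's formula, $\int_0^t\mathbf{1}_{\{Y_s\neq0\}}\,dY_s=\int_0^t \mathrm{sgn}(B_s)\,dB_s=|B_t|-L_t^0(B)$, while your right-hand side equals $|B_t|$. The discrepancy is exactly $\int_0^t\mathbf{1}_{\{Y_s=0\}}\,dV_s=L_t^0(B)$, the mass that the finite-variation part of $Y$ places on the zero set, which an excursion-by-excursion sum cannot see. Consequently, with your $R_t=-\int_0^t k_s\mathbf{1}_{\{Y_s=0\}}\,dV_s$, your final identity reads $|B_t|=|B_t|-L_t^0(B)$ in this example, i.e.\ $L^0\equiv0$, which is absurd. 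The structural way to see the mistake: for bounded \emph{predictable} $k$ the balayage formula is exact, $k_{\gamma_t}Y_t=k_0Y_0+\int_0^tk_{\gamma_s}\,dY_s$ with no remainder at all (Revuz--Yor, Thm.\ VI.4.2); it is the \emph{full} integral that telescopes, not its restriction to $\{Y\neq0\}$, and any argument producing a nonzero $R$ in the predictable case is wrong. (The one correct ingredient in your step is $\int_0^t\mathbf{1}_{\{Y_s=0\}}\,dM_s=0$ via the occupation-time formula.)

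Two further problems, which concern the step you yourself flagged as the main obstacle --- though you inverted where the difficulty lies. First, the symbol ${}^p k_{\gamma_s}$ in the statement denotes the predictable projection of the \emph{frozen} process $s\mapsto k_{\gamma_s}$, not your reading $({}^pk)_{\gamma_s}$; for non-predictable $k$ these differ, and the discrepancy $({}^pk)_{g_n}-k_{g_n}$ stays frozen along the whole excursion $]g_n,d_n[$, so it is emphatically \emph{not} supported on the zero set, contrary to what your last paragraph claims. Second, the actual architecture of a correct proof is: (i) establish the exact predictable formula by a monotone-class argument starting from $k=h\mathbf{1}_{]u,v]}$ with $h\in\mathcal{F}_u$ bounded, for which $\{s: u<\gamma_s\le v\}=[d_u,d_v[$ where $d_u=\inf\{r>u:Y_r=0\}$ and $d_v=\inf\{r>v:Y_r=0\}$ are genuine stopping times, so telescoping is legitimate and no enumeration of excursions is ever needed; (ii) for progressive $k$, show that ${}^p(k_{\gamma_\cdot})$ coincides with $k_{\gamma_\cdot}$ on $\{Y\neq0\}$ up to evanescence (at a predictable time $T$ with $Y_T\neq0$ one has $\gamma_T<T$ and $k_{\gamma_T}$ is $\mathcal{F}_{T-}$-measurable; then use a section argument), and finally prove that the difference $k_{\gamma_t}Y_t-k_0Y_0-\int_0^t{}^p(k_{\gamma_\cdot})_s\,dY_s$ is of finite variation with measure carried by $\{Y=0\}$. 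This last point is the real content of the proposition --- a continuous process that is constant off a closed set need not have finite variation --- and it is entirely absent from your proposal.
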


\begin{prop}[\textbf{Proposition 2.2 of Ouknine and Bouhadou \cite{siam}}]
Let $Y$ be a continuous semimartingale and $Z^{\alpha}$, the process defined in \eqref{zalpha}. Then,
$$Z^{\alpha}_{t}Y_{t}=\int_{0}^{t}{Z^{\alpha}_{s}dY_{s}}+(2\alpha-1)L_{t}^{0}(Z^{\alpha}Y),$$
where, $L_{t}^{0}(Z^{\alpha}Y)$ is the local time of the semimartingale $Z^{\alpha}Y$.
\end{prop}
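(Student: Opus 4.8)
The plan is to read the identity off the general balayage formula recalled in the preceding proposition, using Tanaka's formula to interpret the answer and letting the coefficient $2\alpha-1$ appear as the common mean $E[\zeta_{n}]$ of the excursion signs. Write $\widetilde{Y}=Z^{\alpha}Y$. First I would record the structural facts coming from the excursion decomposition recalled in Section~1, $\R_{+}\setminus\{Y=0\}=\bigcup_{n}\,]g_{n},d_{n}[$: the process $\widetilde{Y}$ is continuous (it vanishes at every endpoint $g_{n},d_{n}$ together with $Y$ and equals $\zeta_{n}Y$ on $]g_{n},d_{n}[$), it satisfies $|\widetilde{Y}|=|Y|$, and $\{\widetilde{Y}=0\}=\{Y=0\}$. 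Since each $\zeta_{n}$ is independent of $\F_{\infty}$, enlarging $(\F_{t})$ by the signs $(\zeta_{n})$ produces a filtration in which $Y$ remains a semimartingale with unchanged decomposition and $Z^{\alpha}$ is progressively measurable; it is in this filtration that $\int_{0}^{\cdot}Z^{\alpha}_{s}\,dY_{s}$ and $L^{0}(\widetilde{Y})$ are to be understood.

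Next I would apply the balayage formula of the preceding proposition to a bounded progressive process $k$ chosen so that $k_{g_{n}}=\zeta_{n}$ at every left endpoint, so that $k_{\gamma_{t}}=Z^{\alpha}_{t}$ off $\{Y=0\}$ while $k_{\gamma_{t}}Y_{t}=Z^{\alpha}_{t}Y_{t}$ for every $t$ (both sides vanish on $\{Y=0\}$). This gives $Z^{\alpha}_{t}Y_{t}=\int_{0}^{t}{}^{p}(k_{\gamma_{s}})\,dY_{s}+R_{t}$ with $R$ continuous, of finite variation, carried by $\{Y=0\}$. The decisive computation is the predictable projection ${}^{p}(k_{\gamma_{s}})$: on the interior of an excursion the sign $\zeta_{n}$ has already been revealed, so ${}^{p}(k_{\gamma_{s}})=Z^{\alpha}_{s}$; at the instants where a new excursion begins, which are points of $\{Y=0\}$, the upcoming sign is independent of the past, so its predictable projection is $E[\zeta_{n}]=2\alpha-1$. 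Thus ${}^{p}(k_{\gamma_{s}})=Z^{\alpha}_{s}+(2\alpha-1)1_{\{Y_{s}=0\}}$, and collecting the extra finite-variation term carried by $\{Y=0\}$ into the remainder I reach $Z^{\alpha}_{t}Y_{t}=\int_{0}^{t}Z^{\alpha}_{s}\,dY_{s}+\widetilde{R}_{t}$ with $\widetilde{R}$ continuous, of finite variation and carried by $\{Y=0\}$.

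It then remains to prove $\widetilde{R}_{t}=(2\alpha-1)L^{0}_{t}(\widetilde{Y})$. To interpret the answer I would apply Tanaka's formula to $\widetilde{Y}$, namely $|\widetilde{Y}_{t}|=\int_{0}^{t}{\rm sgn}(\widetilde{Y}_{s})\,d\widetilde{Y}_{s}+L^{0}_{t}(\widetilde{Y})$; since ${\rm sgn}(\widetilde{Y}_{s})Z^{\alpha}_{s}={\rm sgn}(Y_{s})$ on the excursions and ${\rm sgn}(\widetilde{Y})$ vanishes on the set carrying $\widetilde{R}$, comparison with Tanaka's formula for $Y$ together with $|\widetilde{Y}|=|Y|$ yields $L^{0}(\widetilde{Y})=L^{0}(Y)$. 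The hard part is exactly this final identification: one must justify the predictable projection of the sign process over the accumulating, uncountable family of excursion starts and show that the finite-variation mass it deposits on $\{Y=0\}$ is precisely $(2\alpha-1)\,dL^{0}_{t}(\widetilde{Y})$ and not some other measure supported on the zero set. This is where the independence and the common mean $2\alpha-1$ of the Bernoulli signs are essential, and it is the only point that goes beyond a routine combination of the balayage and Tanaka formulas.
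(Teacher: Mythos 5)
You should first be aware that the paper contains no proof of this statement: it is recalled in the appendix, verbatim, as Proposition 2.2 of Bouhadou and Ouknine \cite{siam}, so your argument has to stand entirely on its own. It does not, and you have in fact located the gap yourself. Your argument reduces the proposition to the identification $\widetilde{R}_{t}=(2\alpha-1)L^{0}_{t}(Z^{\alpha}Y)$ and then declares this to be ``the hard part \dots the only point that goes beyond a routine combination of the balayage and Tanaka formulas.'' But that identification \emph{is} the proposition. The part you do carry out --- that $Z^{\alpha}_{t}Y_{t}-\int_{0}^{t}Z^{\alpha}_{s}\,dY_{s}$ is a continuous finite-variation process carried by $\{Y=0\}$ --- is exactly what the progressive balayage formula (the other proposition recalled in the same appendix) already yields once one takes $k$ with $k_{\gamma_{t}}=Z^{\alpha}_{t}$; the entire content of the statement is that this remainder equals the local time multiplied by the specific constant $2\alpha-1$. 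A proof that stops before that step has established nothing beyond the recalled balayage formula.

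Moreover, the mechanism you sketch for producing the constant cannot work as stated. You propose ${}^{p}(k_{\gamma_{s}})=Z^{\alpha}_{s}+(2\alpha-1)1_{\{Y_{s}=0\}}$ and suggest that the local-time term arises from integrating $(2\alpha-1)1_{\{Y_{s}=0\}}$ against $dY_{s}$. However, $\int_{0}^{t}1_{\{Y_{s}=0\}}\,dY_{s}$ has vanishing martingale part (by the occupation-time formula, $\int_{0}^{t}1_{\{Y_{s}=0\}}\,d\langle Y\rangle_{s}=0$); when $Y$ is a Brownian motion this integral is identically zero, whereas $L^{0}(Z^{\alpha}Y)$ is certainly not. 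So the local time does not come from that term: it sits inside the unidentified remainder $R$ of the balayage formula, i.e.\ precisely in the object your argument never touches. The pointwise heuristic for the projection is also unreliable: almost every point of $\{Y=0\}$ (with respect to $dL^{0}$) is an accumulation point of zeros rather than an excursion start, and predictable projections are identified through stopping times up to evanescence, not by inspecting individual instants. The proof in \cite{siam} rests on Prokaj's unfolding theorem \cite{prok}, which the present paper cites in Section 1: one flips only the finitely many excursions of length at least $\varepsilon$, for which the decomposition can be computed explicitly, and then lets $\varepsilon\to0$; a martingale law-of-large-numbers argument for the i.i.d.\ signs $\zeta_{n}$ is what makes the factor $2\alpha-1$ multiply the local time in the limit. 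Your outline needs this (or an equivalent) argument inserted at the final step to become a proof; the observation that $|Z^{\alpha}Y|=|Y|$ forces $L^{0}(Z^{\alpha}Y)=L^{0}(Y)$ (with the symmetric local-time convention) is correct but is not the missing piece.
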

{\color{myaqua}

}}


\begin{thebibliography}{10}
{\color{black}

\bibitem{1}
J.~Azéma and M.~Yor.
\newblock Sur les zéros des martingales continues. \newblock {\em Séminaire de probabilités (Strasbourg)}, 26: 248-306, 1992.
\bibitem{2}
J.~Azéma and M.~Yor.
\newblock Une solution simple au problème de Skorokhod. \newblock {\em in: Sém.proba. XIII, in: Lecture Notes in
Mathematics}, 721: 90-115,625-633, 1979.

\bibitem{siam}
S.~Bouhadou and Y.~Ouknine.
\newblock On the time inhomogeneous skew Brownian motion. \newblock {\em Bulletin des Sciences Mathématiques}, vol.137(7): 835-850, 2013.

\bibitem{pat}
P.~Cheridito, A.~Nikeghbali and E.~Platen.
\newblock Processes of class sigma, last passage times, and drawdowns. \newblock {\em SIAM Journal on Financial Mathematics}, 3(1): 280-303, 2012

\bibitem{eomt}
F.~Eyi-Obiang, Y.~Ouknine and O.~Moutsinga, G.~Trutnau.
\newblock Some contributions to the study of stochastic processes of the classes  \texorpdfstring{$\Sigma(H)$}{} and \texorpdfstring{$(\Sigma)$}{}. \newblock {\em Stochastics, 89, 8: 1253-1269, 2017.}

\bibitem{jeulin80}
T.~Jeulin.
\newblock Semi-martingales et grossissement d'une filtration. \newblock {\em Lecture Notes in Mathematics 1980, Springer}

\bibitem{man}
R.~Mansury, M.~Yor.
\newblock Random Times and Enlargements of Filtrations in a Brownian Setting. \newblock {\em Lecture Notes in Mathematics 1873, Springer, 2006, ISBN 3540294074, DOI 10.1007/11415558}
\bibitem{naj}
J.~Najnudel, A.~Nikeghbali.
\newblock A new construction of the $\Sigma$- finite measures associated with sub-martingales of class $(\Sigma)$. \newblock {\em C.R. Math. Acad. Sci. Paris}, 348: 311-316, 2010.
\bibitem{naj1}
J.~Najnudel, A.~Nikeghbali.
\newblock A remarkable sigma-finite measure associated with last passage times and penalisation results. \newblock {\em Contemporary Quantitative Finance, Essays in Honour of Eckhard Platen,Springer}, 77-98, 2010.


\bibitem{naj2}
J.~Najnudel, A.~Nikeghbali.
\newblock On some properties of a universal sigma finite measure associated with a remarkable class of sub-martingales. \newblock {\em Publ. of the Res. Instit. for Math. Sci. (Kyoto University)}, 47(4): 911-936, 2011.

\bibitem{naj3}
J.~Najnudel, A.~Nikeghbali.
\newblock On some universal sigma-finite measures and some extensions of Doob's optional stopping theorem. \newblock {\em Accepted in Stochastic processes and their applications}.

\bibitem{nik}
A.~Nikeghbali.
\newblock A class of remarkable sub-martingales. \newblock {\em Journal of Theoretical Probability}, 4(19): 931-949, 2006.
\bibitem{mult}
A.~Nikeghbali.
\newblock Multiplicative decompositions and frequency of vanishing of non negative sub-martingales. \newblock {\em Journal of Theoretical Probability}, 19(4): 931-949, 2006.

\bibitem{prok}
V.~Prokaj.
\newblock Unfolding the Skorokhod reflection of a semimartingale. \newblock {\em Statistics and probability Letters}, 79:534-536, 2009.

\bibitem{y}
M.~Yor.
\newblock Sur le balayage des semi-martingales continues. \newblock {\em Séminaire de probabilités (Strasbourg)}, 13: 453-471, 1979.
\bibitem{y1}
M.~Yor.
\newblock Les inégalités de sous-martingales, comme conséquences de la relation de domination. \newblock {\em Stochastics}, 3(1): 1-15, 1979.


}

\end{thebibliography}
\end{document}